\makeatletter\@addtoreset{equation}{section}\makeatother
\newtheorem{thm}{Theorem}[section] 
\newtheorem{lem}[thm]{Lemma}  
\newtheorem{cor}[thm]{Corollary} 
\newtheorem{prop}[thm]{Proposition}
\theoremstyle{definition}
\newcommand{\drm}{\mathrm{d}}
\newcommand{\R}{\mathbb{R}}
\newcommand{\cL}{\mathcal{L}}
\newcommand{\be}{\begin{equation}}
\newcommand{\ee}{\end{equation}}
\newcommand{\bigO}{\mathcal{O}}
\begin{document}

\title{The speed of traveling waves in a FKPP-Burgers system}

\author{
Jason J. Bramburger\thanks{Department of Applied Mathematics, University of Washington, Seattle, WA, 98105}
\and
Christopher Henderson\thanks{Department of Mathematics, University of Arizona, Tuscon, AZ, 85721}
}

\date{}
\maketitle

\begin{abstract}
	We consider a coupled reaction-advection-diffusion system based on the Fisher-KPP and Burgers equations.  These equations serve as a one-dimensional version of a model for a reacting fluid in which the arising density differences induce a buoyancy force advecting the fluid.  We study front propagation in this system through the lens of traveling waves solutions.  We are able to show two quite different behaviors depending on whether the coupling constant $\rho$ is large or small.  First, it is proved that there is a threshold $\rho_0$ under which the advection has no effect on the speed of traveling waves (although the advection is nonzero). Second, when $\rho$ is large, wave speeds must be at least $\mathcal{O}(\rho^{1/3})$.  These results together give that there is a transition from pulled to pushed waves as $\rho$ increases.  Because of the complex dynamics involved in this and similar models, this is one of the first precise results in the literature on the effect of the coupling on the traveling wave solution.  We use a mix of ordinary and partial differential equation methods in our analytical treatment, and we supplement this with a numerical treatment featuring newly created methods to understand the behavior of the wave speeds.  Finally, various conjectures and open problems are formulated.
\end{abstract}


\section{Introduction} 

The existence and stability of traveling wave solutions to reaction-diffusion equations of the form
\be\label{e.general_rd}
	T_t + \nabla\cdot\left(u T\right) = \Delta T + f(T)
\ee
has been an area of great activity during the past century~\cite{Fisher,KPP,Xin_book}.  Of particular interest in recent years is the influence of the advection $u$ on the speed of fronts~\cite{BHN,ConstantinKiselevObermanRyzhik, ElSmaily, ElSmailyKirsch, HamelZlatos, KiselevRyzhik, MajdaSouganidis, NolenXin1, NolenXin2, NolenXin3, NolenXin4, NovikovRyzhik, RyzhikZlatos, Zlatos, Zlatos2, Zlatos3}.  Generally one finds that unbiased advection ``stirs up'' a reaction, increasing the speed of traveling waves~\cite{BHN}.  The mechanism for this is complex but intimately connected to issues in homogenization and mixing problems.

However, most previous works focus on settings where the advection is imposed from the outside, that is, $u$ is independent of the evolution of $T$. Often works consider $u$ that is time independent and has rigid structure, such as periodicity or stationary ergodicity.  For many physical systems, the reacting quantity and the advection influence each other, evolving in time together. These coupled systems, which can be thought of as ``reacting'' active scalars~\cite{Kiselev}, are much more difficult to analyze as standard techniques that strongly use the structure of $u$ do not apply.  Two prominent examples are the dynamics of a population of chemotactic bacteria~\cite{KellerSegel} and of the temperature of a fluid undergoing a chemical reaction (flame propagation)~\cite{MalhamXin}.  In this paper, we are interested in the latter setting and our particular focus is on how the strength of the coupling between $u$ and $T$ affects the speed of traveling waves.

The model that we consider is a one-dimensional reactive Burgers equation that serves as a simplified model for the situation in which a fluid undergoes a chemical reaction that increases its temperature and the density differences that arise from the temperature being out of equilibrium induce a buoyancy (gravity) force (cf.~the discussion of~\eqref{e.Boussinesq} below).  Precisely, the equations are given by
\be\label{FKPP_Burgers}
	\begin{split}
		T_t - T_{xx} + (uT)_x &= T(1-T),\\
		u_t - \nu u_{xx} + uu_x &= \rho T(1-T).
	\end{split}
\ee
Here $T(x,t)$ and $u(x,t)$ denote the temperature and velocity of the fluid, respectively, each dependent on space $x\in\R$ and time $t \geq 0$. The quantity $\nu \geq 0$ represents the viscosity of the fluid. The first equation in~\eqref{FKPP_Burgers} is the Fisher-KPP (FKPP) equation~\cite{Fisher,KPP}, which is a prototypical equation used in population dynamics and combustion.  In this setting we use the latter interpretation.  The temperature, $T$,  increases due to combustion and is advected by the solution of the second equation, $u$, which is the Burgers equation with a Boussinesq-type gravity term on the right hand side, given here by $\rho T(1-T)$.  The parameter $\rho$ is a positive constant quantifying the strength of coupling.

The goal of this paper is to prove the existence of traveling wave solutions to~\eqref{FKPP_Burgers} and understand how their behavior depends on the parameters $\rho$ and $\nu$. That is, abusing notation, we seek solutions of the form $T(x,t) = T(x-ct)$ and $u(x,t) = U(x-ct)$ where the profiles satisfy the limits 
\begin{equation}\label{TWLimits}
	(T(-\infty),U(-\infty)) = (1,u_0), \quad (T(\infty),U(\infty)) = (0,0),		
\end{equation}
for some $u_0 \in \R$, where $(1,u_0)$ is a spatially-independent equilibrium solution of~\eqref{FKPP_Burgers}. Throughout, we adopt the nomenclature from the usual Burgers equation that \eqref{FKPP_Burgers} with $\nu = 0$ is {\em inviscid}, while the equation having $\nu > 0$ is {\em viscous}. In particular, our interest is in the relationship between the range of speeds $c$ that traveling wave solutions move at and the coupling constant $\rho$. Our main results show that there is a transition in behavior between small values of $\rho$ where the minimal admissible speed of any traveling wave is $2$ (as in the FKPP case, where $u\equiv 0$) and large values of $\rho$ where the advection ``speeds up'' the minimal speed to be $O(\rho^{1/3})$. This corresponds to a transition between ``pulled'' traveling waves, typical for FKPP, and waves that are ``pushed'' by the advection $u$.  

The meaning of equation~\eqref{FKPP_Burgers} can be understood with reference to two similar models for the same physical setting.  First, it is a one-dimensional version of the class of reactive-Boussinesq models in which $T$ is governed by~\eqref{e.general_rd}, while $u$ satisfies $\nabla \cdot u = 0$ and
\be\label{e.Boussinesq}
	\cL_{\rm fluid}(u) = \rho T \hat z,
\ee
where $\hat z$ is the upward pointing unit vector and $\cL_{\rm fluid}$ is the any operator for a fluid equation. For example, one might have $\cL_\mathrm{fluid}$ being the Navier-Stokes equations 
\be
	\cL_{\rm fluid}(u)
		= u_t + u \cdot \nabla u - \Delta u + \nabla p,
\ee
where $p$ is the pressure term. In this setting, the $\rho T$ is a gravity term and $\rho$ is the Rayleigh number, which is a nondimensional constant roughly representing the strength of gravity with respect to the internal properties of the fluid. The dynamics of solutions of~\eqref{e.general_rd} coupled with \eqref{e.Boussinesq} are known to be complex.  To our knowledge the model was introduced in~\cite{MalhamXin}, with traveling waves being constructed in various settings~\cite{BerestyckiConstantinRyzhik, ConstantinLewickaRyzhik, Lewicka, LewickaMucha, Henderson_Boussinesq, TexierPicardVolpert}, and some general bounds for the Cauchy problem were obtained in~\cite{ConstantinKiselevRyzhik}.  In addition, it is known that there is a destabilizing bifurcation that depends on the parameter $\rho$ so that planar waves (in which $u\equiv 0$) are stable for $\rho$ small but unstable for $\rho$ large~\cite{ConstantinKiselevRyzhik, TexierPicardVolpert, TexierPicardVolpert_bifurcation, VladimirovaRosner}. In these works, however, little is known about the case $\rho\gg 1$; in particular, the affect of $\rho$ on the speed of waves is unknown. Furthermore, to our knowledge, there is no conjectured asymptotics of the speed $c=c(\rho)$ as $\rho\to\infty$.

Closer to our setting is the system considered by Constantin et al.~\cite{CRRV}, given by
\be\label{e.CRRV}
	\begin{split}
		T_t - T_{xx} + u T_x &= f(T),\\
		u_t - uu_x &= \rho T.
	\end{split}
\ee
The authors construct traveling waves and investigate quenching with a particular focus on the structure of solutions. They prove that there are $0 < \rho_1 < \rho_2$ such that the following takes place. When $\rho \in (0,\rho_1]$ solutions decompose into a traveling wave moving to the right and an accelerated shock wave moving leftwards. When $\rho \geq \rho_2$ these traveling waves additionally have a wave fan component. They also show that quenching occurs on any length scale when $\rho$ is sufficiently large. We note that quenching is not possible in our context as we use the choice $f(T) = T(1-T)$ instead of an ignition type nonlinearity as in~\cite{CRRV}. In the present work, we are most interested in the behavior of the traveling wave speed as $\rho$ varies while, in~\cite{CRRV}, the authors are focused instead on the structure of the waves and the phenomenon of quenching; as such, they do not address how the speed of traveling waves is affected by the value of $\rho$.

The two main differences between~\eqref{FKPP_Burgers} and~\eqref{e.CRRV} that we point out are (i) the advective term: $(uT)_x$ in~\eqref{FKPP_Burgers} versus $uT_x$ in~\eqref{e.CRRV}, and (ii) the gravity term: $\rho T(1-T)$ in~\eqref{FKPP_Burgers} versus $\rho T$ in~\eqref{e.CRRV}. The reason for our choice of advective term is that it allows the total internal energy, in the absence of reaction, to be conserved. Explicitly, if $\tilde T(x,t)$ satisfies
\[
	\tilde T_t - \tilde T_{xx} + (u \tilde T)_x = 0,
\]
with initial condition $\tilde T(x,0) \in L^1(\R)$, then $\int \tilde T(x,t) \drm x$ is independent of $t$.  This would not be true if $(u \tilde T)_x$ were replaced by $u \tilde T_x$. (We note that the reactive-Boussinesq equation~\eqref{e.Boussinesq} also conserves energy in the absence of reaction due to the assumption that $\nabla\cdot u = 0$.)  Hence, in~\eqref{FKPP_Burgers} any change to the total internal energy of the system arises through the reaction. Turning to the the gravity term, there are two reasons for our choice of $\rho T(1-T)$. First, in~\cite{CRRV}, it is observed that with the choice $\rho T$, the advection is unbounded, which is not physical.  Second, with this choice, if $T$ is in equilibrium, no advection is induced. This is consistent with the reactive-Boussinesq model~\eqref{e.Boussinesq}.

Our proofs of the existence of traveling wave solutions to \eqref{FKPP_Burgers} rely on converting the problem to finding heteroclinic connections to an associated spatial ordinary differential equation (ODE) in the traveling wave independent variable $\xi = x-ct$. The analysis naturally splits into distinct cases for the inviscid and the viscous equation, with the former leading to a planar ODE and the latter leading to an ODE in three dimensions. This means that although the approach to the inviscid and viscous equations are similar, different arguments are required to prove the existence of traveling wave solutions.  

In the inviscid case, we show that traveling waves, when they exist, must be monotone; however, due to the increased complexity of three dimensional ODE systems, we are unable to establish this in the viscous setting.  In this work, we focus exclusively on traveling waves with monotone profiles $(T,U)$. This decision is not arbitrary, as non-monotone solutions to reaction-diffusion systems with monotone dynamics have been proven to be unstable in similar systems~\cite{Hagan,Volpert}. We note that our system differs from these previous studies since they are restricted to scalar equations or linear advection, whereas system~\eqref{FKPP_Burgers} has two components and the advection is nonlinear. In addition, we present numerical evidence below that the traveling waves that are stable are monotone (see \Cref{fig:Waves,fig:Waves2}). While we believe that non-monotone waves (if they exist) are similarly unstable in~\eqref{FKPP_Burgers}, we leave a proof of this to a follow-up paper that hopes to also detail the stability properties of the monotone fronts proven to exist in this work.

The remainder of this paper is organized as follows. In Section~\ref{sec:Results} we present our main results, split up between the inviscid and viscous cases, along with our numerical findings which complement our analysis. Particularly, we provide results from numerical simulations to observe the transition from pulled to pushed traveling waves and demonstrate how recent numerical bounding techniques due to \cite{BramburgerGoluskin} can be employed to determine the asymptotics of the minimum wave speed in the large $\rho$ parameter regime. Details on this numerical bounding technique can be found in the appendix. The proofs of the main results are given in Sections~\ref{sec:Inviscid} and~\ref{sec:Viscous}, with the former featuring the results in the inviscid case and the latter focussing on the viscous case. The paper then concludes with Section~\ref{sec:Discussion} where a brief discussion of our findings and potential avenues for future work can be found.

\subsubsection*{Acknowledgements}

CH was partially supported by NSF grant DMS-2003110.


\section{Main Results}\label{sec:Results} 

As discussed in the introduction, our investigation of~\eqref{FKPP_Burgers} naturally splits into the inviscid ($\nu = 0$) and viscous ($\nu > 0$) cases. The results are slightly different and therefore broken up in what follows. Particularly, we obtain a more complete understanding of the inviscid case, primarily due to the dimensionality of the resulting spatial ODE used to find traveling wave solutions. The results for the viscous case therefore aim to demonstrate that many of the inviscid case results extend into $\nu > 0$, at least when $\nu$ is sufficiently small.  After stating our main theorems for both cases of \eqref{FKPP_Burgers} we then present our numerical findings that complement our analysis. This section then concludes with a brief discussion of related work which frames our findings within the larger body of research into traveling wave solutions to reaction-diffusion equations of the type \eqref{e.general_rd}.


\subsection*{The inviscid case} 

Our first result establishes the existence and some qualitative properties of traveling wave solutions of \eqref{FKPP_Burgers} in the inviscid case.

\begin{thm}\label{thm:Existence1} 
	Fix $\nu = 0$ in~\eqref{FKPP_Burgers}. There exists $c_*(\rho)$, finite for all $\rho > 0$, such that the system \eqref{FKPP_Burgers} has a unique nonnegative traveling wave solution $(T(x,t),u(x,t)) = (T(x-ct),U(x-ct))$ satisfying the limits \eqref{TWLimits} if and only if $c \geq c_*(\rho)$ and $u_0 = c+\rho -\sqrt{c^2 +\rho^2}$. Furthermore, $T$ and $U$ are monotone decreasing and satisfy the ordering
	\begin{equation}\label{Ordering}
		0 < U(x-ct) < \rho T(x-ct) < \rho
	\end{equation}
	for all $x\in\R$ and $t \geq 0$.
\end{thm}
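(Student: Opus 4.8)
The plan is to recast the traveling-wave problem as a heteroclinic connection for a \emph{planar} ODE and then to analyze that connection by phase-plane methods. Substituting the ansatz $T(x,t)=T(\xi)$, $u(x,t)=U(\xi)$ with $\xi=x-ct$ into \eqref{FKPP_Burgers} with $\nu=0$ yields
\begin{equation}
	-T'' + \big((U-c)T\big)' = T(1-T), \qquad (U-c)U' = \rho\, T(1-T).
\end{equation}
The crucial first step is to eliminate the second derivative: multiplying the first equation by $\rho$ and using $\rho T(1-T)=\tfrac12\big((U-c)^2\big)'$ from the second, I obtain an exact derivative, which integrates (using the limit at $+\infty$ to fix the constant) to
\begin{equation}
	T' = (U-c)T + \frac{c^2-(U-c)^2}{2\rho}.
\end{equation}
Together with $U' = \rho T(1-T)/(U-c)$ this is a planar system whose equilibria are exactly the sought endpoints $(0,0)$ and $(1,u_0)$. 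Requiring $(1,u_0)$ to be an equilibrium forces $u_0^2-2(c+\rho)u_0+2\rho c=0$, whose only root compatible with the ordering $u_0<\rho$ is $u_0=c+\rho-\sqrt{c^2+\rho^2}$; this also gives $0<u_0<c$, so $U-c<0$ along any trajectory with $U\le u_0$ and the right-hand sides stay nonsingular.

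Next I would linearize. At $(0,0)$ the Jacobian has trace $-c$ and determinant $1$, hence eigenvalues $\tfrac12\big(-c\pm\sqrt{c^2-4}\big)$: the origin is a stable node precisely when $c\ge 2$ and a stable spiral when $c<2$. Since a spiral forces $T$ to change sign, nonnegativity already yields the necessary condition $c\ge 2$, so $c_*(\rho)\ge 2$. At $(1,u_0)$ the Jacobian has determinant $\rho/(u_0-c)-1<-1<0$, so this is a saddle with a one-dimensional unstable manifold; computing the unstable eigenvector shows that the branch entering $\{T<1\}$ has both $T$ and $U$ decreasing, consistent with a monotone front.

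The heart of the argument is to show that, for $c$ above a threshold, the unstable manifold of the saddle connects to the node while remaining in the physical region. I would construct a forward-invariant trapping region bounded by $\{T<1\}$, $\{U>0\}$, and the ordering line $\{U<\rho T\}$, checking the sign of the vector field on each boundary piece; note that $U'<0$ whenever $0<T<1$ (because $U-c<0$), which immediately gives monotonicity of $U$. Confirming $0<U<\rho T<\rho$ reduces to verifying it at the two equilibria—where it holds since $0<u_0<\rho$, and since the eigendirection along which trajectories approach the origin (eigenvalue $\lambda_+=\tfrac12(-c+\sqrt{c^2-4})$) has slope $U/T=\rho(c+\lambda_+)/c<\rho$—and then propagating it through the invariant region. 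Inside this region one checks that $T'<0$, giving monotonicity of $T$, and that, the origin being a stable node, any trapped trajectory must converge to it. Defining $c_*(\rho)$ as the infimum of speeds admitting such a connection, a shooting and monotonicity-in-$c$ argument identifies the admissible set as the closed half-line $[c_*,\infty)$, while an explicit trapping triangle for large $c$ gives $c_*(\rho)<\infty$. Uniqueness up to translation then follows because the saddle's unstable manifold is one-dimensional.

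The main obstacle I anticipate is this last heteroclinic/shooting step: ruling out that the connecting trajectory escapes the physical region through $\{U=0\}$ or $\{T=0\}$ before reaching the origin, and proving that the set of admissible $c$ is \emph{exactly} a half-line. The reduction to a planar system and the local linear analysis are routine; the delicate part is the global control of an invariant region compatible with the ordering $0<U<\rho T$, since the lower boundary $\{U=0\}$ is not itself invariant (the flow there points into $\{U<0\}$), so the confinement must come from the interplay of the nullclines rather than from each boundary piece in isolation.
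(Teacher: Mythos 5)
Your setup coincides exactly with the paper's: your first integral $-\rho T' + \rho(U-c)T - \tfrac12(U-c)^2 = -\tfrac12 c^2$ is precisely the paper's conserved quantity $\tfrac12 U^2 - cU + \rho V = 0$ with $V = cT + T' - UT$, and your planar system, the linearizations at both equilibria, and the conclusion $c_*(\rho)\geq 2$ from the spiral at the origin all match Proposition~\ref{prop:Equilibria}. However, the proposal leaves the central step unexecuted, and you say so yourself: your candidate trapping region bounded by $\{T<1\}$, $\{U>0\}$, $\{U<\rho T\}$ fails because the flow exits through $\{U=0\}$ (indeed $\dot U = -\rho T(1-T)/c < 0$ there), and acknowledging that ``the confinement must come from the interplay of the nullclines'' is not a substitute for producing it. The paper's resolution is the specific curve $U = c - \sqrt{c^2 - c\rho T}$, equivalently $\tfrac{U}{2\rho}(2c-U) = \tfrac{c}{2}T$, which passes through the origin (so staying above it protects $U>0$ all the way down), lies inside $\mathcal{B}$ once $c > 4\rho/3$, and is shown in Lemma~\ref{lem:Het} to be a trapping boundary for $c$ sufficiently large; this single construction is what makes $c_*(\rho)<\infty$, and no analogue of it appears in your proposal. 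Likewise, your ``shooting and monotonicity-in-$c$ argument'' for the half-line structure is only named, not given: the paper's Lemma~\ref{lem:MinSpeed} requires a genuine crossing argument comparing trajectories at two speeds, whose key inequality ($F'(c)>0$) itself depends on the ordering Lemma~\ref{lem:Ordering}, plus a separate limiting argument (Corollary~\ref{cor:MinSpeed}) to show the speed $c_*(\rho)$ is attained, i.e.\ that the half-line is closed.

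There is also a logical flaw, not merely an omission, in how you handle the value of $u_0$. The theorem's ``only if'' clause requires ruling out nonnegative waves connecting $(1, c+\rho+\sqrt{c^2+\rho^2})$ to $(0,0)$; you discard this root as ``incompatible with the ordering $u_0<\rho$,'' but the ordering \eqref{Ordering} is a \emph{conclusion} proved for the waves that exist, not a hypothesis you may impose when deciding which equilibria can serve as left states. This is circular. The paper needs a separate argument (Lemma~\ref{lem:big_equilibrium}, proved by a case analysis on the sign of $\dot T$ near $-\infty$ using the auxiliary function $Y=(U-c)^2$) to show that any trajectory leaving the larger equilibrium either makes $T$ negative or fails to converge to the origin. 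Finally, note that in the paper the ordering $U<\rho T$ is not obtained by making $\{U<\rho T\}$ invariant, as you propose, but falls out pointwise from the monotonicity $\dot T<0$ in Lemma~\ref{lem:Ordering} --- a simpler route that avoids checking the vector field on that boundary at all.
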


Roughly, this yields that, similar to the FKPP equation, all traveling waves are monotonic and the set of speeds $c$ for which there is a corresponding traveling wave solution is
a closed half-line $[c_*(\rho),\infty)$ with minimal speed $c_*(\rho)$.  We extend these results with the following characterization of the minimum speed.

\begin{thm}\label{thm:MinSpeed1} 
	The minimal speed $c_*(\rho)$ in Theorem~\ref{thm:Existence1} satisfies the following: 
	\begin{compactenum}[(1)]
		\item $c_*(\rho) \geq 2$ for all $\rho > 0$.
		\item There exists a $\rho_0 \in [1,\frac{16}{3}]$ such that for all $\rho \in (0,\rho_0]$ we have $c_*(\rho) = 2$. 
		\item $\displaystyle
			\left(\frac{3}{2}\right)^{1/3} \leq\liminf_{\rho\to\infty} \frac{c_*(\rho)}{\rho^{1/3}}
				\leq \limsup_{\rho\to\infty} \frac{c_*(\rho)}{\rho^{1/3}}
				\leq \sqrt 3.$	
	\end{compactenum}
\end{thm}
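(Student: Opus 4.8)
The plan is to recast the problem as a heteroclinic connection for a planar ODE and then extract all three claims from a single sharp integral estimate together with a phase-plane trapping construction. Setting $\xi = x-ct$ and $v = c-U$, the inviscid velocity equation becomes $vv' = \rho T(1-T)$; integrating this from $\xi$ to $+\infty$ gives $\int_\xi^\infty T(1-T)\,\mathrm{d}\xi' = (c^2-v^2)/(2\rho)$, and feeding this into the integrated temperature equation yields the exact reduction
\be
	T' = \frac{c^2-v^2}{2\rho} - vT, \qquad v' = \frac{\rho T(1-T)}{v}.
\ee
A wave is then a heteroclinic orbit from the saddle $(1,v_-)$, with $v_- = \sqrt{c^2+\rho^2}-\rho$, to the equilibrium $(0,c)$; both points lie on the $T$-nullcline $v = n(T) := \sqrt{\rho^2T^2+c^2}-\rho T$, and by \Cref{thm:Existence1} the orbit is a decreasing graph $v = V(T)$ with $V(0)=c$, $V(1)=v_-$, lying in $\{0 < v_- \le v \le c\}$ and (since $T'<0$) above the nullcline.

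For part (1) I would linearize at $(0,c)$: the Jacobian has trace $-c$ and determinant $1$, so its eigenvalues solve $\lambda^2 + c\lambda + 1 = 0$. A monotone, nonnegative approach to $T=0$ rules out a spiral and hence forces real eigenvalues, giving $c \ge 2$. This is precisely the FKPP leading-edge condition; the coupling enters only at quadratic order and does not perturb it.

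The heart of the argument is one estimate. Writing the orbit as $V(T)$ and using $V \le c$ (so the subtracted nullcline term is nonnegative),
\be
	V^2\frac{dV}{dT} = -\frac{\rho T(1-T)}{\,T - \frac{c^2-V^2}{2\rho V}\,} \le -\rho(1-T),
\ee
and integrating over $T\in[0,1]$ yields $c^3 - v_-^3 \ge \tfrac32\rho$, hence $c^3 > \tfrac32\rho$. This does double duty: it gives $\liminf_{\rho\to\infty} c_*/\rho^{1/3} \ge (3/2)^{1/3}$ at once, and, since a wave at $c=2$ would force $8 = c^3 > \tfrac32\rho$ and hence $\rho < \tfrac{16}{3}$, it shows $c_*(\rho) > 2$ once $\rho \ge \tfrac{16}{3}$, i.e. $\rho_0 \le \tfrac{16}{3}$.

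The remaining, harder direction is to produce waves, i.e. to bound $c_*$ from above. For part (2) I would establish the connection at $c=2$ for $\rho \le 1$, either by continuing the structurally stable saddle--node connection of the decoupled ($\rho=0$) FKPP system, or by trapping the saddle's unstable manifold between the nullcline and an explicit super-solution $v=h(T)$ with $h(0)=2$; the invariance condition $(dv/dT)_{\mathrm{flow}} \ge h'(T)$ on $v=h$ reduces to an algebraic inequality that I would verify for $\rho \le 1$, giving $\rho_0 \ge 1$ (so that $\rho_0 := \sup\{\rho : c_*\equiv 2 \text{ on }(0,\rho]\}$ lies in $[1,\tfrac{16}{3}]$). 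For the upper bound in part (3) the same strategy with $c = \alpha\rho^{1/3}$ and a power-law barrier modeled on the formal outer profile $V \approx (\tfrac{3\rho}{2})^{1/3}(1-T)^{2/3}$ produces a connection once $\alpha$ exceeds an explicit threshold, which I would record as $\sqrt3$. I expect this last step to be the main obstacle: the orbit spans scales $v \sim \rho^{-1/3}$ at the back and $v \sim \rho^{1/3}$ at the front, and the nullcline term dropped in the lower bound is not negligible in the boundary layer near $T=1$, so controlling that layer rigorously is exactly what keeps the super-solution from reaching the sharp value $(3/2)^{1/3}$ suggested by the matched asymptotics (and the numerics), leaving the gap $[(3/2)^{1/3},\sqrt3]$ open. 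A secondary check is that the unstable manifold genuinely enters the trapping region at the corner $(1,v_-)$.
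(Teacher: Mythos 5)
Your reduction, your part (1), and your central integral estimate are all correct and essentially coincide with the paper's own argument: the substitution $v=c-U$ turns the paper's planar system \eqref{Planar} into your system, your linearization at $(0,c)$ (trace $-c$, determinant $1$, so $\lambda^2+c\lambda+1=0$) is Proposition~\ref{prop:Equilibria}(1), and your graph-form inequality $V^2\,\frac{dV}{dT}\le-\rho(1-T)$, integrated over $T\in[0,1]$ to give $c^3-v_-^3\ge\tfrac32\rho$, is exactly the content of Lemma~\ref{lem:LargeRho2}, where the same estimate is obtained by integrating $\tfrac12\le\int(c-U)T(1-T)\,\drm\xi=\tfrac1{3\rho}\bigl(c^3-(c-u_0)^3\bigr)$ in $\xi$. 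Your deductions of the $\liminf$ bound and of $\rho_0\le\tfrac{16}{3}$ from that single estimate are the same as the paper's.

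The genuine gap is in the two existence (upper-bound) halves, which you describe but do not execute. For $\rho_0\ge 1$, your barrier option is the right one---it is precisely Lemma~\ref{lem:SmallRho}, whose barrier is $U=2-\sqrt{4-2\rho T}$ (in your variables $h(T)=\sqrt{4-2\rho T}$, with $h(0)=2$), and whose invariance check reduces to $T(T-U)>0$, closed by the ordering $U\le\rho T$ of Lemma~\ref{lem:Ordering} (valid for any orbit while it stays in $\mathcal{B}$, not only for known waves). You exhibit neither the barrier nor the algebra, and you never invoke the ordering, which is exactly what makes the quantified threshold $\rho\le 1$ come out; your alternative of continuing the $\rho=0$ FKPP connection could only yield an unquantified ``$\rho$ small.'' The more serious problem is the $\sqrt3$ bound in part (3). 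A barrier ``modeled on $v\approx(3\rho/2)^{1/3}(1-T)^{2/3}$'' cannot close: an upper barrier must pass through the orbit's terminal point $(0,c)$ and lie above its initial point $(1,v_-)$, but the stated power law sits at $(3\rho/2)^{1/3}<c=\alpha\rho^{1/3}$ at $T=0$ (so the orbit must cross it), while the rescaled version $c(1-T)^{2/3}$ vanishes at $T=1$, below $v_-$---the boundary-layer failure you yourself flag. The paper's Lemma~\ref{lem:LargeRho} resolves this with the square-root curve $U=\sigma\rho^{1/3}-\rho^{1/3}\sqrt{\sigma^2-2T}$ (i.e.\ $v=\rho^{1/3}\sqrt{\sigma^2-2T}$), which matches both endpoints, and the invariance inequality $\rho^{-2/3}-\sqrt{\sigma^2-2T}+(1-T)\le 0$ is precisely where the threshold $\sigma>\sqrt3$ is generated. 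In your write-up the value $\sqrt3$ is asserted (``which I would record as $\sqrt3$'') rather than derived, so the upper bound in part (3), and with it the full interval $[1,\tfrac{16}{3}]$ in part (2), remains unproved.
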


We note that the $\mathcal{O}(\rho^{1/3})$ scaling obtained in \Cref{thm:MinSpeed1}(3) is not obvious.  In fact, a na\"ive argument might go as follows.  Since $U \approx u_0$ on the left, we might expect
\be
	c = 2 + u_0.
\ee
Indeed, if $U \equiv u_0$ then this equality would hold. Substituting in for $u_0$, considering $\rho \gg 1$, and using a Taylor expansion, yields
\be
	c = 2 + c - \frac{c^2}{2\rho} + \mathcal{O}\bigg(\frac{c^4}{\rho^3}\bigg).
\ee
From the leading order terms we conclude that $c \approx 2 \rho^{1/2}$, which, in view of \Cref{thm:MinSpeed1}, is not accurate at all.

The proofs of Theorem~\ref{thm:Existence1} and \ref{thm:MinSpeed1} are left to Section~\ref{sec:Inviscid} where we show that the problem of finding traveling wave solutions to \eqref{FKPP_Burgers} is equivalent to determining the existence of heteroclinic orbits in a planar ODE. The main step in the construction of traveling waves and in obtaining upper bounds on the minimum wave speed is in finding ``trapping regions'' of the ODE system.  On the other hand, the lower bound on the minimum wave speed (when $\rho \gg 1$) is obtained through the analysis of several integral quantities.  These arguments are reminiscent of partial differential equations (PDE) arguments based on analyzing the ``bulk-burning rate'' developed in~\cite{ConstantinKiselevObermanRyzhik}, though we are able to obtain more precise bounds in the present setting due to the particular structure of~\eqref{FKPP_Burgers}.

From a PDE point of view, one of the major difficulties associated to~\eqref{FKPP_Burgers} is the lack of a general comparison principle.   As a result, standard techniques based on the construction of sub- and super-solutions do not apply.  In particular, one cannot use $(1,u_0)$ as a super-solution to immediately obtain {\em a priori} bounds on $(T,U)$.  While it might be possible to construct more complicated sub- and super-solutions, doing so would require a precise understanding of the profiles of $T$ and $U$, which appears very difficult.  On the other hand, the construction of trapping regions has the advantage that one need only understand the relationship of $T$ and $U$ to each other.  Stated another way, to construct trapping regions, we need only understand the phase plane on the boundary of a set as opposed to on the entirety of the set.  Hence, we are able to obtain bounds via trapping regions despite knowing very little about the shape of $T$ and $U$.

As we discuss below, numerically we see that $c_*(\rho)$ appears to be monotonic in $\rho$, indicating that there exists a critical value $\rho_c>0$ such that $c_*(\rho) = 2$ if $\rho < \rho_c$ and $c_*(\rho) > 2$ if $\rho > \rho_c$. We are unable to establish this here and leave it as a conjecture for future investigation. Furthermore, the upper bound of $16/3$ on $\rho_0$ comes from Lemma~\ref{lem:LargeRho2} below where we give a lower bound on the minimum wave speed by ruling out the existence of said heteroclinic orbits. This lemma not only provides the upper bound on $\rho_0$, but shows that $c_*(\rho) \gtrsim \rho^{1/3}$, partially contributing to the result Theorem~\ref{thm:MinSpeed1}(3). Interestingly, the numerical findings below have led us to believe that the lower bound provided by Lemma~\ref{lem:LargeRho2} is asymptotically sharp in $\rho$. Attempts to prove this analytically remain elusive and would require one to bring down the upper bound on $c_*(\rho)$ provided in Lemma~\ref{lem:LargeRho}. The reader is directed to our discussion of the numerical findings below for more details on this.

\subsection*{The viscous case}

When $\nu>0$, many of the results above can be recovered, albeit with more difficulty.  In particular, we obtain the existence of traveling waves for a half-infinite set of speeds, although possibly not a line. We show that, for large $\rho$, all waves must have speed at least $\mathcal{O}(\rho^{1/3})$, though we are only able to construct waves for speeds like $\mathcal{O}(\rho^{1/2})$.  Finally, we establish a range of $\nu$ and $\rho$ for which the set of admissible speeds is $[2,\infty)$. In the small $\nu > 0$ parameter regime, our results indicate a regular perturbation of the inviscid case, although regularity of the wave speed has not been confirmed here. We present these results in the following theorem whose proof is left to Section~\ref{sec:Viscous}. 

\begin{thm}\label{thm:Viscous} 
	For every $\nu > 0$, there exists $\underline c_*(\nu,\rho)$ and $\overline c_*(\nu,\rho)$, positive and finite for all $\rho > 0$, such that the system \eqref{FKPP_Burgers} has a unique nonnegative monotone traveling wave solution $(T(x,t),u(x,t))=(T(x-ct),U(x-ct))$ satisfying the limits \eqref{TWLimits} if $c \geq \overline c_*(\nu,\rho)$ and only if $c \geq \underline c_*(\nu,\rho)$, with $u_0 = c + \rho - \sqrt{c^2 + \rho^2}$.
	
	The functions $\overline c_*(\nu,\rho)$, $\underline c_*(\nu,\rho)$ satisfy the following:
	\begin{compactenum}[(1)]
		\item For all $\nu,\rho > 0$ we have $2 \leq \underline c_*(\nu,\rho) \leq \overline c_*(\nu,\rho) < \infty$.
		\item For all $\nu > 0$ we have $\overline c_*(\nu,\rho) \to 2$ as $\rho \to 0^+$, and consequently $\underline c_*(\nu,\rho) \to 2$ as well.  
		\item There exists $\nu_* \geq 2$ so that for all $\nu \in (0,\nu_*)$ there exists $\rho_\nu \in (0,8]$ such that $\overline c_*(\nu,\rho) = \underline c_*(\nu,\rho) = 2$ for all $\rho \in (0,\rho_\nu]$. 
		\item For all $\nu > 0$ we have
			$\displaystyle \liminf_{\rho\to\infty} \frac{\underline c_*(\nu,\rho)}{\rho^{1/3}} \geq 1$ and $\displaystyle \limsup_{\rho\to\infty} \frac{\overline c_*(\nu,\rho)}{\rho^{1/2}}
				\leq 2$.
	\end{compactenum}
\end{thm}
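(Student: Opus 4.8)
The plan is to recast the existence of a monotone traveling wave as a heteroclinic connection problem for a three-dimensional ODE, paralleling the inviscid analysis but with one extra dimension from the viscosity. Writing $\xi = x-ct$ and $' = \drm/\drm\xi$, the profiles solve
\begin{align*}
	-T'' - cT' + (UT)' &= T(1-T),\\
	-\nu U'' - cU' + UU' &= \rho T(1-T).
\end{align*}
The key reduction is that $\rho$ times the first equation minus the second has vanishing right-hand side and integrable left-hand side; integrating from $\xi$ to $+\infty$ and using~\eqref{TWLimits} gives the first integral
\be\label{e.firstint}
	-\nu U' - cU + \tfrac12 U^2 + \rho T' + \rho c T - \rho U T = 0.
\ee
Solving~\eqref{e.firstint} for $U'$ and setting $S = T'$ produces a first-order system in $(T,S,U)$ with equilibria $(1,0,u_0)$ and $(0,0,0)$; requiring the former to be an equilibrium forces $u_0 = c+\rho-\sqrt{c^2+\rho^2}$, as claimed. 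A nonnegative monotone wave is then a heteroclinic orbit from $(1,0,u_0)$ to the origin lying in $\{T\geq 0,\ S\leq 0\}$.

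Near the origin the $(T,S)$-equations decouple from $U$ at linear order, so the Jacobian is block triangular with eigenvalues $\tfrac12(-c\pm\sqrt{c^2-4})$ and $-c/\nu$. For $c\geq 2$ the origin is a stable node, whereas for $c<2$ the $(T,S)$-block has complex eigenvalues, forcing $T$ to oscillate and change sign near the origin; this violates nonnegativity and yields $\underline c_*\geq 2$. At $(1,0,u_0)$ I would compute the Jacobian and exhibit a one-dimensional unstable manifold, which supplies the orbit to be continued and gives uniqueness up to translation. Existence for $c\geq\overline c_*$, and hence $\overline c_*<\infty$, I would obtain by constructing a bounded, forward-invariant trapping region enforcing $0\leq T\leq 1$, $S\leq 0$, and the ordering $0<U<\rho T$, with boundary faces on which the vector field points inward away from the equilibria: the unstable manifold of $(1,0,u_0)$ enters this region, and the origin is its only admissible $\omega$-limit.

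Tracking the explicit parameter dependence of the inward-pointing inequalities yields the quantitative statements. As $\rho\to 0^+$ the system degenerates to decoupled FKPP-type equations, whose trapping region closes for every $c>2$, giving $\overline c_*\to 2$ and part~(2); for $\nu,\rho$ small the region can be closed at $c=2$ itself, giving part~(3) with the thresholds $\nu_*\geq 2$ and $\rho_\nu\leq 8$ read off from the closure conditions. A cruder region built around the approximation $U\approx u_0$ (the heuristic behind $c\approx 2\sqrt\rho$) stays invariant once $c\geq 2\sqrt\rho\,(1+o(1))$, giving the upper bound $\limsup_{\rho\to\infty}\overline c_*/\rho^{1/2}\leq 2$. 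The lower bound $\liminf_{\rho\to\infty}\underline c_*/\rho^{1/3}\geq 1$ I would derive from integral identities, as in Lemma~\ref{lem:LargeRho2}: integrating the first profile equation over $\R$ gives $\int_\R T(1-T)\,\drm\xi = c-u_0 = \sqrt{c^2+\rho^2}-\rho \sim \tfrac{c^2}{2\rho}$ for large $\rho$, while solving~\eqref{e.firstint} for $T'$ and using $0<U<\rho T$ together with $U\leq u_0\ll\rho$ bounds $|T'|\leq cT + \nu|U'|/\rho$. Since $\int_\R|U'|=u_0$ and $\nu/\rho\to 0$, the viscous correction is negligible and the change of variables $\int_\R T(1-T)\,\drm\xi=\int_0^1 \tfrac{T(1-T)}{|T'|}\,\drm T\geq \tfrac{1}{2c}(1+o(1))$ combines with the identity to force $c^3\geq\rho(1+o(1))$.

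The main obstacle, I expect, is verifying invariance of the trapping region in three dimensions. Unlike the planar inviscid case, there is no Poincar\'e--Bendixson dichotomy to exclude complicated recurrent behavior, and~\eqref{e.firstint} feeds $U'$ back into the $(T,S)$ dynamics, so checking that the vector field points inward on every face requires controlling all three variables simultaneously. This same difficulty is what obstructs deriving monotonicity (which must instead be imposed) and closing the gap between the $\rho^{1/2}$ upper bound and the $\rho^{1/3}$ lower bound.
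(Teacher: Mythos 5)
Your reduction is exactly the paper's: your first integral is the conserved quantity $\rho V - W = 0$ of \eqref{ODE2}, and your $(T,S,U)$ system is the paper's three-dimensional system \eqref{ODE3D} written in the coordinates $V = S + (c-U)T$; the linearizations, the bound $\underline c_*\geq 2$, the identity $\int T(1-T)\,\drm\xi = c-u_0$, and the overall plan (trapping regions for existence, integral identities for nonexistence) all coincide with the paper's. The genuine gap is the trapping region itself, which is the entire technical content of the existence half. The region $\{0\leq T\leq 1,\ S\leq 0,\ 0< U< \rho T\}$ is provably \emph{not} forward invariant: (a) nothing bounds $S$ from below in terms of $T$, so orbits can reach $T=0$ with $S<0$ and exit into $T<0$, which is precisely the failure mode that must be excluded; (b) on the face $S=0$ one has $S' = -T(1-T)+TU'$ with $\nu U' = -cU+\tfrac12U^2+\rho(c-U)T$, which is large and positive whenever $U\ll \rho T$ and $\rho\gg 1$, so the field exits through $S=0$; (c) on the face $U=\rho T$ one computes $\tfrac{\drm}{\drm\xi}(\rho T-U) = \rho\bigl[(1-\tfrac1\nu)S+\tfrac{\rho T^2}{2\nu}\bigr]$, which is negative for $\nu>1$ once $S$ is sufficiently negative (and note the ordering $U<\rho T$ is only proved in the paper for the \emph{inviscid} case). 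The missing idea --- the crux of \Cref{lem:Het2}, \Cref{lem:SmallRho3} and \Cref{lem:LargeRho3} --- is to impose a lower bound on $V = S+(c-U)T$ proportional to $T$: the paper works inside the cube $\mathcal{C}=\{0\leq T\leq 1,\ 0\leq U\leq u_0,\ 0\leq V\leq c-u_0\}$, shows exit is only possible through $V=0$, and then adds the face $V\geq \tfrac{c}{2}T$ (or $V\geq\tfrac{\sigma^2}{4}T$ with $c=\sigma\rho^{1/2}$ for the $\rho^{1/2}$ bound, or the extra face $\nu U\leq\sqrt{\rho}\,T$ for the $c=2$ threshold in part (3)). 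Invariance of that face reduces to a scalar inequality precisely because $\dot V = T(T-1)$ is so simple; this is why the paper trades $S$ for $V$. Since parts (1)--(3) and the $\rho^{1/2}$ bound in (4) all rest on these regions, they are unsupported in your proposal as written.

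Two smaller gaps. First, your claim that $c<2$ forces $T$ to oscillate and change sign must also rule out the orbit approaching the origin tangentially to the third, non-oscillatory eigendirection $-c/\nu$; the paper does this in a footnote by noting the corresponding eigenvector is the pure $U$-direction, whose stable manifold lies in the invariant set $\{T=V=0\}$ and hence cannot connect to $(1,0,u_0)$. Second, in the nonexistence argument your bound $|T'|\leq cT+\nu|U'|/\rho$ is correct, but the step ``$\int|U'|\,\drm\xi = u_0$ and $\nu/\rho\to0$, so the correction is negligible'' is not a proof: the change of variables $\int_0^1 T(1-T)/|T'|\,\drm T$ requires pointwise, not integral, control of $|U'|$, and $\nu|U'|/\rho$ can dominate $cT$ where $T$ is small. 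The clean fix is the paper's characterization (\Cref{lem:TW_char}) that nonnegative monotone waves have $V\geq 0$, i.e. $|T'|\leq (c-U)T\leq cT$ with no viscous correction at all; with that, your computation yields $\tfrac1{2c}\leq c-u_0 = \tfrac{c^2}{2\rho}+\bigO(\rho^{-5/3})$ and hence $c^3\geq \rho(1+o(1))$, which is exactly \Cref{lem:LargeRho4}.
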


Despite the results for the inviscid and viscous cases being similar, we note that the analysis is fundamentally different. That is, the spatial ODEs used to prove the existence of traveling waves have different dimensions and therefore require different analytical treatments. For this reason our results in the viscous setting are weaker than in the inviscid case. 
The most notable differences are the upper bound on $\overline c_*(\nu,\rho)$ being $\mathcal{O}(\rho^{1/2})$ and the fact that we can only establish $\underline c_*(\nu,\rho) = \overline c_*(\nu,\rho)$ when $\nu$ and $\rho$ are small, so that we do not know if the set of admissible speeds make up a half-line. The former issue is technical and discussed in greater detail in \Cref{sec:Viscous}. We note that the latter issue is related to the fact that two initially ordered curves must cross in 2D in order to ``flip'' their order, while in 3D they can ``wind'' around each other without crossing (see \Cref{lem:MinSpeed} for this crossing argument in 2D).

Before discussing our numerical findings, we note the following subtlety in the viscous case. In addition to the steady-state $(1, c + \rho - \sqrt{c^2 + \rho^2})$ which forms the backwards limit of traveling waves in Theorem~\ref{thm:Viscous}, there is a second steady state $(1, c + \rho + \sqrt{c^2 + \rho^2})$ which could also serve as an asymptotic rest state for traveling wave solutions of \eqref{FKPP_Burgers}.  Due to the particular dynamics presented in Section~\ref{sec:Inviscid}, traveling waves connecting this second steady state to $(0,0)$ can be proven to not exist in the inviscid setting (see \Cref{lem:big_equilibrium}); however, it is not clear that these can be ruled out in the viscous case.  Analytically, it is clear that, were these to exist, they would not be monotonic and, as discussed in the introduction, non-monotone solutions to reaction-diffusion systems with monotone dynamics have been shown to be unstable in similar systems \cite{Hagan,Volpert}. In addition, the numerics (discussed below) seem to indicate the stability of the monotone wave connecting $(1, c + \rho - \sqrt{c^2 + \rho^2})$ and $(0,0)$.  For these reasons, we restrict our attention to these monotone waves in this work. As an aside, it is worth noting that the results on the instability of non-monotone traveling waves in~\cite{Hagan,Volpert} do not directly translate onto our system since they only consider scalar equations and/or linear advection, but could be used to inform a proper analytical undertaking into the wave stability. In addition, it is possible that a bifurcation occurs at some critical $\rho$ changing the stability of the monotone wave, as happens in the Boussinesq model discussed above~\cite{ConstantinKiselevRyzhik, TexierPicardVolpert, TexierPicardVolpert_bifurcation, VladimirovaRosner}.  We leave an investigation of the stability of the waves constructed here and the issues mentioned above to a follow-up work.

\subsection*{Numerical findings}

We now discuss our numerical findings that complement the analysis of system~\eqref{FKPP_Burgers}. To better understand the behaviour of $\rho_\nu$ as $\nu \geq 0$ is varied, we have simulated the full PDE~\eqref{FKPP_Burgers} and determined the asymptotic spreading speed of traveling wave solutions. To do this we have initialized both $T$ and $u$ as Heaviside functions on a spatial domain of $-20\leq x \leq 100$ to allow the wave to travel sufficiently far to the right to observe the asymptotic speed. We set $T = U = 0$ at the right boundary, while setting $T=1$ at the left and using Neumann boundary conditions in $u$. The reason for the discrepancy of boundary conditions at the left is that Theorem~\ref{thm:Existence1} dictates that the waves asymptotically connect to $u_0$ as $x-ct \to -\infty$, which can only be determined upon knowing the exact spreading speed.

The resulting speeds are plotted in Figure~\ref{fig:Speeds} for $\nu = 0,0.1,1,10$, and Figures~\ref{fig:Waves} and~\ref{fig:Waves2} provide profiles from the temporal evolution with $\nu = \rho = 1$. We comment on the fact that we do not have a result that guarantees that such initial conditions asymptotically spread at the minimum wave speed and so the plots in Figure~\ref{fig:Speeds} are only meant as heuristics.

There are a number of conjectures that we can make using these numerics.  First and most importantly, from \Cref{fig:Waves,fig:Waves2}, it appears that the monotone waves connecting $(1,c + \rho - \sqrt{c^2 + \rho^2})$ to $(0,0)$ are stable, at least in the small $\rho$ parameter regime.  Second, we expect that $\nu_*$ in Theorem~\ref{thm:Viscous} is at least 10 and that $\rho_\nu$ monotonically decreases in $\nu > 0$. What is unknown is whether or not $\rho_\nu$ limits to 0 at either a finite value of $\nu > 0$ or as $\nu \to \infty$.  We also observe that the bounds $\rho_0 \leq \frac{16}{3}$ and $\rho_\nu \leq 8$ appear to be overly conservative since $\rho_\nu \leq \rho_0 \approx 2.5$ in Figure~\ref{fig:Speeds}. Finally, we expect that $c_*(\rho)$, $\underline c_*(\nu,\rho)$, and $\overline c_*(\nu,\rho)$ are increasing in both $\nu$ and $\rho$, although a proof of this is notably absent from the results stated above.

\begin{figure} 
\center
\includegraphics[width = 0.49\textwidth]{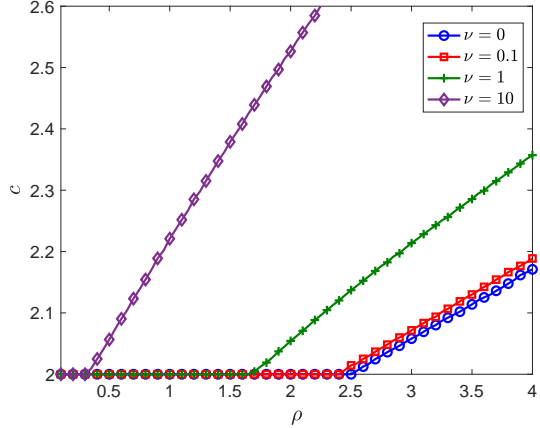} 
\caption{Asymptotic spreading speeds, $c$, observed in numerical integrations of system~\eqref{FKPP_Burgers} for various $\nu \geq 0$.  Notice the clear appearance of threshold $\rho_v$ below which the minimals speed is $2$ and the apparent monotonicity of the spreading speed in both $\nu$ and $\rho$.}
\label{fig:Speeds}
\end{figure} 

\begin{figure} 
\center
\includegraphics[width = 0.49\textwidth]{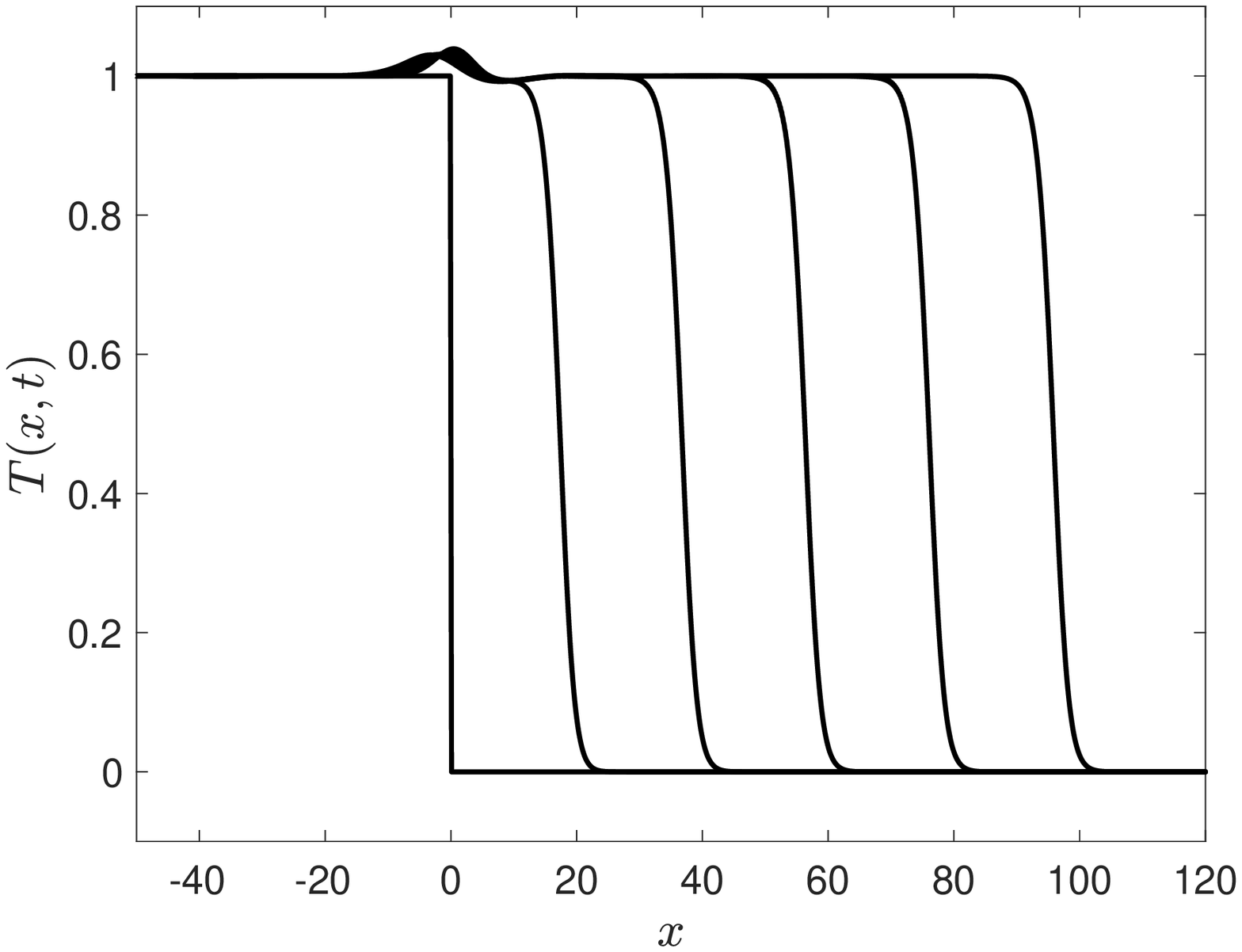}\ \includegraphics[width = 0.49\textwidth]{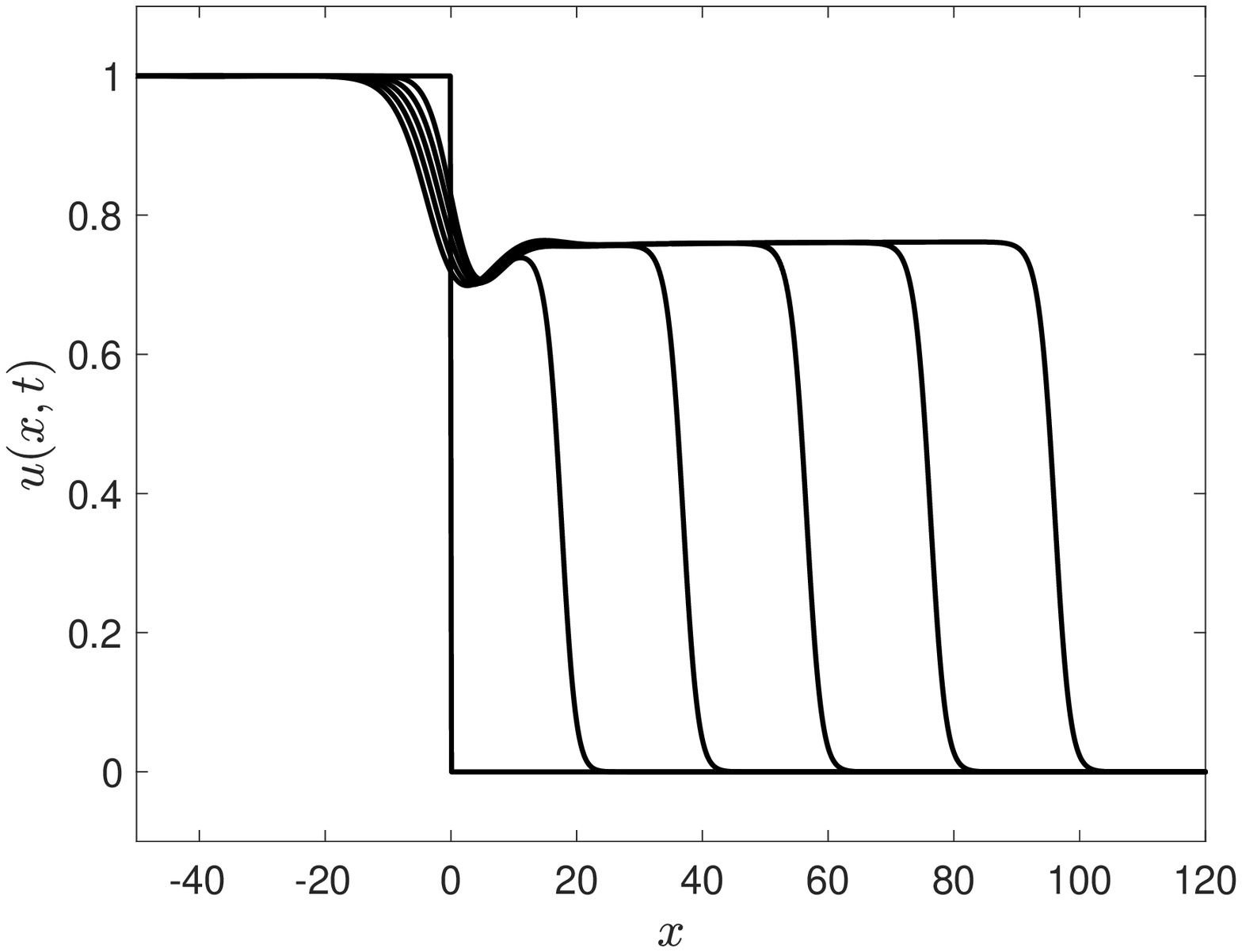}
\caption{Temporal evolution of system~\eqref{FKPP_Burgers} at $\nu = \rho = 1$ with Heaviside initial conditions. Profile snapshots are provided at $t = 0,10,20,30,40,50$ with the waves propagating to the right with asymptotic speed $2$. With these values we have $u_0 \approx 0.76393$, represented at the elongating middle plateau of the $u(x,t)$ component.  It is important to note here that the steady state selected at the back of the emerging wave in $u(x,t)$ corresponds to $u_0 = c + \rho - \sqrt{c^2 + \rho^2}$ instead of $c + \rho + \sqrt{c^2 + \rho^2}$.}
\label{fig:Waves}
\end{figure} 

\begin{figure} 
\center
\includegraphics[width = 0.49\textwidth]{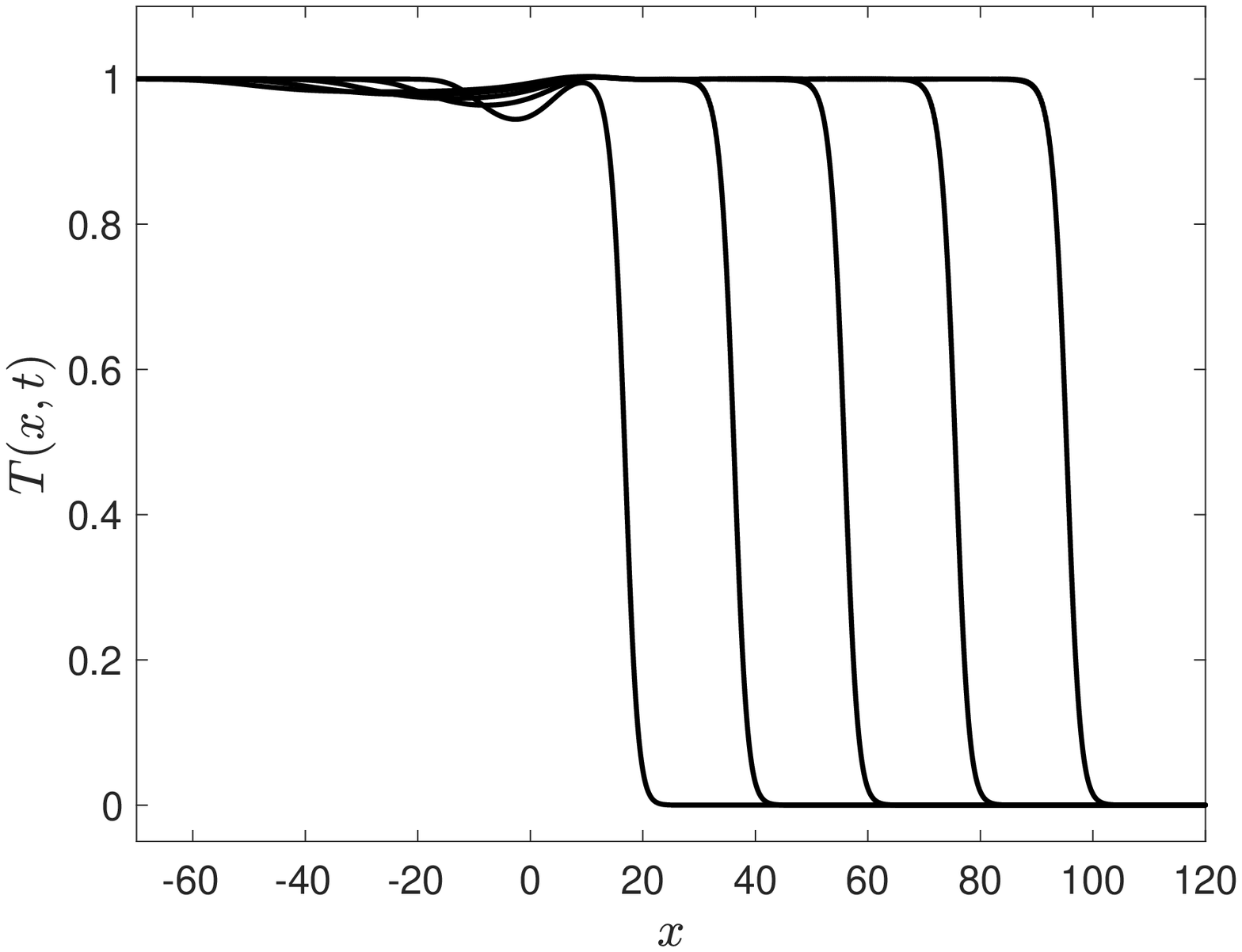}\ \includegraphics[width = 0.49\textwidth]{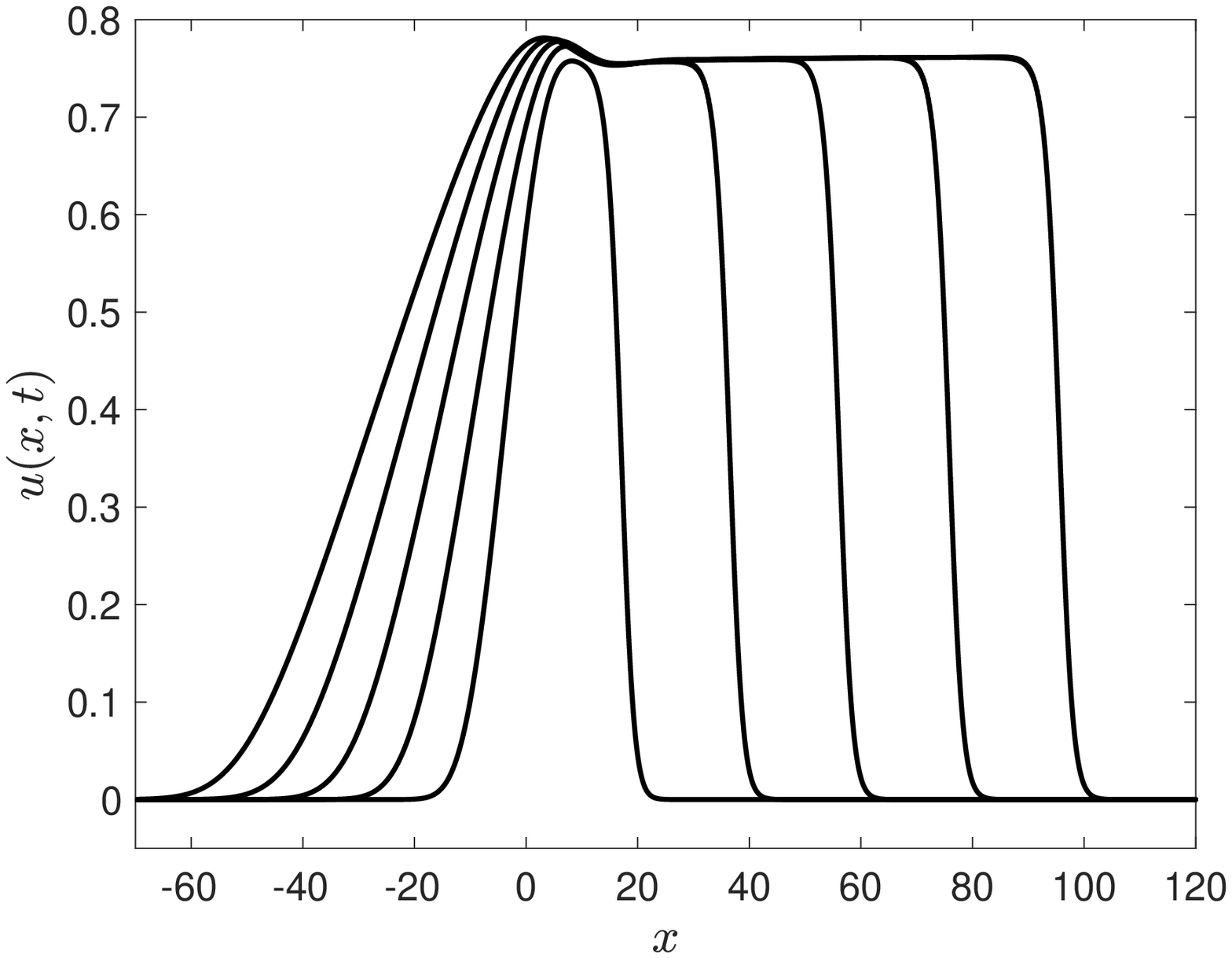}
\caption{Temporal evolution of system~\eqref{FKPP_Burgers} at $\nu = \rho = 1$. $T$ is initialized with a Heaviside initial function while $u$ is initialized to be identically zero. Solutions are again plotted at $t = 10,20,30,40,50$.}
\label{fig:Waves2}
\end{figure} 

In the case when $\rho > 0$ is large, our analytical results dictate that the traveling waves propagate at speeds that are at least proportional to $\rho^{1/3}$. Therefore wave speeds become large when $\rho$ is large and simulating the spreading phenomenon numerically would require significantly larger domains to allow sufficient space for the wave speed to asymptote. Instead of taking such a potentially computationally expensive task, we turn to using the numerical bounding procedures from \cite{BramburgerGoluskin}. In the appendix, we provide detailed information about how the wave speed bounding procedure is applied to~\eqref{FKPP_Burgers}, while here we only discuss the results. In Table~\ref{table1}, we present these upper and lower bounds on the wave speed divided by $\rho^{1/3}$ for various $\nu$ in an effort to determine the asymptotic pre-factor and to provide justification that $\underline c_*(\nu,\rho) = \overline c_*(\nu,\rho)$, at least for large $\rho$. In all cases the bounds are within $1$-$2\%$ of being sharp and the pre-factors have converged to at least two significant digits. From these values we again see significant evidence for the conjecture that the minimum wave speed is monotonically increasing in $\nu \geq 0$. As pointed out above, the lower bound provided by Lemma~\ref{lem:LargeRho2} appears to be sharp since it gives an asymptotic pre-factor for the inviscid equation of at least $(3/2)^{1/3} \approx 1.1447$, which matches our upper and lower bounds to three significant digits. This provides evidence for the conjecture that our lower bound in Lemma~\ref{lem:LargeRho2} is sharp, while the upper bound of $\sqrt{3} \approx 1.7321$ from Lemma~\ref{lem:LargeRho} could be improved. Unfortunately, our upper and lower bounds for the viscous equation, presented in Lemmas~\ref{lem:LargeRho3} and \ref{lem:LargeRho4}, do not appear to be similarly sharp. 

\begin{table} 
\begin{center}
\begin{tabular}{ c | c c c c c c c c c c }
 $\rho$ & $10^{3/2}$ & $10^2$ & $10^{5/2}$ & $10^3$ & $10^{7/2}$ & $10^4$ & $10^{9/2}$ & $10^5$ & $10^{11/2}$ & $10^6$ \\ 
 \hline
 Upper Bound/$\rho^{1/3}$ & 1.194 & 1.169 &1.158 & 1.154 & 1.151 & 1.150 & 1.149 & 1.148 & 1.148 & 1.148 \\  
 Lower Bound/$\rho^{1/3}$ & 1.168 & 1.152 & 1.145 & 1.142 & 1.142 & 1.141 & 1.142 & 1.142 & 1.142 & 1.142 
\end{tabular} \vskip 0.5cm
\begin{tabular}{ c | c c c c c c c c c c }
 $\rho$ & $10^{3/2}$ & $10^2$ & $10^{5/2}$ & $10^3$ & $10^{7/2}$ & $10^4$ & $10^{9/2}$ & $10^5$ & $10^{11/2}$ & $10^6$ \\ 
 \hline
 Upper Bound/$\rho^{1/3}$ & 1.212 &1.190 & 1.180 & 1.176 & 1.173 & 1.172 & 1.171 & 1.171 & 1.171 & 1.170 \\  
 Lower Bound/$\rho^{1/3}$ & 1.159 & 1.140 &1.133 & 1.129 & 1.126 & 1.124 & 1.124 & 1.123 & 1.123 & 1.123 
\end{tabular} \vskip 0.5cm
\begin{tabular}{ c | c c c c c c c c c c }
 $\rho$ & $10^{3/2}$ & $10^2$ & $10^{5/2}$ & $10^3$ & $10^{7/2}$ & $10^4$ & $10^{9/2}$ & $10^5$ & $10^{11/2}$ & $10^6$ \\ 
 \hline
 Upper Bound/$\rho^{1/3}$ & 1.371 &1.357 & 1.351 & 1.349 & 1.347 & 1.346 & 1.346 & 1.346 & 1.346 & 1.346 \\  
 Lower Bound/$\rho^{1/3}$ & 1.343 & 1.331 &1.327 & 1.324 & 1.322 & 1.319 & 1.317 & 1.318 & 1.318 & 1.319 
\end{tabular} \vskip 0.5cm
\begin{tabular}{ c | c c c c c c c c c c }
 $\rho$ & $10^{3/2}$ & $10^2$ & $10^{5/2}$ & $10^3$ & $10^{7/2}$ & $10^4$ & $10^{9/2}$ & $10^5$ & $10^{11/2}$ & $10^6$ \\ 
 \hline
 Upper Bound/$\rho^{1/3}$ & 1.861 & 1.862 & 1.864 & 1.865 & 1.866 & 1.866 & 1.866 & 1.866 & 1.867 & 1.867 \\  
 Lower Bound/$\rho^{1/3}$ & 1.861 & 1.862 & 1.864 & 1.865 & 1.865 & 1.865 & 1.864 & 1.864 & 1.862 & 1.862 
\end{tabular}
\end{center}
\caption{Bounds on the pre-factor of the wave speeds using the bounding methods of \cite{BramburgerGoluskin}. We present upper and lower bounds on the minimum wave speed divided by $\rho^{1/3}$ for system~\eqref{FKPP_Burgers}, with $\nu = 0,0.1,1,10$, from top to bottom.}
\label{table1}
\end{table}

\subsection*{Related work}

Our results are placed into the large body of applied and mathematical studies into the connection between front propagation and advection.  Due to the size of this literature, we are only able to give an incomplete sampling of what has been done and encourage the interested reader to look at references within those cited works below.  We mention the following threads of research: the effect of a large shear flow~\cite{HamelZlatos}, a large periodic flow~\cite{Zlatos, Zlatos2, Zlatos3}, a fractal flow~\cite{MajdaSouganidis_Fractal}, a random spatially invariant flow~\cite{HendersonSouganidis, NolenXin2} and large random shear flow~\cite{NolenXin1}.  We also point out related work in which the reaction-diffusion equation is replaced instead by a Hamilton-Jacobi equation~\cite{XinYuG, XinYuHJ, XinYuZlatos} (see, e.g.,~\cite{MajdaSouganidis} for a discussion of the connection between this and reaction-diffusion equations).  It is also important to mention the large amount of formal work done on the applied end into turbulent combustion~\cite{ASY, ChertkovYakhot, KAW, MayoKerstein1, MayoKerstein2, Yakhot}.

In the above works, the advection is prescribed and is independent of the reacting quantity.  The closest works to the current one are those in which the advection is coupled to the reacting quantity.  The reactive Boussinesq equation and the FKPP-Burgers of Constantin et al. are the closest to the setting investigated here and were discussed in the introduction. Due to Theorems \ref{thm:MinSpeed1} and \ref{thm:Viscous} above, it is natural to conjecture that traveling waves in the reactive Boussinesq model have minimal speed that is $O(\rho^{1/3})$ when $\rho$ is large. Aside from those, the FKPP-Keller-Segel equation~\cite{NadinPerthameRyzhik} is the only other setting we are aware of that has seen significant interest in recent time.  In this equation, the reacting quantity is a population of bacteria that tend to aggregate.  This aggregate is modeled via a coupled advection term (chemotaxis).  However, in contrast to our work, it is known that the chemotactic term (the advection) neither slows down nor speeds up front speeds~\cite{Bramburger, NadinPerthameRyzhik, SalakoShenXue}.  Hence, that phenomena and those studied here are quite different.  We mention, however, various settings in which dispersive chemotaxis can cause faster propagation~\cite{HamelHenderson, Henderson_Chemotaxis}.


\section{Proof of Theorems~\ref{thm:Existence1} and \ref{thm:MinSpeed1}}\label{sec:Inviscid} 

Throughout this section we consider \eqref{FKPP_Burgers} with $\nu = 0$, referred to as the inviscid equation. Traveling wave solutions of \eqref{FKPP_Burgers} take the form $T(x,t) = T(\xi)$ and $U(x,t) = U(\xi)$, where $\xi := x - ct$ is the traveling wave variable and the parameter $c \geq 0$ is the wave speed. With this ansatz $T(\xi)$ and $U(\xi)$ solve
\begin{equation}\label{e.tw}
	\begin{split}
		-c\frac{\drm T}{\drm \xi} - \frac{\drm^2 T}{\drm \xi^2} + \frac{\drm}{\drm \xi}(UT) &= T(1-T), \\
		-c\frac{\drm U}{\drm \xi} + U\frac{\drm U}{\drm \xi} &= \rho T(1 - T),
	\end{split}
\end{equation}
where we recall that the second equation has no second derivative in $\xi$ since $\nu = 0$. Introducing the $\xi$-dependent quantity
\begin{equation}\label{Vdef}
	V := cT + \frac{\drm T}{\drm \xi} - UT 
\end{equation}
results in the first order system of ODE 
\begin{equation}\label{ODE}
	\begin{split}
		\dot{T} &= -cT + UT + V, \\
		\dot{U} &= \bigg(\frac{\rho}{U - c}\bigg)T(1-T), \\
		\dot{V} &= T(T-1).
	\end{split}
\end{equation}
Here and in what follows we use a dot to denote differentiation with respect to the independent variable $\xi$. We note that the existence of traveling wave solutions of \eqref{FKPP_Burgers} with $\nu = 0$ is now equivalent to finding heteroclinic trajectories between two equilibria of \eqref{ODE} such that $T \geq 0$ using the wave speed $c > 0$ as a parameter.

We can simplify our analysis of \eqref{ODE} by observing that it has a conserved quantity; that is, the equality 
\begin{equation}\label{Conserved}
	\frac{1}{2}U^2 - cU + \rho V = C
\end{equation}
holds along trajectories of \eqref{ODE} for some fixed $C \in \R$ and all $\xi \in \R$. Since we are interested in heteroclinic solutions of \eqref{ODE} that satisfy $(T(\infty),U(\infty)) = (0,0)$, it follows from the above conserved quantity that we may restrict ourselves to the curve
\be\label{Conserved_V}
	\frac{1}{2}U^2 - cU + \rho V = 0 \iff V = \frac{U}{2\rho}(2c - U) 	
\ee
representing $C = 0$ in \eqref{Conserved}. This results in the planar dynamical system 
\begin{equation}\label{Planar}
	\begin{split}
		\dot{T} &= -cT + UT + \frac{U}{2\rho}(2c - U) , \\
		\dot{U} &= \bigg(\frac{\rho}{U - c}\bigg)T(1-T), 
	\end{split}
\end{equation}
which forms the basis for investigation in what follows. 

This section is broken down into two subsections. The first, \S~\ref{subsec:InviscidExistence}, provides a quantitative analysis of \eqref{Planar} and details the existence of the desired heteroclinic solutions all $\rho > 0$ and $c > 0$ taken sufficiently large. We also prove the ordering \eqref{Ordering} that must hold for traveling wave of the inviscid equation. We follow this general analysis with \S~\ref{subsec:InviscidSpeed} where we begin by proving the existence of a minimal parameter value, $c_0(\rho)$, for which traveling waves of \eqref{FKPP_Burgers} with $\nu = 0$ exist if and only if $c \geq c_0(\rho)$, for all $\rho > 0$. Upon proving the existence of this minimal wave speed we turn to proving the properties of $c_0(\rho)$ detailed in Theorem~\ref{thm:MinSpeed1} in the small and large $\rho > 0$ parameter regimes.


\subsection{Existence of traveling waves}\label{subsec:InviscidExistence}

In this subsection, we show that, when $c$ is sufficiently large, the planar ODE~\eqref{Planar} exhibits the desired heteroclinic trajectories, thereby showing the existence of traveling wave solutions of \eqref{FKPP_Burgers} with $\nu = 0$ for large $c$. First, we notice that the only equilibrium solutions of~\eqref{Planar} of the form $(1,u_0)$ are those corresponding to
\be
	u_0 = c + \rho \pm \sqrt{c^2 + \rho^2}.
\ee
Our first result shows that there is no heteroclinic connection between $(1, c+ \rho + \sqrt{c^2 + \rho^2})$ and $(0,0)$, the proof of which is left to \Cref{sec:big_eq}.

\begin{lem}\label{lem:big_equilibrium}
	Suppose that $c\geq 1$ and $(T,U)$ is a solution of~\eqref{Planar} such that $(T(-\infty), U(-\infty)) = (1, c + \rho + \sqrt{c^2 + \rho^2})$.  Then, either $T$ becomes negative or $\displaystyle \liminf_{\xi\to\infty} U(\xi) = c$.
\end{lem}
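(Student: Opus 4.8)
The plan is to treat any nonconstant solution whose backward limit is $(1,u_0^+)$, where I write $u_0^\pm := c+\rho\pm\sqrt{c^2+\rho^2}$, as an orbit on the one-dimensional unstable manifold of a saddle and to analyze its two branches separately. First I would linearize \eqref{Planar} at $(1,u_0^+)$. Writing $s:=\sqrt{c^2+\rho^2}$ and noting $u_0^+-c=\rho+s$, a direct computation gives the Jacobian $\left(\begin{smallmatrix} \rho+s & -s/\rho \\ -\rho/(\rho+s) & 0\end{smallmatrix}\right)$, whose determinant $-s/(\rho+s)$ is negative. Hence $(1,u_0^+)$ is a hyperbolic saddle with a single unstable direction. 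Since the positive eigenvalue exceeds the trace $\rho+s$, the eigenvector relation forces its two components to have opposite signs, so the two branches of the unstable manifold are distinguished by (A) $T$ decreasing below $1$ while $U$ increases above $u_0^+$, and (B) $T$ increasing above $1$ while $U$ decreases below $u_0^+$. As the hypothesis forces $(T,U)\to(1,u_0^+)$ as $\xi\to-\infty$, the solution must coincide with one of these branches.

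The engine of the argument is the sign structure of the flow in the half-plane $U>c$. From the second equation in \eqref{Planar}, $\dot U>0$ exactly when $0<T<1$ and $\dot U<0$ exactly when $T<0$ or $T>1$, so $U$ is monotone on each region cut out by the lines $T=0$ and $T=1$. The crucial barrier comes from evaluating the first equation on $T=1$, where it factors as $\dot T|_{T=1}=-\tfrac{1}{2\rho}(U-u_0^-)(U-u_0^+)$. Since $u_0^-<c<u_0^+$, this shows $\dot T>0$ on the segment $\{T=1,\ c<U<u_0^+\}$ and $\dot T<0$ on $\{T=1,\ U>u_0^+\}$, so the line $T=1$ can be crossed only upward below $u_0^+$ and only downward above $u_0^+$.

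For branch (B) I would combine these facts to trap the orbit in the box $\{T>1,\ c<U<u_0^+\}$: because $T>1$ keeps $\dot U<0$, $U$ stays below $u_0^+$ and strictly decreases, while the upward barrier on $\{T=1,\ c<U<u_0^+\}$ prevents $T$ from returning to $1$. One then argues $T$ stays bounded away from $1$, so $T(1-T)$ is bounded away from $0$ and $\dot U=\rho T(1-T)/(U-c)\to-\infty$ as $U\to c^+$; thus $U$ is driven monotonically to $c$ over its maximal interval of existence, giving $\liminf U=c$. For branch (A) the same dichotomy traps the orbit in $\{0<T<1,\ U>u_0^+\}$ as long as $T>0$: here $\dot U>0$ forces $U$ to increase, the downward barrier on $\{T=1,\ U>u_0^+\}$ prevents escape through $T=1$, and since any equilibrium of \eqref{Planar} requires $T\in\{0,1\}$ there is no equilibrium with $0<T<1$, ruling out a finite limit for the increasing $U$. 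Hence either $T$ reaches $0$ directly, or $U\to\infty$ while $0<T<1$, in which case $\dot T=T(U-c)+\tfrac{U}{2\rho}(2c-U)$ is eventually dominated by $-U^2/(2\rho)<0$ and drives $T$ below $0$; in both cases $T$ becomes negative.

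I expect the main obstacle to be branch (B), specifically making rigorous the passage to the singular line $U=c$: one must confirm that $T$ cannot drift back down toward $1$ (otherwise $T(1-T)$, and hence $\dot U$, could vanish and the orbit need not reach $c$), which requires a careful invariant-region argument using the barrier identity together with the monotonicity of $U$. A secondary subtlety is the interpretation of $\liminf_{\xi\to\infty}U$ when $U$ reaches $c$ at a finite value of $\xi$, where the vector field is singular; I would phrase the conclusion in terms of the infimum of $U$ over the maximal interval of existence, which equals $c$ in either case. The remaining bookkeeping—verifying the eigenstructure of the saddle and the elementary sign computations—is routine.
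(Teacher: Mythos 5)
Your proposal is correct in substance and shares the paper's skeleton---both arguments split on the two branches of the one-dimensional unstable manifold of the saddle $(1,u_0^+)$ (the paper's ``Case one/two'' are exactly your branches (A)/(B)), and both ultimately rest on the sign of the quadratic $-\tfrac{1}{2\rho}(U-u_0^-)(U-u_0^+)$---but several key mechanisms differ, and the comparison is instructive. In branch (A) the paper rules out $T$ returning to $1$ by a second-derivative computation at a putative local minimum of $T$, whereas you use the barrier identity $\dot T|_{T=1}=-\tfrac{1}{2\rho}(U-u_0^-)(U-u_0^+)<0$ for $U>u_0^+$; your version is cleaner and unifies the two branches under one identity. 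More significantly, you complete branch (A): the paper, as written, only excludes $T$ re-entering $[1,\infty)$ and $T$ touching $0$, and never derives the final contradiction showing that the orbit cannot stay in $\{0<T<1\}$ forever with $U$ increasing---your dichotomy (no equilibrium with $T\in(0,1)$, hence no finite limit for the monotone $U$; and $U\to\infty$ forces $\dot T\le -\tfrac{1}{2\rho}(U-u_0^-)(U-u_0^+)\to-\infty$, driving $T$ negative) is precisely the missing endgame, though the ``no finite limit'' step should be fleshed out via an $\omega$-limit-set or limit-of-derivative argument, and the passage from ``$T$ reaches $0$'' to ``$T$ becomes negative'' uses the transversality $\dot T|_{T=0}=\tfrac{U}{2\rho}(2c-U)<0$, valid since $U>u_0^+>2c$. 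In branch (B) the paper handles the singular line with the auxiliary function $Y=(U-c)^2$, whose derivative $2\rho T(1-T)$ is smooth (so crossing $U=c$ would force $Y<0$), and then settles for the weaker conclusion $\liminf U\ge c$; you instead aim at the lemma's literal claim $U\to c$ and correctly flag the needed step ($T$ bounded away from $1$) as the main obstacle. That step is fillable exactly as you suggest: since $U$ is strictly decreasing, past any finite $\xi_1$ one has $U\le U(\xi_1)<u_0^+$, so $\dot T|_{T=1}\ge\kappa>0$ uniformly for $U\in[c,U(\xi_1)]$, a continuity argument yields a strip $1\le T\le 1+\epsilon_0$ exitable only upward, and then $\dot U\le-\rho\epsilon_0(1+\epsilon_0)/(U-c)$ drives $(U-c)^2$ to zero at a linear rate; borrowing the paper's function $Y$ makes this last step one line. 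Finally, your reformulation of the conclusion as ``$\inf U=c$ over the maximal interval of existence'' is the right repair for a genuine imprecision in the statement itself (the orbit can reach the singular line $U=c$ in finite $\xi$, and the constant solution must be excluded as you do), so on these points your treatment is more careful than the paper's.
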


As a result of \Cref{lem:big_equilibrium}, we can restrict our attention to finding heteroclinic orbits connecting $(1,u_0)$ to $(0,0)$, where $u_0 := c + \rho - \sqrt{c^2 + \rho^2}$.  Any such heteroclinic orbit must be on the unstable manifold of $(1,u_0)$ and, as seen in the following proposition, \Cref{prop:Equilibria}, either enters
\be\label{BoxDef}
	\mathcal{B} = \{(T,U):\ 0 \leq T \leq 1,\ 0 \leq U \leq u_0\},
\ee
or the quadrant $\mathcal{Q} = \{(T,U) : T >1, U > u_0\}$.  A simple argument that follows as in the first case of the proof of \Cref{lem:big_equilibrium} shows that the component entering the quadrant $\mathcal{Q}$ cannot connect back to the origin as $\xi \to \infty$.  Therefore, this provides an {\em a priori} justification that we may restrict our attention to heteroclinic orbits that {\em enter} into the box $\mathcal{B}$.

\begin{prop}\label{prop:Equilibria} 
	For each $c,\rho > 0$ we have the following:
	\begin{enumerate}[(1)]
		\item The origin of \eqref{Planar} is locally asymptotically stable for all $c > 0$. In particular, when $c \in (0,2)$ the linearization of \eqref{Planar} about the origin has a pair of complex-conjugate eigenvalues with negative real part, and when $c \geq 2$ all eigenvalues are real and negative.
		\item The unstable manifold of the equilibrium $(1,u_0)$ of \eqref{Planar} is one-dimensional and its tangent vector at this equilibrium points to the interior of $\mathcal{B}$. 
		\item The unstable manifold of $(1,u_0)$ is necessarily monotone while it remains in $\mathcal{B}$. That is, $\dot{T} < 0$ and $\dot{U} < 0$ everywhere along the trajectory in $\mathcal{B}$.
		\item The unstable manifold of $(1,u_0)$ can only leave $\mathcal{B}$ by crossing $U = 0$.
	\end{enumerate}
\end{prop}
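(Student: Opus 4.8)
The plan is to dispatch parts (1)--(2) by direct linearization and parts (3)--(4) by a phase-plane argument organized around the single nontrivial nullcline of \eqref{Planar}. For (1) I would linearize at the origin: the only linear contributions are $-cT+\frac{c}{\rho}U$ in the first equation and $-\frac{\rho}{c}T$ in the second, so the Jacobian has trace $-c$ and determinant $1$, giving characteristic polynomial $\lambda^2+c\lambda+1$ with discriminant $c^2-4$. This yields asymptotic stability for every $c>0$ together with the stated real/complex dichotomy at $c=2$. For (2), linearizing at $(1,u_0)$ (using $0<u_0<c$, so $u_0-c<0$) produces a Jacobian with negative trace and \emph{negative} determinant, hence a saddle whose unstable manifold is one-dimensional. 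The eigenvector relation forces both components of the unstable eigenvector to share a sign, so one of the two branches has both components negative; since $(1,u_0)$ is the top-right corner of $\mathcal{B}$, that branch points into the interior of $\mathcal{B}$.

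The engine for (3)--(4) is the rewriting $\dot T=(c-U)\,(g(U)-T)$ with $g(U):=\frac{U(2c-U)}{2\rho(c-U)}$ the $\dot T=0$ nullcline. Since $0\le U\le u_0<c$ on $\mathcal{B}$ the factor $c-U$ is positive, so the sign of $\dot T$ is governed by $g(U)-T$. A short computation gives $g'(U)=\frac{2c^2-2cU+U^2}{2\rho(c-U)^2}>0$ on $[0,c)$ (the numerator has negative discriminant), so $g$ increases from $g(0)=0$ to $g(u_0)=1$, the endpoint value being exactly the statement that $(1,u_0)$ is an equilibrium. Part (4) then follows by inspecting the vector field on $\partial\mathcal{B}$: on $T=0$ one has $\dot T=\frac{U(2c-U)}{2\rho}>0$ (inward); on $T=1$ one has $\dot T=(c-U)(g(U)-1)<0$ since $g<1$ on $[0,u_0)$ (inward); on $U=u_0$ one has $\dot U=\frac{\rho}{u_0-c}T(1-T)<0$ (inward); and on $U=0$ one has $\dot U=-\frac{\rho}{c}T(1-T)<0$ (outward). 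Hence $U=0$ is the only edge through which an orbit can leave $\mathcal{B}$.

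For (3), the inequality $\dot U<0$ on the interior is immediate from $\dot U=\frac{\rho}{U-c}T(1-T)$ with $U<c$ and $0<T<1$, so the crux is $\dot T<0$, i.e.\ $T>g(U)$ along the orbit, and this is the main obstacle since $\dot T$ is genuinely sign-indefinite on $\mathcal{B}$ (a difference of a nonpositive and a nonnegative term) and cannot be read off pointwise. I would propagate it along the orbit in two steps. First, no crossing: on $\{T=g(U)\}$ with $0<T<1$ the quantity $\phi:=T-g(U)$ satisfies $\dot\phi=-g'(U)\dot U>0$, so orbits cross the nullcline only from $\phi<0$ to $\phi>0$; thus an orbit starting above the nullcline remains above it. Second, correct launch direction: writing $a:=c-u_0>0$, the defining relation $u_0=c+\rho-\sqrt{c^2+\rho^2}$ yields the clean identity $c^2=a^2+2a\rho$, whence $g'(u_0)=\frac{a+\rho}{\rho a}$ while the unstable eigenvector has slope $v_T/v_U=\frac{a\lambda_+}{\rho}$ for the positive eigenvalue $\lambda_+$. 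The requirement that the launch direction lie in $\{T>g(U)\}$ reduces to $\lambda_+<\frac{a+\rho}{a^2}$, and evaluating the characteristic polynomial $P(\lambda)=\lambda^2+a\lambda-(1+\rho/a)$ at $K:=\frac{a+\rho}{a^2}$ gives $P(K)=K^2>0$, confirming $\lambda_+<K$. Combining the two steps shows $T>g(U)$ throughout, hence $\dot T<0$, and the whole argument hinges on the algebraic identity $c^2=a^2+2a\rho$ forced by the choice of $u_0$.
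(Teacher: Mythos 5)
Your proposal is correct, and its skeleton matches the paper's: direct linearization for (1)--(2), preservation of the sign of $\dot T$ for (3), and boundary sign checks for (4); the only genuine difference is in how (3) is organized. The paper argues in the time domain: it takes the first $\xi_0$ with $\dot T(\xi_0)=0$, differentiates the first equation of \eqref{Planar}, and shows $\ddot T(\xi_0)<0$, so $T$ would have a strict local maximum there, contradicting its prior decrease. You argue in the phase plane via the nullcline $T=g(U)$ with $g(U)=\tfrac{U(2c-U)}{2\rho(c-U)}$: since $g'>0$ and $\dot\phi=-g'(U)\dot U>0$ on the nullcline for $\phi=T-g(U)$, orbits can only cross it upward. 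These are the same computation in different clothing --- at a touch point one checks that the paper's expression $\dot U\bigl(T+\tfrac{c-U}{\rho}\bigr)$ equals $(c-U)g'(U)\dot U$ --- but your packaging buys two things: your part (4) is independent of part (3) (you check the vector field on all four edges of $\mathcal{B}$, whereas the paper invokes monotonicity to exclude exit through $T=1$ and $U=u_0$), and you make fully explicit the launch-direction claim that the paper dispatches with one sentence (``the linearization\ldots implies $\dot T<0$ initially''). That said, your eigenvalue--slope comparison, though elegant (the identity $c^2=a^2+2a\rho$ giving $P\bigl(\tfrac{a+\rho}{a^2}\bigr)=\bigl(\tfrac{a+\rho}{a^2}\bigr)^2>0$), is more than is needed: since the unstable eigenvector has strictly negative $T$-component, $\dot T<0$ holds automatically along the orbit near $(1,u_0)$, and this is already equivalent to the orbit lying in $\{T>g(U)\}$ there.
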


\begin{proof}
	The stability of the equilibria is a straightforward checking of the eigenvalues and eigenvectors of the Jacobian matrix evaluated at each of the equilibria, and is therefore omitted. We now prove the monotonicity of the unstable manifold of $(1,u_0)$. We begin by noting that the linearization about the equilibrium $(1,u_0)$ implies that the segment of the unstable manifold that enters in the interior of $\mathcal{B}$ necessarily satisfies $\dot{T} < 0$ initially and we see that any solution which belongs the interior of $\mathcal{B}$ has $\dot{U} < 0$ everywhere since $U < c$ and $0 < T < 1$. Let us then assume for contradiction that there exists some $\xi_0 \in \R$ such that $\dot{T}(\xi_0) = 0$ and $\dot{T}(\xi) < 0$ for all $\xi < \xi_0$. Differentiating the first equation in \eqref{Planar} with respect to $\xi$ and evaluating at $\xi = \xi_0$ gives   
	\[
		\ddot{T}(\xi_0) = -c\dot{T}(\xi_0) + \dot{T}(\xi_0)U(\xi_0) + T(\xi_0)\dot{U}(\xi_0) - \frac{\dot{U}(\xi_0)}{\rho}(U(\xi_0) - c).
	\]
	By assumption we have $\dot{T}(\xi_0) = 0$ and $0 < T(\xi_0) < 1$, thus giving that $\dot{U}(\xi_0) < 0$. Putting this all together gives that 
	\[
		\ddot{T}(\xi_0) = T(\xi_0)\dot{U}(\xi_0) - \frac{\dot{U}(\xi_0)}{\rho}(U(\xi_0) - c)  < 0.
	\] 
	Hence, $T$ achieves a local maximum at $\xi = \xi_0$, which is impossible since $T$ is decreasing for all $\xi < \xi_0$. This proves the point (3) in the lemma.
	
	 Finally, is a straightforward checking that the unstable manifold of $(1,u_0)$ cannot leave $\mathcal{B}$ through the face $T = 0$ since we have $\dot{T} \geq 0$ here. Since the unstable manifold of $(1,u_0)$ is decreasing in $\mathcal{B}$, it follows that it cannot leave through the face $T = 1$ or $U = 1$. Therefore, it can only leave through the face $U = 0$, as claimed.
\end{proof} 

From Proposition~\ref{prop:Equilibria}(1) we can see that for $0 < c < 2$ a heteroclinic orbit (if it exists) would necessarily have negative $T$ and $U$ components. This of course violates the condition that we are seeking nonnegative traveling waves and therefore we have the explicit lower bound $c_* \geq 2$ on the minimum wave speed, as given in Theorem~\ref{thm:MinSpeed1}(1). Notice that Proposition~\ref{prop:Equilibria}(3) also gives the desired monotonicity results from Theorem~\ref{thm:Existence1} whenever a heteroclinic solution exists.

Most important to what follows is that Proposition~\ref{prop:Equilibria}(4) details that the unstable manifold of $(1,u_0)$ can only leave $\mathcal{B}$ by crossing $U = 0$. The following lemma shows that an appropriate trapping boundary can be constructed to prevent $U$ from crossing $0$ for any fixed $\rho > 0$ and all sufficiently large $c$. This result therefore shows that fast-moving ($c \gg 1$) traveling waves of \eqref{FKPP_Burgers} exist for each fixed $\rho > 0$ since any trajectory of \eqref{Planar} that remains in $\mathcal{B}$ for all $\xi\in\R$ must converge to the origin.

\begin{lem}\label{lem:Het} 
	For each $\rho > 0$, there exists a $\overline c \geq 2$, which depends on $\rho$, such that for all $c \geq \overline c$ system \eqref{Planar} has a heteroclinic orbit from $(1,u_0)$ to $(0,0)$ which remains in $\mathcal{B}$ for all $\xi \in \R$.
\end{lem}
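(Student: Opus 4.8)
The plan is to show that, for $c$ large, the branch of the unstable manifold of $(1,u_0)$ that enters $\mathcal{B}$ (guaranteed by \Cref{prop:Equilibria}(2)) never leaves $\mathcal{B}$; the existence of the heteroclinic then follows from a soft dynamical-systems argument. By \Cref{prop:Equilibria}(4) this orbit can only exit $\mathcal{B}$ through the face $U=0$, so it suffices to keep it strictly above $U=0$ while $T>0$. I would accomplish this by trapping the orbit above a straight line through the origin, exploiting the largeness of $c$.

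Concretely, fix the slope $a=\rho/2$ and set $\Phi:=U-aT$, so the candidate barrier is the segment $\{\Phi=0\}=\{U=aT\}$. Since $u_0=c+\rho-\sqrt{c^2+\rho^2}$ increases to $\rho$ as $c\to\infty$, for $c$ large we have $a=\rho/2<u_0<c$, so the orbit begins in $\{\Phi>0\}$ near $(1,u_0)$ and the barrier stays below the singular line $U=c$. Along \eqref{Planar}, on $\{U=aT\}$ one computes
\[
	\frac{\drm}{\drm\xi}\Phi=\dot U-a\dot T=T\,G(T),\qquad
	G(T):=\frac{\rho(1-T)}{aT-c}+ac\Big(1-\frac a\rho\Big)-a^2T+\frac{a^3T}{2\rho}.
\]
Because $aT-c<0$ and $a<\rho$, the dominant term for large $c$ is $ac(1-a/\rho)=\rho c/4>0$, while the remaining contributions are bounded below by $-\rho^2/4-\rho/(c-a)$; hence $G(T)>0$ for all $T\in(0,1]$ once $c$ exceeds a threshold $\overline c\ge 2$ depending only on $\rho$. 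This yields the trapping inequality $\frac{\drm}{\drm\xi}\Phi>0$ on $\{\Phi=0\}\cap\{0<T\le 1\}$.

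With this sign condition the orbit cannot pass from $\{\Phi>0\}$ to $\{\Phi<0\}$: at any $\xi_0$ where it met the barrier we would have $\Phi(\xi_0)=0$ and $\frac{\drm}{\drm\xi}\Phi(\xi_0)>0$, contradicting $\Phi$ decreasing through $0$. Since the orbit starts with $\Phi>0$, it stays there, so $U>aT>0$ whenever $T>0$; in particular it can never reach $U=0$ at a point with $T>0$, and so by \Cref{prop:Equilibria}(4) it remains in $\mathcal{B}$ for all $\xi$. Finally, \Cref{prop:Equilibria}(3) shows $(T,U)$ is monotone decreasing and bounded in $\mathcal{B}$, hence convergent as $\xi\to\infty$; the limit, the vector field being continuous on $\mathcal{B}\subset\{U<c\}$, must be an equilibrium of \eqref{Planar}. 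The only equilibria in $\mathcal{B}$ are $(1,u_0)$ and $(0,0)$, and since $T$ strictly decreases below $1$ the limit is $(0,0)$, giving the desired heteroclinic orbit.

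The hard part is the sign of $G$, i.e. the competition between the two $\mathcal{O}(c)$ contributions $ac$ and $a^2c/\rho$ arising from $\dot T$; these combine favorably precisely because the slope $a$ may be taken strictly below $\rho$, and the choice must simultaneously satisfy $a\le u_0$ so the orbit begins above the barrier. Both constraints hold for large $c$ since $u_0\uparrow\rho$, e.g. with $a=\rho/2$. A secondary point to verify is that the factor $T$ in $\dot U$ makes the barrier condition degenerate gracefully as $T\downarrow 0$, so that no separate analysis is required near the origin.
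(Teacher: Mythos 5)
Your proposal is correct and follows essentially the same strategy as the paper: for large $c$ one constructs a curve joining the origin to a point below $(1,u_0)$ that the unstable manifold of $(1,u_0)$ cannot cross downward (so the orbit never reaches $U=0$ with $T>0$), and then the monotonicity in \Cref{prop:Equilibria}(3) forces convergence to the origin. The only difference is the choice of barrier: you trap above the straight line $U=(\rho/2)T$, whereas the paper traps above the parabola $U = c-\sqrt{c^2-c\rho T}$ (equivalently $\tfrac{U}{2\rho}(2c-U)=\tfrac{c}{2}T$), and both verifications reduce to the same type of large-$c$ sign estimate on the barrier, with your computation of $G(T)$ checking out.
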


\begin{proof}
	To prove this lemma we show that for each fixed $\rho > 0$  the curve
	\begin{equation}\label{Boundary}
		U = c - \sqrt{c^2 - c\rho T}, \quad T \in [0,1] 	
	\end{equation}
	lies in $\mathcal{B}$ and bounds trajectories above it for all $c \gg 1$. We comment on the fact that if \eqref{Boundary} does indeed define a trapping boundary (for the unstable manifold of $(1,u_0)$) that remains in $\mathcal{B}$ for all $\xi \in\R$, it follows from the monotonicity results in Proposition~\ref{prop:Equilibria}(3) that this unstable manifold must converge to the origin as $\xi \to \infty$.
	
	Hence, our first goal is to show that if $c$ is taken large enough, the curve \eqref{Boundary} is contained in $\mathcal{B}$ for all $T \in [0,1]$ and that the unstable manifold of $(1,u_0)$ is initially above it. Using straightforward calculations, we see that
\[\begin{split}
	c - \sqrt{c^2 - c \rho} < u_0
		&\Longleftrightarrow \sqrt{ c^2 + \rho^2} < \rho + \sqrt{c^2 - c\rho}
		\Longleftrightarrow c^2 + \rho^2 < \rho^2 + (c^2 - c\rho) + 2 \rho \sqrt{c^2 - c\rho}\\
		&\Longleftrightarrow \frac{c}{2} < \sqrt{c^2 - c\rho}
		\Longleftrightarrow \frac{c^2}{4} < c^2 - c\rho
		\Longleftrightarrow \rho < \frac{3c}{4}.
\end{split}\]
Hence, as long as $c > 4\rho/3$, we have $c - \sqrt{c^2 + \rho^2} < u_0$ as desired. This implies that we find that the curve \eqref{Boundary} does indeed lie entirely in $\mathcal{B}$ for all $T\in[0,1]$ for all $c$ sufficiently large, relative to $\rho$. We now show that this curve is a trapping boundary.
	
	For this curve to be a trapping boundary we require that on the curve \eqref{Boundary} we have 
	\begin{equation}\label{Trap}
		\frac{\drm}{\drm \xi}\bigg(\frac{U}{2\rho}(2c - U) - \frac{c}{2} T\bigg) \geq 0.
	\end{equation} 
	Indeed, rearranging \eqref{Boundary} we have 
	\begin{equation}
		\frac{U}{2\rho}(2c - U) - \frac{c}{2} T = 0,
	\end{equation} 
	and therefore \eqref{Trap} defines a condition for which trajectories of \eqref{Planar} that initially start above \eqref{Boundary} in $\mathcal{B}$ must remain above it for all $\xi$. Restricting $T$ and $U$ to satisfy \eqref{Boundary} and evaluating gives
	\begin{equation}\label{e.c9101}
		\begin{split}
		\frac{\drm}{\drm \xi}\bigg(\frac{U}{2\rho}(2c - U) - \frac{c}{2} T\bigg) &= \frac{1}{\rho}(c - U)\dot{U} - \frac{c}{2}\dot{T} \\
		&= T(T-1) -\frac{c}{2}(-cT + UT + \frac{c}{2} T) \\
		&= T\bigg[-\frac{c^2}{4} - \frac{c}{2} (U - c) + (T - 1)\bigg] \\
		&= T\bigg[-\frac{c^2}{4} + \frac{c}{2}\sqrt{c^2 - c\rho T} + (T - 1)\bigg]. 
		\end{split}
	\end{equation}
	For the above expression to be nonnegative we require that
	\begin{equation}\label{TrapCondition}
		\frac{c^2}{4} - \frac{c}{2}\sqrt{c^2 - c\rho T} + (1 - T) \leq 0
	\end{equation}
	for all $T \in [0,1]$. Again taking $c \gg 1$, we have that \eqref{TrapCondition} is bounded as follows for all $T \in [0,1]$:
	\begin{equation}
		\begin{split}
		 	\frac{c^2}{4} - \frac{c}{2}\sqrt{c^2 - c\rho T} + (1 - T) &\leq \frac{c^2}{4} - \frac{c}{2}\sqrt{c^2 - c\rho} + 1 \\
			&= -\frac{c^2}{4} + \frac{c\rho}{4} + 1 + \mathcal{O}(\rho^2),	
		\end{split}
	\end{equation} 
	which for each fixed $\rho > 0$ is strictly negative for all sufficiently large $c > 0$. This therefore shows that the curve \eqref{Boundary} forms a trapping boundary when $c$ is taken sufficiently large, and therefore trajectories in $\mathcal{B}$ cannot cross this curve. From the previous discussion, this implies that for fixed $\rho > 0$ and sufficiently large $c$ we have that the unstable manifold of $(1,u_0)$ converges to the origin as $\xi \to \infty$, thus giving a heteroclinic orbit. This completes the proof.
\end{proof} 

Having now demonstrated the existence of traveling waves, we provide the following auxiliary result which establishes the ordering \eqref{Ordering} and is useful for showing that the set of admissible traveling wave speeds makes up a closed half-line in the following subsection. 

\begin{lem}\label{lem:Ordering} 
	Fix any $c,\rho>0$ and suppose that $(T, U)$ solves~\eqref{Planar} with $(T(-\infty),U(-\infty)) = (1,u_0)$.  If $(T(\xi),U(\xi)) \in \mathcal{B}$ for all $\xi < \xi_0$ then $\rho T(\xi_0) < U(\xi_0)$.
\end{lem}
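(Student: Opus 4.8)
The plan is to control the sign of the single scalar quantity $W := U - \rho T$ along the orbit. The ordering \eqref{Ordering} is exactly the assertion that $U < \rho T$, i.e. $W < 0$, so it suffices to show that $W$ cannot vanish on the portion of the trajectory lying in $\mathcal{B}$ (equivalently, that $W$ can never cross upward through $0$), and then to read off the strict inequality between $U$ and $\rho T$ at $\xi_0$. The two ingredients I would use are the value of $W$ at the backward limit and the sign of $\dot W$ on the set where $W$ vanishes.

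First I would pin down the backward value: since $(T(-\infty),U(-\infty)) = (1,u_0)$ with $u_0 = c + \rho - \sqrt{c^2+\rho^2}$, we have $W(-\infty) = u_0 - \rho = c - \sqrt{c^2+\rho^2} < 0$, so $W$ is strictly negative for all sufficiently negative $\xi$. The key step is then to differentiate $W$ using \eqref{Planar} and evaluate on the zero set $\{W = 0\} = \{U = \rho T\}$. Writing $\dot W = \dot U - \rho \dot T$ and substituting $U = \rho T$, the two $\pm\rho c T$ contributions cancel and one is left with
\begin{equation*}
	\dot W\big|_{U = \rho T} = \frac{\rho T(1-T)}{\rho T - c} - \frac{\rho^2 T^2}{2}.
\end{equation*}
Inside $\mathcal{B}$ we have $0 \le U = \rho T \le u_0 < c$, so $\rho T - c < 0$; hence for $T \in (0,1)$ both terms on the right are strictly negative, giving $\dot W < 0$ at every interior point where $W$ vanishes.

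Combining these facts yields the conclusion by a standard first-crossing argument: if $W$ had a first zero $\xi_1 \le \xi_0$ approached from negative values, then $W$ would be nondecreasing there, forcing $\dot W(\xi_1) \ge 0$ and contradicting $\dot W(\xi_1) < 0$ (the degenerate cases are easily dispatched: $T=1$ occurs only at $\xi=-\infty$ by the monotonicity in Proposition~\ref{prop:Equilibria}(3), and a zero of $W$ with $T=0$ forces $U=0$, i.e. the corner $(0,0)$). Thus $W < 0$ on the whole orbit in $\mathcal{B}$, and at $\xi_0$ we obtain the strict ordering between $U$ and $\rho T$ asserted in \eqref{Ordering}, namely $U(\xi_0) < \rho T(\xi_0)$. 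In particular, since the only way the trajectory can exit $\mathcal{B}$ is through $U = 0$ (Proposition~\ref{prop:Equilibria}(4)), this strict separation guarantees $\rho T(\xi_0) > 0$ at any such exit, keeping the exit point away from the corner $(0,0)$, which is precisely what the crossing/half-line argument of the next subsection requires. I expect the main obstacle to be purely computational: carrying out the cancellation that produces the displayed formula for $\dot W$ and handling the degenerate endpoints $T \in \{0,1\}$ and $\xi \to \pm\infty$ cleanly, rather than any conceptual difficulty.
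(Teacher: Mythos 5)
Your proof is correct, but first note a discrepancy of direction: the statement as printed asserts $\rho T(\xi_0) < U(\xi_0)$, which is evidently a typo --- it contradicts the ordering \eqref{Ordering} and the way the lemma is used later (Lemma~\ref{lem:MinSpeed} invokes $T > U/\rho$ and Lemma~\ref{lem:SmallRho} invokes $U \leq \rho T$). The intended conclusion is $U(\xi_0) < \rho T(\xi_0)$, which is exactly what you prove, and is also what the paper's own proof derives despite repeating the typo in its final sentence.

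Your route is genuinely different from the paper's. The paper argues pointwise at $\xi_0$: by Proposition~\ref{prop:Equilibria}(3) it has $\dot T(\xi_0) < 0$, so \eqref{Planar} gives
\[
	0 < -\dot T(\xi_0) = (c-U)T - \frac{U}{2\rho}\left(2c-U\right) \leq (c-U)T - \frac{U}{\rho}(c-U),
\]
and dividing by $c-U>0$ yields $T > U/\rho$ at once --- a one-line rearrangement using only the $T$-equation and the sign of $\dot T$ at the single point $\xi_0$. You instead run a global invariant-region argument on $W = U - \rho T$: it is negative at $\xi=-\infty$, and on $\{W=0\}\cap\mathcal{B}$ your cancellation is correct and gives $\dot W\vert_{U=\rho T} = \rho T(1-T)/(\rho T - c) - \rho^2 T^2/2 < 0$ for $T\in(0,1)$, so $W$ cannot cross zero. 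This is more work (it uses both equations of \eqref{Planar} and a first-crossing argument) but is in the same spirit as the trapping-boundary constructions used elsewhere in the paper (Lemmas~\ref{lem:Het}, \ref{lem:SmallRho}, \ref{lem:LargeRho}), and it exhibits $\{U < \rho T\}$ explicitly as a forward-invariant set, which the paper's pointwise argument does not make visible. The one point you should tighten is the corner case: when a first zero of $W$ has $T=0$ you correctly observe the point must be $(0,0)$, but you should say why that cannot happen --- $(0,0)$ is an equilibrium of \eqref{Planar}, and the vector field is smooth in a neighborhood of it (the factor $1/(U-c)$ is regular there since $U=0<c$), so by uniqueness of solutions the trajectory cannot reach it at finite $\xi$, contradicting $T(-\infty)=1$. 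The $T=1$ case is handled exactly as you say, by the strict monotonicity in Proposition~\ref{prop:Equilibria}(3).
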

\begin{proof}
	By the arguments in \Cref{prop:Equilibria},
	we have that $\dot{T}(\xi_0) < 0$. 
	Hence, from~\eqref{Planar}, we have
	\be
		0
			< - \dot{T}
			\leq (c- U)T - \frac{U}{\rho}\left( c - \frac{U}{2}\right)
			\leq (c-U)T - \frac{U}{\rho}(c - U).
	\ee
	Using the fact that $U \leq u_0 < c$, we arrive at the ordering $\rho T(\xi_0) < U(\xi_0)$ by simply rearranging the above and dividing by $c-U$.
\end{proof} 

\subsubsection{Proof of \Cref{lem:big_equilibrium}: ruling out the other unstable equilibrium}\label{sec:big_eq}

\begin{proof}[Proof of \Cref{lem:big_equilibrium}]
The proof when $(T,U)$ is an equilibrium is trivial, and, hence, omitted.  Using the linearization principle, either $\dot T(\xi)$ is positive or negative for all sufficiently negative $\xi$.

{\bf Case one: $\dot T(\xi)$ is negative for sufficiently negative $\xi$.} 
One can readily check that then $\dot U(\xi)$ is positive whenever $T \in (0,1)$.  If $T$ becomes negative, the proof is finished.  Hence, we assume that $T\geq 0$ everywhere.

Next, we rule out that $T\geq 1$ ever via an argument by contradiction.  If this were to happen, it must be that $T$ has a local minimum at $\xi_0$ such that $T(\xi_0) \in [0,1)$.  We consider first the case where $T(\xi_0) > 0$.  Then, differentiating~\eqref{Planar}, we find
\be\label{e.second_derivative}
	\ddot{T}(\xi_0) = -c\dot{T}(\xi_0) + \dot{T}(\xi_0)U(\xi_0) + T(\xi_0)\dot{U}(\xi_0) - \frac{\dot{U}(\xi_0)}{\rho}(U(\xi_0) - c).
\ee
Since $\xi_0$ is the location of a minimum, we have that the left hand side is nonnegative and $\dot T(\xi_0) = 0$.  It follows that
\be\label{e.second_derivative2}
	0 \leq T(\xi_0)\dot{U}(\xi_0) - \frac{\dot{U}(\xi_0)}{\rho}(U(\xi_0) - c)
		= -\dot U(\xi_0) \left(U(\xi_0) - c - T(\xi_0)\right).
\ee
As $T \in (0,1)$ for all $\xi < \xi_0$ and, thus, $\dot U(\xi) > 0$ for all $\xi \in (-\infty,\xi_0]$, it follows that $U(\xi_0) > U(-\infty) > 2c\geq c + T(\xi_0)$, since we have assumed that $c \geq 1$.  The combination of this and the positivity of $\dot U$, implies that the right hand side in~\eqref{e.second_derivative2} is negative, which is a contradiction.

Next, we consider the case where $T(\xi_0) = 0$.  In this case, we have
\be
	0 = \dot T(\xi_0)
		= \frac{U(\xi_0)}{2\rho} \left(2c - U(\xi_0)\right).
\ee
Arguing as in the previous paragraph, we have $U(\xi_0) > 2c$, which implies that the right hand side above is negative.  Hence, at $\xi = \xi_0$, $T$ transversely crosses from positive to negative, and therefore is negative for $\xi$ in an open neighbourhood to the right of $\xi_0$.  This is a contradiction since we assumed $T \geq 0$ for all $\xi \in \R$. This concludes the proof for the first case.

{\bf Case two: $\dot T(\xi)$ is positive for sufficiently negative $\xi$.}

We consider first the case that it is positive.  Notice that, as a result, $T>1$ on a set $(-\infty, \xi_0)$, where we allow for the possibility $\xi_0 = \infty$.

We first suppose that $\xi_0 = \infty$.  In this case, define the auxiliary function $Y = (U-c)^2$.  We see that
 \[
 	\dot Y = 2(U-c) \dot U = 2 \rho T(1-T),
\]
and, hence, $Y$ is decreasing everywhere.  As $Y\geq 0$, by definition, we find that $U$ cannot have crossed $c$, as this would have caused $Y$ to increase.  It follows that $\displaystyle \liminf_{\xi\to\infty}U(\xi) \geq c$.

Next, consider the case where $\xi_0 < \infty$.  There are two subcases: either $U$ crosses $c$ or not on $(-\infty, \xi_0)$.

Let us consider the first subcase; that is, suppose that there exists $\xi_1 < \xi_0$ such that $U(\xi_1) = c$.  Let $Y$ be as above and notice that, $\dot Y(\xi_1) < 0$ and $Y(\xi_1) = 0$, implying that $Y$ becomes negative.  This is clearly a contradiction, which implies that this subcase may not occur.

We now consider the second subcase; that is, suppose that $U(\xi_0) \geq c$.  Since $T>1$ on $(-\infty, \xi_0)$ and $T(\xi_0) = 1$, it follows that $\dot T(\xi_0) \leq 0$.  Using~\eqref{Planar}, we have
\[
	0 = \dot T(\xi_0)
		\geq - c T + UT + \frac{U}{2\rho}\left(2c - U\right)
		= -c + U + \frac{U}{2\rho}\left(2c - U\right).
\]
Recalling that $T>1$ on $(-\infty,\xi_1]$, we see that $U$ is decreasing on $(-\infty, \xi_0)$, which implies that $U < c+\rho + \sqrt{c^2+\rho^2}$.  Since $U(\xi_0)$ is between the roots of the polynomial (in $U$) in the right hand side above, it follows that the right hand side above is positive.  This is a contradiction, which finishes the proof. 

\end{proof}


\subsection{The minimum wave speed}\label{subsec:InviscidSpeed}

In this section we prove that for all $\rho > 0$ there exists a closed half-line of speeds with minimum wave speed $c_*(\rho) \geq 2$ of nonnegative traveling waves to \eqref{FKPP_Burgers} when $\nu = 0$. In addition, we obtain estimates on the behavior of $c_*(\rho)$ near $\rho =0$ and when $\rho \gg 1$.

\subsubsection{A half-line of admissible speeds}

We begin by establishing that the admissible speeds of traveling waves make up a half-line $[c_*(\rho),\infty)$.  The main step in this direction is to establish the following lemma, which states that if a wave exists with a particular speed $c_1$, then a wave exists with any speed $c>c_1$.   In the previous section, we showed that traveling waves exist for $c$ sufficiently large and do not exist for $c <2$.  Hence, the lemma below shows that the set of admissible speeds is has the form $(c_*(\rho),\infty)$ or $[c_*(\rho),\infty)$ for some $c_*(\rho) \geq 2$.

\begin{lem} \label{lem:MinSpeed} 
Fix any $\rho, \underline c>0$ and suppose that $(\underline T, \underline U) \in\mathcal{B}$ is a heteroclinic trajectory of \eqref{Planar} from $(1,u_0)$ to $(0,0)$ with $c = \underline c$. Then, for any $\overline c> \underline c$, there exists a heteroclinic trajectory $(\overline T, \overline U) \in \mathcal{B}$ with $c = \overline c$.
\end{lem}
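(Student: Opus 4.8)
The plan is to reduce the claim to a scalar comparison principle by passing to the variable $V$ from \eqref{Vdef}. On the invariant manifold \eqref{Conserved_V} we may solve for $U$ in terms of $V$, taking the root with $U<c$ that is relevant in $\mathcal B$, so that $c-U=\sqrt{c^2-2\rho V}$. Substituting into \eqref{ODE} converts \eqref{Planar} into the planar system
\[
  \dot T = V - T\sqrt{c^2-2\rho V}, \qquad \dot V = -T(1-T).
\]
I would then record three facts. First, $U\mapsto V=\tfrac{U}{2\rho}(2c-U)$ is a strictly increasing bijection of $[0,c]$ onto its range, so that exiting $\mathcal B$ through $U=0$ — the only possible exit by \Cref{prop:Equilibria}(4) — is equivalent to reaching $V=0$. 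Second, on all of $\mathcal B$ one has $\dot T<0$ and $\dot V<0$ by \Cref{prop:Equilibria}(3). Third, the backward limit $(1,u_0)$ corresponds to $(T,V)=(1,V_0)$ with $V_0=c-u_0=\sqrt{c^2+\rho^2}-\rho$, which is strictly increasing in $c$.

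Since $T$ is strictly monotone in $\mathcal B$, the unstable manifold of $(1,u_0)$ is, while it remains in $\mathcal B$, a graph $V=V_c(T)$ over $T$ solving the scalar ODE $\tfrac{dV}{dT}=\Psi(T,V;c)$, where $\Psi(T,V;c):=-T(1-T)/\bigl(V-T\sqrt{c^2-2\rho V}\bigr)$. The crux is the monotonicity $\partial_c\Psi<0$ throughout the interior of $\mathcal B$: the numerator of $\Psi$ is independent of $c$, the denominator equals $\dot T<0$, and $\partial_c\dot T=-Tc/\sqrt{c^2-2\rho V}<0$ for $T>0$, whence $\partial_c\Psi=T(1-T)\,\partial_c\dot T/\dot T^{\,2}<0$. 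That is, at a fixed point of the phase plane the slope of the trajectory strictly decreases as $c$ increases.

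With this in hand the result follows from a first-crossing argument. Writing $\underline V=V_{\underline c}$ and $\overline V=V_{\overline c}$, at $T=1$ we have $\overline V(1)=V_0(\overline c)>V_0(\underline c)=\underline V(1)$, so the faster profile starts strictly above the slower one. Let $(T_{\mathrm{exit}},1]$ be the maximal interval on which the faster unstable manifold remains in $\mathcal B$, so $\overline V$ is defined there. If $\overline V$ ever met $\underline V$, let $T^*$ be the largest such $T<1$; then $\overline V-\underline V>0$ on $(T^*,1]$ and vanishes at $T^*$, forcing $(\overline V-\underline V)'(T^*)\ge 0$. But this derivative equals $\Psi(T^*,V^*;\overline c)-\Psi(T^*,V^*;\underline c)<0$ by the monotonicity above, since the common value $V^*>0$ and $T^*\in(0,1)$ lie in the interior where $\partial_c\Psi<0$ — a contradiction. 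Hence $\overline V>\underline V>0$ on $(T_{\mathrm{exit}},1]$; as $\underline V>0$ all the way down to $T=0$, continuity rules out $\overline V(T_{\mathrm{exit}})=0$ with $T_{\mathrm{exit}}>0$, so $T_{\mathrm{exit}}=0$. Thus the faster unstable manifold never leaves $\mathcal B$, and by the monotonicity and trapping of \Cref{prop:Equilibria} it converges to the origin, producing the desired heteroclinic with $c=\overline c$.

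The main obstacle, and the reason the coordinate change is essential, is securing a definite sign for the $c$-derivative of the reduced slope. Carried out directly in the $(T,U)$ variables of \eqref{Planar}, the analogue $\partial_c(\dot U/\dot T)$ is a sum of terms of opposite sign — even after invoking the ordering $\rho T<U$ from \Cref{lem:Ordering} — and has no clear sign; it is precisely the passage to $V$, which concentrates all the $c$-dependence in the single term $\sqrt{c^2-2\rho V}$ of $\dot T$ while leaving $\dot V$ entirely $c$-independent, that makes the comparison transparent. A secondary point to handle with care is that both the box $\mathcal B$ and the backward equilibrium move with $c$; this is exactly why I record that $V_0(c)$ is increasing, which guarantees the two profiles are correctly ordered at the initial temperature $T=1$.
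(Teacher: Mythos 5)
Your proof is correct, and while it shares the paper's overall architecture --- order the two unstable manifolds at $\xi=-\infty$ via the monotonicity of the left state in $c$, then derive a contradiction at a first crossing from a slope comparison --- the pivotal computation is carried out by a genuinely different and cleaner route. The paper stays in the $(T,U)$ plane, where the crossing inequality becomes $F(\overline c)\leq F(\underline c)$ with $F(c)=(c-U)\bigl[(c-U)T-\tfrac{U}{2\rho}\left(2c-U\right)\bigr]$, and proving $F'>0$ requires two ingredients: the positivity of $-\dot{\underline T}(\xi_0)$ and the ordering of \Cref{lem:Ordering}, used there in the form $T>U/\rho$. Your passage to $(T,V)$ coordinates concentrates all of the $c$-dependence into the single term $\sqrt{c^2-2\rho V}$ of $\dot T$ while leaving $\dot V=-T(1-T)$ free of $c$, so the slope monotonicity $\partial_c\Psi<0$ is immediate and \Cref{lem:Ordering} is never needed. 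Incidentally, your side remark that the direct $(T,U)$ computation ``has no clear sign'' comes from quoting \Cref{lem:Ordering} as literally stated, $\rho T<U$; that statement contains a typo (its own proof, the ordering \eqref{Ordering}, and the paper's use of it all give $U<\rho T$), and with the corrected inequality the paper's direct computation does close. So the paper's route buys directness at the cost of an auxiliary lemma, while yours buys a transparent sign at the cost of a coordinate change.

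Two minor imprecisions, neither fatal. First, your claim that $\dot T<0$, hence $\partial_c\Psi<0$, holds ``throughout the interior of $\mathcal B$'' is an overstatement: near $T=0$ with $V$ bounded away from zero one has $\dot T\approx V>0$. This is harmless because you only evaluate $\partial_c\Psi$ at a crossing point, which lies on the slow heteroclinic where \Cref{prop:Equilibria}(3) gives $\dot T<0$ for $c=\underline c$, and then $\partial_c\dot T<0$ propagates this negativity to all $c\in[\underline c,\overline c]$. Second, since the map $U\mapsto V$ depends on $c$, the graphs $V_{\underline c}$ and $V_{\overline c}$ are drawn in speed-dependent coordinates, so a crossing in the $(T,V)$ plane is not literally a crossing in the $(T,U)$ plane; this too is harmless, because your argument is self-contained in $(T,V)$ and the conclusion $\overline V>0$ translates back to $\overline U>0$ through the fast wave's own coordinate map, which is exactly what \Cref{prop:Equilibria}(4) requires to prevent exit from $\mathcal{B}$.
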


\begin{proof}
	Let $(\overline T, \overline U)$ denote the component of the unstable manifold of $(1,u_0)$ that enters into $\mathcal{B}$. From our work in the previous subsection, as long as $(\overline T, \overline U)$ remains in the box $\mathcal{B}$, $(\overline T, \overline U) \to (0,0)$ as $\xi \to \infty$ and is, thus, a desired heteroclinic trajectory of \eqref{Planar}. In addition, Proposition~\ref{prop:Equilibria}(4) details that $(\overline T, \overline U)$ can only initially exit the box $\mathcal{B}$ through the face $\{(T,0) : 0 \leq T \leq 1\}$.  In summary, it is enough to show that $\overline U > 0$ for all $\xi\in\R$. In order to establish this, we show that $(\overline T, \overline U)$ cannot cross the trajectory $(\underline T, \underline U)$, which establishes the positivity of $\overline U$ via the positivity of $\underline U$.  We argue by contradiction, assuming that $(\overline T, \overline U)$ is not always above $(\underline T, \underline U)$.
	
	We begin by noting that
	\begin{equation}
		\overline U(-\infty) = \overline c + \rho - \sqrt{ \overline c^2 + \rho^2}
			> \underline c + \rho - \sqrt{\underline c^2 + \rho^2}
			= \underline U(-\infty).
	\end{equation}
	The inequality here is due to the fact that $c + \rho - \sqrt{ c^2 + \rho^2}$ is strictly increasing as a function of $c>0$. Hence, $(\overline U, \overline T)$ is initially above $(\underline T, \underline U)$ and so, in what follows we assume that there exists some $\xi_0\in\R$ such that
	\be
		(\overline T(\xi_0), \overline U(\xi_0)) = (\underline T(\xi_0), \underline U(\xi_0)) \in\mathcal{B}
	\ee
	and $(\overline T(\xi), \overline U(\xi))$ always lies above the curve $\{(\underline T(\xi), \underline U(\xi)) : \xi \in (-\infty, \xi_0)\}$ for all $\xi < \xi_0$.  Some simple vector calculus and the non-positivity of $\dot{\overline T}$, $\dot{\overline U}$, $\dot{\underline T}$, and $\dot{\underline U}$ implies that
	\be\label{e.c7162}
		\frac{\dot{\overline U}(\xi_0)}{\dot{\overline T}(\xi_0)}
			\geq \frac{\dot{\underline U}(\xi_0)}{\dot{\underline T}(\xi_0)}.
	\ee
	For the ease of notation, let us write 
	\be
		T = \overline T(\xi_0) = \underline T(\xi_0)
			\quad \text{ and }\quad
		U = \overline U(\xi_0) = \underline U(\xi_0).
	\ee
	Then, using~\eqref{Planar},~\eqref{e.c7162} becomes
	\be
		\frac{\rho T(1-T)}{(\overline c - U)\left[ (\overline c - U) T - \frac{U}{2\rho}\left(2\overline c - U\right)\right]}
			\geq \frac{\rho T(1-T)}{(\underline c - U)\left[ (\underline c - U) T - \frac{U}{2\rho}\left(2\underline c - U\right)\right]}.
	\ee
	which is equivalent to
	\be\label{e.c7163}
		(\overline c - U)\left[ (\overline c - U) T - \frac{U}{2\rho}\left(2\overline c - U\right)\right]
			\leq (\underline c - U)\left[ (\underline c - U) T - \frac{U}{2\rho}\left(2\underline c - U\right)\right].
	\ee
	We now show that this cannot hold and so the point $\xi_0$ cannot exist, completing the proof.
	
	Let us define 
	\begin{equation}
		F(c) = (c - U)\left[ ( c - U) T - \frac{U}{2\rho}\left(2c - U\right)\right].
	\end{equation}  
	Notice that proving $F$ is strictly increasing implies that \eqref{e.c7163} cannot be true, and therefore the proof is finished. To show this we differentiate $F$ with respect to $c$ to obtain
	\be
		\frac{\drm F}{\drm c}
			= \left[ (c - U) T - \frac{U}{2\rho}\left(2 c - U\right)\right]
				+ (c-U) \left[ T - \frac{U}{\rho} \right].
	\ee
	The second term above is positive since $c> U$ for any $c \geq \underline{c}$ and from Lemma~\ref{lem:Ordering} we have that $T > U/\rho$.  On the other hand, we have that the first term is positive when $c = \underline c$ since this is equal to $-\dot{\underline T}(\xi_0)$, which is positive from Proposition~\ref{prop:Equilibria}(3); that is
	\be
		\left[ (\underline c - U) T - \frac{U}{2\rho}\left(2\underline c - U\right)\right]
			= - \dot{\underline T}(\xi_0)
			> 0.
	\ee
	Thus, we conclude that $F' > 0$, which from our arguments above shows that \eqref{e.c7163} cannot be true, and therefore completes the proof.
\end{proof} 

We now show that there is a traveling wave with speed $c_*(\rho)$, which shows that the set of admissible speeds is of the form $[c_*(\rho),\infty)$.

\begin{cor}\label{cor:MinSpeed}
	For each $\rho > 0$ there exists $2 \leq c_*(\rho) < \infty$ such that \eqref{Planar} has a unique heteroclinic trajectory from $(1,u_0)$ to $(0,0)$ if and only if $c \geq c_*(\rho)$.
\end{cor}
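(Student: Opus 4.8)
The plan is to define $c_*(\rho)$ as the infimum of the admissible set
$S := \{c>0 : \text{the branch of the unstable manifold of } (1,u_0) \text{ entering } \mathcal{B} \text{ converges to } (0,0)\}$,
and then read off each clause of the corollary from the structural results already in hand. First, \Cref{lem:Het} gives $S\neq\emptyset$ and $c_*\leq\overline c<\infty$. Next, \Cref{prop:Equilibria}(1) shows that for $c<2$ the origin is a stable spiral, so any orbit limiting to it would have $T$ and $U$ oscillate through zero and leave $\mathcal{B}$; hence $S\subseteq[2,\infty)$ and $c_*\geq 2$. Finally, \Cref{lem:MinSpeed} shows that $S$ is up-closed (if $c\in S$ and $c'>c$ then $c'\in S$), so $S$ is an interval with left endpoint $c_*$; that is, $S=(c_*,\infty)$ or $S=[c_*,\infty)$.

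Uniqueness requires no new work. By \Cref{prop:Equilibria}(2) the unstable manifold of $(1,u_0)$ is one-dimensional, with exactly one branch whose tangent points into $\mathcal{B}$, and the discussion preceding that proposition rules out the other branch (entering $\mathcal{Q}$) connecting back to the origin. Since any heteroclinic must depart $(1,u_0)$ along its unstable manifold, the connecting orbit, when it exists, is precisely this $\mathcal{B}$-entering branch and is therefore unique.

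The only substantive point is closedness, i.e.\ that $S=[c_*,\infty)$ rather than $(c_*,\infty)$. I would argue by contradiction: suppose $c_*\notin S$, so the $\mathcal{B}$-entering branch at $c=c_*$ does not converge to $(0,0)$. By \Cref{prop:Equilibria}(4) it must then exit $\mathcal{B}$ through the face $U=0$, at some value $T_0$. Since the branch starts at $T=1$ with $\dot T<0$ and never reaches the origin, necessarily $T_0\in(0,1)$, and at that face \eqref{Planar} gives $\dot U = -\rho T_0(1-T_0)/c_* < 0$, so the crossing is transverse and points out of $\mathcal{B}$. The key observation is that transverse exit is an open condition in the parameter $c$: because $U\leq u_0<c$ throughout $\mathcal{B}$, the quantity $U-c$ is bounded away from zero and the vector field of \eqref{Planar} is smooth there, so the unstable manifold and the solutions of \eqref{Planar} depend continuously on $c$. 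Hence for all $c$ sufficiently close to $c_*$ the corresponding branch also crosses $U=0$ transversally near $(T_0,0)$ and so fails to reach the origin. This yields values $c>c_*$ with $c\notin S$, contradicting $(c_*,\infty)\subseteq S$; therefore $c_*\in S$ and $S=[c_*,\infty)$.

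I expect the last step to be the main obstacle: making the continuity-in-$c$ and transverse-exit argument precise, in particular tracking the exit point $(T_0,0)$ through the perturbation in $c$ and excluding a degenerate tangential grazing of the face $U=0$ (handled by the strict inequality $\dot U<0$ on the open face $0<T<1$) together with the corner behavior near $(0,0)$ and $(1,0)$. The remaining clauses are a matter of assembling \Cref{lem:Het}, \Cref{prop:Equilibria}, and \Cref{lem:MinSpeed}.
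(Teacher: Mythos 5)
Your proposal is correct and takes essentially the same route as the paper: assemble \Cref{lem:Het}, \Cref{prop:Equilibria}, and \Cref{lem:MinSpeed} to get the half-line structure with $2 \le c_*(\rho) < \infty$, then obtain closedness at $c = c_*(\rho)$ by contradiction using continuous dependence of the unstable-manifold branch on $c$. The paper phrases that last step as a limiting argument — the heteroclinics for $c > c_*(\rho)$ have nonnegative $U$, so their limit at $c_*(\rho)$ cannot have $U$ becoming negative — while you propagate the transverse exit through $U=0$ at $c_*(\rho)$ to nearby speeds; this is the same continuity argument read in the contrapositive direction, and your transversality observation is precisely what makes the paper's ``simple limiting argument'' rigorous.
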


\begin{proof}
	As discussed above, the arguments in \Cref{lem:MinSpeed}, \Cref{lem:Het}, and \Cref{prop:Equilibria}(1) combine to give that the set of admissible speeds is either $(c_*(\rho), \infty)$ or $[c_*(\rho),\infty)$ for some $c_*(\rho) \geq 2$.  Hence, it is enough to show that there is a traveling wave with speed $c_*(\rho)$ in order to conclude.
But, if we assume that at $c = c_*(\rho)$ there is not a heteroclinic trajectory, Proposition~\ref{prop:Equilibria} implies that the $U$-component unstable manifold of $(1,u_0)$ must become negative at some point. A simple limiting argument using the heteroclinic trajectories for any $c > c_*(\rho)$ (for which the $U$-components are non-negative) gives that this cannot be possible, and hence the result of the corollary follows. 
\end{proof}

\subsubsection{The behavior of the minimal speed for small and large $\rho$}

Having now characterized the set of admissible traveling wave speeds of
\eqref{FKPP_Burgers} as a closed half-line, we now seek to prove the bounds on the minimal speed $c_*(\rho)$ for small and large $\rho>0$. We begin with the following result pertaining to small $\rho > 0$.

\begin{lem}\label{lem:SmallRho} 
	If $0 < \rho \leq 1$ and $c \geq 2$, then system \eqref{Planar} has a heteroclinic orbit from $(1,u_0)$ to $(0,0)$ which remains in $\mathcal{B}$ for all $\xi \in \R$. As a consequence, $c_*(\rho) = 2$ for all $\rho \in (0,1]$.
\end{lem}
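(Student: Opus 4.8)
The plan is to reuse the trapping curve \eqref{Boundary}, namely $U = c - \sqrt{c^2 - c\rho T}$, that was introduced in the proof of \Cref{lem:Het}, and to show that under the hypotheses $\rho \leq 1$ and $c \geq 2$ it again forms a trapping boundary for the component of the unstable manifold of $(1,u_0)$ entering $\mathcal{B}$. Once this is established, the qualitative conclusion follows exactly as in \Cref{lem:Het}: the curve passes through the origin and satisfies $c - \sqrt{c^2 - c\rho T} > 0$ for $T \in (0,1]$, so a trajectory trapped above it keeps $U > 0$ as long as $T > 0$. By \Cref{prop:Equilibria}(4) the manifold can leave $\mathcal{B}$ only across $U = 0$, hence it remains in $\mathcal{B}$ for all $\xi$, and by the monotonicity in \Cref{prop:Equilibria}(3) it must converge to $(0,0)$, yielding the desired heteroclinic orbit.

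First I would check that the curve lies in $\mathcal{B}$. The computation in \Cref{lem:Het} shows that its endpoint at $T = 1$ lies below $u_0$ precisely when $\rho < 3c/4$; since $\rho \leq 1 < 3/2 \leq 3c/4$ for every $c \geq 2$, this holds throughout the regime of interest, and the manifold starts above the curve exactly as before.

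The crux is verifying the trapping inequality \eqref{TrapCondition},
\[
	h(T) := \frac{c^2}{4} - \frac{c}{2}\sqrt{c^2 - c\rho T} + (1 - T) \leq 0 \qquad \text{for all } T \in [0,1].
\]
The main obstacle is that the crude bound used in \Cref{lem:Het}, which replaces $\sqrt{c^2 - c\rho T}$ by its minimum $\sqrt{c^2 - c\rho}$, only produces a negative value when $c$ is large and fails near the threshold $c = 2$. To handle the entire range $c \geq 2$ uniformly I would instead exploit convexity: a direct computation gives $h''(T) = \tfrac{c^3\rho^2}{8}(c^2 - c\rho T)^{-3/2} > 0$, so $h$ is convex on $[0,1]$ and its maximum is attained at an endpoint. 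At $T = 0$ one finds $h(0) = 1 - c^2/4 \leq 0$ precisely because $c \geq 2$, while at $T = 1$ the inequality $h(1) = \tfrac{c^2}{4} - \tfrac{c}{2}\sqrt{c^2 - c\rho} \leq 0$ is equivalent to $\rho \leq 3c/4$, which again holds. Hence $\max_{[0,1]} h = \max\{h(0), h(1)\} \leq 0$, confirming \eqref{TrapCondition}.

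Finally, for the consequence, the lower bound $c_*(\rho) \geq 2$ is already known from \Cref{thm:MinSpeed1}(1). The existence of a heteroclinic orbit for every $c \geq 2$ shows that all such speeds are admissible, so by \Cref{cor:MinSpeed} we obtain $c_*(\rho) \leq 2$; combining the two bounds gives $c_*(\rho) = 2$ for all $\rho \in (0,1]$.
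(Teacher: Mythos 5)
Your proof is correct, but it takes a genuinely different route from the paper's. The paper does \emph{not} reuse the curve from \Cref{lem:Het}: it switches to the curve $U = 2-\sqrt{4-2\rho T}$ (the lower root of $\tfrac{U}{2\rho}(4-U)=T$, with coefficient $1$ rather than $\tfrac{c}{2}$ in front of $T$), works only at the single speed $c=2$, and verifies trapping via the ordering \Cref{lem:Ordering}: on that curve the crossing derivative equals $T(T-U)\geq (1-\rho)T^2>0$ since $U\leq \rho T$ and $\rho<1$. It then invokes \Cref{lem:MinSpeed} to cover all $c>2$ and a limiting argument to reach $\rho=1$. Your argument instead keeps the \Cref{lem:Het} curve for every $c\geq 2$ and verifies \eqref{TrapCondition} by convexity of $h$ plus endpoint evaluation; this is self-contained in a way the paper's proof is not — it needs neither \Cref{lem:Ordering}, nor \Cref{lem:MinSpeed}, nor the limiting step at $\rho=1$. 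It even proves something stronger: your endpoint conditions are $c\geq 2$ (for $h(0)\leq 0$) and $\rho\leq 3c/4$ (for $h(1)\leq 0$), so the identical argument gives $c_*(\rho)=2$ for all $\rho\in(0,\tfrac{3}{2}]$, which would sharpen the lower bound on $\rho_0$ in \Cref{thm:MinSpeed1}(2) from $1$ to $\tfrac{3}{2}$. One presentational point: at the corner $T=0$ with $c=2$ you only get $h(0)=0$, so the trapping inequality is non-strict there; but that point is the origin itself, which the trajectory cannot reach at finite $\xi$, and for $T\in(0,1]$ convexity gives the strict bound $h(T)\leq (1-T)h(0)+Th(1)\leq T\,h(1)<0$, so the first-crossing contradiction goes through. (The paper's own proof glosses over the same $T=0$ degeneracy in \eqref{SmallRhoBndry}, so this is at the same level of rigor.) Also, for the lower bound $c_*(\rho)\geq 2$ it is cleaner to cite \Cref{prop:Equilibria}(1) directly, as the paper does, rather than \Cref{thm:MinSpeed1}(1), since the latter is partly a restatement of what is being proved.
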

\begin{proof}
We argue as in the proof of Lemma~\ref{lem:Het} but now restrict ourselves to the small $\rho > 0$ parameter regime. Again we consider the trapping curve (cf.~\eqref{Boundary})
	\be\label{Boundary1}
		U = 2 - \sqrt{4 - 2\rho T},
			\qquad T \in [0,1]
	\ee
	and show that for $\rho \in(0,1)$ this curve forms a trapping boundary with $c = 2$. In view of \Cref{prop:Equilibria}, which implies that no traveling wave solutions exist for $0 < c < 2$, and \Cref{lem:MinSpeed}, this shows that $c_*(\rho) = 2$ for all $\rho \in (0,1)$.
	
	From the arguments in \Cref{lem:Het}, we see that, as long as $\rho < 3/2$ (recall that $c = 2$, here), the component of the unstable manifold of $(1,u_0)$ that enters into $\mathcal{B}$ initially lies above the curve~\eqref{Boundary1}, that is
	\be
		U(-\infty)
			= u_0
			> 2 - \sqrt{ 4 - 2 \rho}
			= 2 - \sqrt{4 - 2 \rho T(-\infty)}.
	\ee
	It therefore remains to show that the solution $(T,U)$ of~\eqref{Planar} cannot cross the curve~\eqref{Boundary1} in $\mathcal{B}$.  Recall that due to \Cref{prop:Equilibria}(4), this shows that $(T,U)$ cannot exit the box $\mathcal{B}$. Hence, from the monotonicity of $(T,U)$, in $\mathcal{B}$, it follows that $(T,U)$ is a heteroclinic connection between $(1,u_0)$ and $(0,0)$ with $c=2$, which would finish the proof.  We now establish this.

	First note that~\eqref{Boundary1} is the lower solution of
	\be
		\frac{U}{2\rho}\left(4 - U\right) - T = 0.
	\ee
	Hence, if $(T,U)$ were to cross the curve~\eqref{Boundary1} at some point $\xi_0 \in \R$, it must be that $\frac{U(\xi)}{2\rho}\left(4 - U(\xi)\right) - T(\xi)$ goes from positive to negative at $\xi_0$.  This yields that, at $\xi_0$,
	\be\label{e.crossing}
		\frac{\drm}{\drm\xi}\left( \frac{U}{2\rho}\left(4 - U\right) - T\right)
			\leq 0.
	\ee
	We show that this inequality cannot hold.  Indeed, if $\xi_0$ exists then
	\be\label{SmallRhoBndry}
	\begin{split}
		0 \geq \frac{\drm}{\drm\xi}\left( \frac{U}{2\rho}\left(4 - U\right) - T\right)
			&= \frac{1}{\rho}\left(2 - U\right)\dot U - \dot T
			= -T(1-T) + T(2-U) - \frac{U}{2\rho}\left(4-U\right)\\
			&= T \left( T - U\right).
	\end{split}
	\ee
	In the last equality, we used that $(U/2\rho)(4-U) = T$ at $\xi_0$. By \Cref{lem:Ordering}, we have that $U \leq \rho T$. Putting this into \eqref{SmallRhoBndry}, we find
	\[
		\frac{\drm}{\drm\xi}\left( \frac{U}{2\rho}\left(4 - U\right) - T\right) = T \left( T - U\right)
			\geq T(T - \rho T) > 0,
	\]
	since $\rho < 1$, which concludes the proof in this regime. The case $\rho=1$ can be obtained through a simple limiting argument that we omit. This finished the proof.
\end{proof} 

We now move to the large $\rho > 0$ parameter regime to show that $c_0(\rho) \sim \rho^{1/3}$ as $\rho \to \infty$. In what follows, Lemma~\ref{lem:LargeRho} proves that an upper bound on $c_0(\rho)$ in terms of $\rho^{1/3}$ exists and Lemma~\ref{lem:LargeRho2} complements this result with a lower bound that is of the form $\rho^{1/3}$ as well. These results combined prove Theorem~\ref{thm:MinSpeed1}(3). 

\begin{lem}\label{lem:LargeRho} 
	Fix any $\sigma > \sqrt 3$.  There exists $\rho_0 > 0$ such that for all $\rho \geq \rho_0$, system \eqref{Planar} has a heteroclinic orbit from $(1,u_0)$ to $(0,0)$ which remains in $\mathcal{B}$ for all $\xi \in \R$ and $c \geq \sigma_1\rho^{1/3}$. As a consequence,
	\[
		\limsup_{\rho\to\infty} \frac{c_*(\rho)}{\rho^{1/3}} \leq \sqrt 3.
	\]
\end{lem}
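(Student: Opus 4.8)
The plan is to adapt the trapping-boundary construction of \Cref{lem:Het} to the regime $c\sim\rho^{1/3}$. The curve used there, $U = c - \sqrt{c^2 - c\rho T}$ (equivalently $V = \tfrac{c}{2}T$), only lies in $\mathcal{B}$ when $c\gtrsim\rho$ and is therefore useless once $c$ is of order $\rho^{1/3}$; the fix is to keep the \emph{same shape} of trapping function but replace the slope $c/2$ by a far smaller value of order $\rho^{-1/3}$. Concretely, I would work with
\[
	\Phi := V - \tfrac{c^2}{3\rho}\,T = \tfrac{U}{2\rho}(2c-U) - \tfrac{c^2}{3\rho}\,T ,
\]
whose zero set is the curve $U = c - \sqrt{c^2 - \tfrac{2c^2}{3}T}$, and show that for $c = \sigma\rho^{1/3}$ with $\sigma > \sqrt 3$ and $\rho$ large this curve is a trapping boundary for the unstable manifold of $(1,u_0)$. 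As in \Cref{lem:Het}, once the curve is shown to lie in $\mathcal{B}$, to have the unstable manifold above it initially, and to be non-crossing, the monotonicity from \Cref{prop:Equilibria}(3) together with \Cref{prop:Equilibria}(4) force the unstable manifold to remain trapped in $\mathcal{B}$ and hence to limit to the origin, producing the heteroclinic orbit.

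First I would dispatch the geometric set-up. Along the curve, $U = c\big(1 - \sqrt{1 - 2T/3}\big)$ ranges over $[0,\,c(1 - 1/\sqrt 3)]$ as $T$ runs over $[0,1]$, and since $u_0 = c + \rho - \sqrt{c^2+\rho^2} = c\big(1 + o(1)\big)$ for large $\rho$ while $c(1-1/\sqrt3)\approx 0.42\,c$, the curve lies in $\mathcal{B}$. The unstable manifold starts strictly above it because at $T=1$ the equilibrium value $V_* = \sqrt{c^2+\rho^2}-\rho = \tfrac{c^2}{2\rho}\big(1+o(1)\big)$ exceeds the curve value $\tfrac{c^2}{3\rho}$; the strict gap together with continuity (and the linearization in \Cref{prop:Equilibria}(2)) ensures the manifold enters $\{\Phi > 0\}$.

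The crux is the non-crossing estimate. Repeating the computation \eqref{e.c9101} of \Cref{lem:Het} verbatim, but with slope $\alpha := \tfrac{c^2}{3\rho}$ in place of $c/2$, gives on the curve
\[
	\dot\Phi = T\big[\,\alpha\sqrt{c^2 - 2\rho\alpha T} - \alpha^2 - (1-T)\,\big],
\]
so it suffices to prove $\alpha\sqrt{c^2 - 2\rho\alpha T} \geq (1-T) + \alpha^2$ for all $T\in[0,1]$. I would use exactly the crude bounds employed in \Cref{lem:Het}: replace $T$ by its worst case $T=1$ inside the square root and bound $1-T\leq 1$, reducing the claim to $\alpha\sqrt{c^2 - 2\rho\alpha}\geq 1 + \alpha^2$. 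The value $\alpha = \tfrac{c^2}{3\rho}$ is precisely the maximizer of $\alpha\sqrt{c^2 - 2\rho\alpha}$, yielding $\alpha\sqrt{c^2 - 2\rho\alpha} = \tfrac{c^3}{3\sqrt 3\,\rho}$. Substituting $c=\sigma\rho^{1/3}$ turns the requirement into $\tfrac{\sigma^3}{3\sqrt3} \geq 1 + \tfrac{\sigma^4}{9}\rho^{-2/3}$; since $\sigma>\sqrt3$ forces $\sigma^3>3\sqrt3$, the left side strictly exceeds $1$ while the error term vanishes, so the inequality holds for all $\rho\geq\rho_0(\sigma)$. This establishes the trapping boundary and hence a heteroclinic orbit at $c=\sigma\rho^{1/3}$; \Cref{lem:MinSpeed} upgrades existence to every $c\geq\sigma\rho^{1/3}$, giving $c_*(\rho)\leq\sigma\rho^{1/3}$, and letting $\sigma\downarrow\sqrt3$ yields the claimed $\limsup$.

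The main obstacle is guessing the correct scaling of the trapping curve: the effective slope must be taken of order $\rho^{-1/3}$ (here $\tfrac{c^2}{3\rho}$), which is wildly different from the order-$\rho^{1/3}$ slope of \Cref{lem:Het}, and the prefactor $1/3$ is dictated by optimizing the crude square-root bound. A secondary, purely bookkeeping difficulty is making the $o(1)$ corrections (in $V_*$, in $u_0$, and in the $\alpha^2$ term) explicit so as to pin down a valid $\rho_0$. I would also flag that the constant $\sqrt3$ is an artifact of replacing $T$ by its worst case inside the radical; retaining the full $T$-dependence and re-optimizing over $\alpha$ already improves it, consistent with the paper's remark that the bound is not sharp.
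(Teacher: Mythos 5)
Your proposal is correct and takes essentially the same approach as the paper: a straight-line trapping curve $V=\alpha T$ with slope $\alpha=\mathcal{O}(\rho^{-1/3})$, shown to lie in $\mathcal{B}$ below the equilibrium and verified to be non-crossing via exactly the derivative computation \eqref{e.c9101} of \Cref{lem:Het}. The only difference is the prefactor—you optimize to $\alpha=c^2/(3\rho)=(\sigma^2/3)\rho^{-1/3}$, while the paper simply takes $\alpha=\rho^{-1/3}$—and since these coincide precisely at the critical value $\sigma=\sqrt3$, both choices yield the identical threshold and the same $\limsup$ bound.
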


\begin{proof}
	We proceed similarly as in the proof of Lemmas~\ref{lem:Het} and~\ref{lem:SmallRho} by constructing an appropriate trapping region.  We now take $c = \sigma\rho^\frac{1}{3}$ with a some $\sigma > \sqrt{3}$ and consider the curve 
	\begin{equation}\label{Region1_inv}
		U = \sigma\rho^\frac{1}{3} - \rho^\frac{1}{3}\sqrt{\sigma^2 - 2T}, \quad T \in [0,1], 	
	\end{equation}
	which can equivalently be written as the lower branch of the curve 
	\be
		\frac{U}{2\rho}(2\sigma\rho^\frac{1}{3} - U) - \rho^{-\frac{1}{3}} T = 0.
	\ee
	Note the subtle difference in the curves~\eqref{Boundary} (see also~\eqref{Boundary1}) and~\eqref{Region1_inv}: crucially, the coefficient in front of the $T$ term changes. We show that when $\rho \gg 1$ the curve \eqref{Region1_inv} forms a trapping boundary inside of $\mathcal{B}$. 
	
	We first confirm that this curve lies in $\mathcal{B}$ for all $T \in [0,1]$ by checking that $\sigma\rho^\frac{1}{3} - \rho^\frac{1}{3}\sqrt{(\sigma^2 - 2)} < u_0$. We have
	\begin{equation}\label{LargeRho1}
		\begin{split}
			\sigma\rho^\frac{1}{3} - \rho^\frac{1}{3}\sqrt{(\sigma^2 - 2)} - u_0  &= -\rho - \rho^\frac{1}{3}\sqrt{(\sigma^2 - 2)} + \sqrt{\sigma^2\rho^\frac{2}{3} + \rho^2} \\
			&= -\rho - \rho^\frac{1}{3}\sqrt{(\sigma^2 - 2)} + \rho\sqrt{1 + \sigma^2\rho^{-\frac{4}{3}}} \\
			&= -\rho - \rho^\frac{1}{3}\sqrt{(\sigma^2 - 2)} + \rho + \frac{\sigma^2}{2\rho^\frac{1}{3}}  + \mathcal{O}\bigg(\frac{1}{\rho^\frac{5}{3}}\bigg) \\
			&= - \rho^\frac{1}{3}\sqrt{(\sigma^2 - 2)} + \frac{\sigma^2}{2\rho^\frac{1}{3}}  + \mathcal{O}\bigg(\frac{1}{\rho^\frac{5}{3}}\bigg)
		\end{split}
	\end{equation}
	Since $\sigma > \sqrt{3}$ is fixed, it follows that for sufficiently large $\rho > 0$ we have that \eqref{LargeRho1} is negative, verifying that the curve \eqref{Region1_inv} lies in $\mathcal{B}$ for all $T \in[0,1]$.
	
	We now verify that \eqref{Region1_inv} is a trapping boundary in $\mathcal{B}$ when $\rho$ is taken large enough. Following the proof of Lemma~\ref{lem:Het} (see~\eqref{e.c9101}), it follows that the trapping boundary condition is equivalent to having 
	\begin{equation}
		\frac{1}{\rho^\frac{2}{3}} - \sqrt{(\sigma^2 - 2T)} + (1 - T) \leq 0
	\end{equation}  
	for all $T \in [0,1]$. Then, using that $0 \leq T \leq 1$, we find
	\begin{equation}
		\frac{1}{\rho^\frac{2}{3}} - \sqrt{(\sigma^2 - 2T)} + (1 - T) \leq \frac{1}{\rho^\frac{2}{3}} - \sqrt{(\sigma^2 - 2)} + 1.
	\end{equation}  
	As $\sigma >\sqrt{3}$, the right hand side above is negative if $\rho$ is sufficiently large. This concludes the proof.
\end{proof} 

\begin{lem}\label{lem:LargeRho2} 
	The system \eqref{Planar} does not have a heteroclinic orbit from $(1,u_0)$ to $(0,0)$ that remains in $\mathcal{B}$ for all $\xi \in \R$ for any $c < (3\rho / 2)^{1/3}$.  As a consequence,
	\[
		c_*(\rho) \geq \min\left\{2, \left(\frac{3}{2}\right)^{1/3} \rho^{1/3}\right\}
			\quad \text{ for all }\rho \geq 0.
	\]
\end{lem}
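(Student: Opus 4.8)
The plan is to distill the system~\eqref{Planar} down to a single exact scalar identity coming from the $U$-equation, and then convert it — using the monotonicity of any heteroclinic inside $\mathcal{B}$ — into a cubic lower bound on $c$. The starting observation is that, along any trajectory of~\eqref{Planar}, the $U$-equation can be rewritten as the exact relation
\[
	\frac{\drm}{\drm\xi}\left[\frac{(c-U)^2}{2}\right] = -(c-U)\dot U = \rho\, T(1-T),
\]
where I have used $\dot U = \rho T(1-T)/(U-c)$. The right-hand side is precisely the reaction term, so this is the ODE analogue of a ``bulk-burning'' identity and is the only place the coupling $\rho$ enters. Assuming a heteroclinic orbit from $(1,u_0)$ to $(0,0)$ lying in $\mathcal{B}$ exists, Proposition~\ref{prop:Equilibria}(3) gives $\dot T<0$ strictly in the interior, so $T$ is a strictly decreasing diffeomorphism of $\xi$ onto $(0,1)$ and I may legitimately regard $U$ as a smooth function $U(T)$ with $U(1)=u_0$ and $U(0)=0$.

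Next I would reparametrize the identity by $T$. Setting $G(T):=\tfrac13\bigl(c-U(T)\bigr)^3$ and applying the chain rule to the identity above gives
\[
	\frac{\drm G}{\drm T} = (c-U)\,\frac{\rho\,T(1-T)}{\dot T}.
\]
The crucial algebraic input is that, by the first line of~\eqref{Planar} together with~\eqref{Conserved_V}, one has $\dot T = -(c-U)T + V$ with $V=\tfrac{U}{2\rho}(2c-U)\ge 0$ throughout $\mathcal{B}$ (since $0\le U\le u_0<c$ there). Hence $-\dot T\le (c-U)T$, and because $\dot T<0$ this turns the displayed equality into the differential inequality $\drm G/\drm T \le -\rho(1-T)$ for all $T\in(0,1)$. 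Integrating from $T=0$ to $T=1$ and inserting the boundary values of $U$ yields
\[
	\frac{(c-u_0)^3-c^3}{3} \le -\frac{\rho}{2},
\]
so that $c^3 \ge c^3-(c-u_0)^3 \ge \tfrac{3\rho}{2}$, i.e.\ $c\ge(3\rho/2)^{1/3}$. This contradicts $c<(3\rho/2)^{1/3}$ and establishes the nonexistence claim. The bound on $c_*(\rho)$ then follows immediately: the argument shows $c_*(\rho)\ge(3/2)^{1/3}\rho^{1/3}$, which together with $c_*(\rho)\ge 2$ from Proposition~\ref{prop:Equilibria}(1) and the half-line structure of Corollary~\ref{cor:MinSpeed} gives the stated lower bound (in fact the larger of the two).

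The main obstacle is the term $V$ in $\dot T$ — exactly the term responsible for the failure of the naive scaling heuristic discussed after Theorem~\ref{thm:MinSpeed1}. Discarding it via $-\dot T\le (c-U)T$ is the delicate step, and the point is that it must be done with the correct sign so that the loss weakens the lower bound rather than invalidating it; the sign is guaranteed structurally by $V\ge 0$, which holds simply because the orbit sits inside $\mathcal{B}$, not through any fine estimate. A secondary technical issue is justifying the reparametrization and the endpoint evaluations $G(0)=c^3/3$ and $G(1)=(c-u_0)^3/3$: although $\drm G/\drm T$ looks singular at $T=1$, where $\dot T\to 0$, it is dominated by the integrable majorant $-\rho(1-T)$, so the integration over $(0,1)$ is valid after a routine limiting argument near the endpoints.
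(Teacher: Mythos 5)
Your proof is correct and is essentially the paper's argument recast in different variables: the paper integrates the inequality $(1-T)V \geq 0$ over $\xi$ and then computes $\int (c-U)T(1-T)\,\drm\xi = \tfrac{1}{3\rho}\left(c^3-(c-u_0)^3\right)$ exactly via the $U$-equation, which is precisely your differential inequality $\drm G/\drm T \leq -\rho(1-T)$ after the change of variables $\xi \mapsto T$. The two key ingredients are identical in both versions --- the structural fact $V \geq 0$ in $\mathcal{B}$ (equivalently $-\dot T \leq (c-U)T$) and the exact antiderivative $(c-U)^3/3$ --- and they produce the same final bound $c^3 - (c-u_0)^3 \geq 3\rho/2$.
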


\begin{proof}
	We prove this by assuming that we have a heteroclinic orbit from $(1,u_0)$ to $(0,0)$ remaining in $\mathcal{B}$ and obtain a lower bound for $c$. The lower bound of $2$ follows from \Cref{prop:Equilibria}, so we need only obtain the $\mathcal{O}(\rho^{1/3})$ lower bound claimed above. We begin with the fact that $V \geq 0$ (see~\eqref{Conserved_V}) and $1 - T \geq 0$ since $(T,U) \in \mathcal{B}$, by assumption.  Hence,
	\[
		0 \leq \int (1-T) V d \xi,
	\]
	where the limits of integration are always $-\infty$ and $\infty$, respectively. On the other hand, due to~\eqref{Vdef}, we can re-write this as
	\[
		0
			\leq \int (1- T) \left[ \dot T + (c-U) T\right] d\xi
			\leq \int \frac{d}{d\xi} \left(T - \frac{T^2}{2}\right) d \xi  + \int (c-U) T(1-T) d\xi.
	\]
	The first term on the right is readily computed to be $-1/2$.  Rearranging the above yields
	\be\label{e.c831}
		\frac{1}{2}
			\leq \int (c-U) T(1-T) d\xi.
	\ee
	
	We now aim to compute the right hand side of~\eqref{e.c831}.  Using~\eqref{Planar}, we find
	\[
		\int (c-U) T(1-T) d\xi
			= -\frac{1}{\rho} \int (c-U)^2 \dot U d \xi
			= \frac{1}{3\rho} \int \frac{d}{d \xi} (c-U)^3 d\xi
			= \frac{1}{3\rho} \left( c^3 - (c-u_0)^3 \right).
	\]
	Hence,~\eqref{e.c831} becomes
	\[
		\frac{3\rho}{2}
			\leq c^3 - (c - u_0)^3
			\leq c^3.
	\]
	Taking the cubed root of both sides yields the claim.
\end{proof} 


\section{Proof of Theorem~\ref{thm:Viscous}}\label{sec:Viscous} 

In this section, we consider the existence and further properties of traveling wave solutions to \eqref{FKPP_Burgers} with $\nu > 0$, the aforementioned viscous equation. As in the previous section, we consider $\rho > 0$ and substitute the ansatz $T(x,t) = T(x-ct)$ and $u(x,t) = U(x-ct)$ into \eqref{FKPP_Burgers} to arrive at 
\begin{equation}\label{TWave2}
	\begin{split}
		-c\frac{\drm T}{\drm \xi} - \frac{\drm^2 T}{\drm \xi^2} + \frac{\drm}{\drm \xi}(UT) &= T(1-T), \\
		-c\frac{\drm U}{\drm \xi} - \nu\frac{\drm^2 U}{\drm \xi^2}  + U\frac{\drm U}{\drm \xi} &= \rho T(1 - T),
	\end{split}
\end{equation}
where as in the previous section $\xi := x-ct$ is the traveling wave independent variable. Let us define the $\xi$-dependent quantities
\be
	\begin{split}
		V &:= cT + \frac{\drm T}{\drm \xi} - UT, \\ 
		W &:= cU + \nu\frac{\drm U}{\drm \xi} - \frac{1}{2}U^2,
	\end{split}
\ee
to transform \eqref{TWave2} to the first order system of equations
\begin{equation}\label{ODE2}
	\begin{split}
		\dot{T} &= -cT + UT + V, \\
		\dot{U} &= \frac{1}{\nu}\bigg[-cU + \frac{1}{2}U^2 + W\bigg], \\
		\dot{V} &= T(T-1), \\
		\dot{W} &= \rho T(T-1).
	\end{split}
\end{equation}
In system \eqref{ODE2} we have that a traveling wave solution of \eqref{FKPP_Burgers} as described in Theorem~\ref{thm:Viscous} corresponds to a heteroclinic trajectory connecting the equilibrium $(1,u_0,c-u_0,\rho(c-u_0))$ to the origin. 

Much like system~\eqref{ODE}, system~\eqref{ODE2} has a conserved quantity. Precisely, along trajectories of \eqref{ODE2} we have
\be
	\rho V - W = C,
\ee
for a fixed constant $C\in\R$ and all $\xi\in\R$. Since we are searching for solutions which asymptotically approach the origin, we may restrict ourselves to $C = 0$ and reduce the dimension of the system \eqref{ODE2} by taking $W = \rho V$. In this case \eqref{ODE2} becomes  
\begin{equation}\label{ODE3D}
	\begin{split}
		\dot{T} &= -cT + UT + V, \\
		\dot{U} &= \frac{1}{\nu}\bigg[-cU + \frac{1}{2}U^2 + \rho V\bigg], \\
		\dot{V} &= T(T-1), \\
	\end{split}
\end{equation}
which is the system that forms the basis for inspection in this section.

This section is broken down into three subsections. The first subsection, \S~\ref{subsec:ViscousExistence}, parallels \S~\ref{subsec:InviscidExistence} by providing the existence of traveling wave solutions of \eqref{FKPP_Burgers} that travel at fast speeds, i.e. $c \gg 1$. This is again achieved by proving the existence of heteroclinic orbits to \eqref{ODE3D} for all $\rho > 0$ that connect $(1,u_0,c-u_0)$ to the origin. Our analysis notably lacks the existence of a minimum wave speed, and therefore the we focus on the existence of traveling wave solutions in the two distinct parameter regimes: $\rho > 0$ small and large. The parameter regime of small $\rho > 0$ is handled in \S~\ref{subsec:ViscousSmallRho} where we prove the points (2) and (3) of Theorem~\ref{thm:Viscous}. In \S~\ref{subsec:ViscousLargeRho} we turn to the large $\rho > 0$ parameter regime to prove Theorem~\ref{thm:Viscous}(4). In total, the results in the following subsections prove the entirety of Theorem~\ref{thm:Viscous} as presented in Section~\ref{sec:Results}.


\subsection{Existence of traveling waves}\label{subsec:ViscousExistence}

The work of this section parallels the work of Subsection~\ref{subsec:InviscidExistence} applied to system~\eqref{ODE3D}. In particular, we show that for any $\nu,\rho > 0$ and sufficiently large $c > 0$, system~\eqref{ODE3D} has a heteroclinic trajectory corresponding to a traveling wave solution of \eqref{FKPP_Burgers}. We are interested in heteroclinic orbits that belong to the cube $\mathcal{C}$, given by
\[
	\mathcal{C} = \{(T,U,V):\ 0 \leq T \leq 1,\ 0 \leq U \leq u_0,\ 0 \leq V \leq c-u_0\}.
\]
We present the following lemma outlining the basic properties of the equilibria that make up the desired heteroclinic trajectories and the dynamics in the cube $\mathcal{C}$.

\begin{prop}\label{prop:Equilibria2} 
	For each $c,\nu,\rho > 0$ we have the following:
	\begin{enumerate}[(1)]
		\item The origin of \eqref{ODE3D} is locally asymptotically stable for all $c > 0$. In particular, when $c \in (0,2)$ the linearization of \eqref{ODE3D} about the origin has a single real and negative eigenvalue along with a pair of complex-conjugate eigenvalues with negative real part, and when $c \geq 2$ all eigenvalues are real and negative.
		\item The equilibrium $(1,u_0,c-u_0)$ of \eqref{ODE3D} is hyperbolic with a one-dimensional unstable manifold whose tangent vector at this equilibrium points to the interior of $\mathcal{C}$. 
		\item The unstable manifold of $(1,u_0,c-u_0)$ is necessarily monotone while it remains in $\mathcal{C}$. That is, $\dot{T} < 0$, $\dot{U} < 0$, and $\dot{V} < 0$ everywhere along the trajectory in $\mathcal{C}$.
		\item The unstable manifold of $(1,u_0,c-u_0)$ can only leave $\mathcal{C}$ by crossing $V = 0$.
		\item If the unstable manifold of $(1,u_0, c-u_0)$ leaves $\mathcal{C}$ by crossing $V=0$ and connects to the origin, it must be that $T(\xi)<0$ for some $\xi$.
	\end{enumerate}
\end{prop}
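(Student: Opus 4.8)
The plan is to prove the contrapositive: assuming the trajectory stays in $\{T\ge 0\}$ for all $\xi$, I would show it cannot converge to the origin, which forces $T$ to become negative somewhere. Let $\xi_1$ denote the value at which the unstable manifold crosses the face $V=0$. Since the trajectory lies in $\mathcal C$ for $\xi<\xi_1$, part (3) of the proposition gives $\dot T<0$ there, so $T(-\infty)=1$ implies $T(\xi_1)\in[0,1)$; moreover, because the trajectory crosses $V=0$ transversally out of the cube, we have $V(\xi_1)=0$ and $V<0$ for $\xi$ slightly larger than $\xi_1$. (The degenerate corner $T(\xi_1)=0$ is actually the easiest case: just past $\xi_1$ one has $V<0$, $T\ge 0$, $U<c$, so $\dot T=(U-c)T+V<0$ pushes $T$ immediately into $\{T<0\}$.)

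Next I would establish an invariant-region statement for $\xi>\xi_1$, namely that $U<c$ and $V<0$ persist. The key facts, both read off directly from \eqref{ODE3D}, are: (i) on $\{U=c\}$ one computes $\dot U=\tfrac{1}{\nu}\bigl(-\tfrac12 c^2+\rho V\bigr)<0$ whenever $V<0$, so $U$ cannot re-cross $c$ from below; and (ii) on $\{V=0\}$ with $T\in(0,1)$ one has $\dot V=T(T-1)<0$, so $V$ cannot return to $0$. To make this precise without circular reasoning, I would run a continuity (bootstrap) argument: let $\xi^*\in(\xi_1,\infty]$ be maximal such that $T\in(0,1)$, $U<c$, and $V<0$ all hold on $(\xi_1,\xi^*)$. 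This interval is nonempty by the previous paragraph (using $U(\xi_1)\le u_0<c$). On it, $\dot T=(U-c)T+V<0$, so $T$ is strictly decreasing and stays below $1$, while $\dot V=T(T-1)<0$, so $V$ is strictly decreasing and stays below $0$. Facts (i)--(ii) then show that neither $U=c$ nor $V=0$ can be the constraint that fails at $\xi^*$.

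It follows that if $\xi^*<\infty$ the only constraint that can fail at $\xi^*$ is $T=0$; but at such a point $\dot T=V(\xi^*)<0$, so $T$ crosses transversally into $\{T<0\}$, which is exactly the desired conclusion. In the remaining case $\xi^*=\infty$, the trajectory satisfies $T\in(0,1)$ and $V<0$ with $V$ strictly decreasing for all $\xi>\xi_1$; hence, fixing any $\xi_2>\xi_1$, we have $V(\xi)\le V(\xi_2)<0$ for all $\xi\ge\xi_2$, so $V$ is bounded away from $0$ and cannot tend to $0$. This contradicts convergence to the origin, for which $V(\xi)\to 0$. Therefore the assumption $T\ge 0$ must fail, proving part (5).

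The main obstacle is the bootstrap keeping $U<c$: unlike the planar inviscid system, here $U$ evolves through its own equation, and once $U$ dips below $0$ the term $\tfrac12 U(U-2c)$ changes sign and could in principle drive $U$ back up toward $c$. The sign computation (i) on $\{U=c\}$ is precisely what rules this out, and it is important that it relies only on $V<0$ and not on the sign of $U$. Everything else — the monotonicity of $T$ and $V$ and the final ``$V$ bounded away from $0$'' contradiction — then follows directly and mirrors the planar crossing arguments used in \Cref{prop:Equilibria} and \Cref{lem:MinSpeed} for the inviscid case.
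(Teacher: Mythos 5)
Your treatment of part (5) is correct and is in essence the paper's own argument: both proofs hinge on the same three sign facts, namely that once $V<0$ with $T\ge 0$ the velocity $U$ cannot cross $c$ (your fact (i) is the paper's remark that ``$U$ is either negative or decreasing as long as $V<0$''), hence $\dot T=(U-c)T+V<0$, hence $T$ stays below $1$, hence $\dot V=T(T-1)<0$. The only cosmetic difference is the punchline: the paper argues that for $V$ to return to $0$ it must increase somewhere, which by \eqref{ODE3D} forces $T>1$ there, contradicting $T<1$; you argue that $V$ never increases and is therefore bounded away from $0$, contradicting $V(\infty)=0$. These are mirror images of the same contradiction, and your bootstrap bookkeeping is sound. (A small aside: your ``degenerate corner'' $T(\xi_1)=V(\xi_1)=0$ cannot actually occur, since $\{T=V=0\}$ is invariant for \eqref{ODE3D} and backward uniqueness would then contradict $T(-\infty)=1$; your handling of it is harmless but vacuous.)

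The genuine gap is that the proposal proves only part (5) of a five-part proposition. Parts (1), (3), and (4) are, as the paper notes, near-verbatim adaptations of the planar case (\Cref{prop:Equilibria}), but your argument for (5) explicitly invokes part (3), so (3) cannot simply be presumed. More importantly, part (2) — hyperbolicity of $(1,u_0,c-u_0)$ with a one-dimensional unstable manifold pointing into $\mathcal{C}$ — requires real work in the viscous case and is entirely absent. The paper establishes it by forming the cubic characteristic polynomial $g(\lambda)$ of the linearization, using $g(0)<0$ to produce one positive real root, showing that no root can lie on the imaginary axis for any $c,\nu,\rho>0$ (so the unstable root count is parameter-independent), and then counting roots in the limit $\nu\to\infty$, where $g$ is a regular perturbation of $\lambda^3+(c-u_0)\lambda^2-\lambda$ together with a singular root $\lambda_2=(u_0-c-\rho)\varepsilon+\mathcal{O}(\varepsilon^2)$, $\varepsilon=\nu^{-1}$. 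Without (2) there is no one-dimensional unstable manifold entering $\mathcal{C}$ for parts (3)–(5) to speak of, so a complete proof of the proposition must supply this eigenvalue analysis.
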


\begin{proof}
	Point (1) is proved by simply inspecting the eigenvalues of the matrix resulting from linearizing \eqref{ODE3D} about the origin and is therefore omitted. Similarly, points (3) and (4) follow in a nearly identical manner to the analogous points in Proposition~\ref{prop:Equilibria}. Therefore, we only prove points (2) and (5) here, starting with the former.
	
	Linearizing \eqref{ODE3D} about the equilibrium point $(1,u_0,c-u_0)$ results in the matrix 
	\begin{equation}\label{Matrix3D}
		\begin{bmatrix}
			u_0 - c & 1 & 1 \\ 0 & \frac{u_0 - c}{\nu} & \frac{\rho}{\nu} \\ 1 & 0 & 0
		\end{bmatrix}
	\end{equation}
	for which the eigenvalues, $\lambda \in \mathbb{C}$, are roots of the characteristic equation:
	\be
		g(\lambda) := \lambda^3 + \bigg(\frac{(c - u_0)(1 + \nu)}{\nu}\bigg) \lambda^2 + \bigg(\frac{[(c - u_0)^2 - \nu]}{\nu}\bigg)\lambda + \frac{u_0 - c - \rho}{\nu}.  
	\ee
	We first note that $g(0) < 0$ by definition of $u_0$, and since $g$ is a cubic polynomial in $\lambda$ with positive leading coefficient, it follows from the intermediate value theorem that there exists a real positive root of $g$ for all $c,\nu,\rho > 0$. We now show that the other roots of $g$ have negative real part. Once we know that $g$ has only one root with positive real part, the fact that the tangent vector of this one-dimensional unstable manifold points into $\mathcal{C}$ follows from a local analysis of the flow along the edges of $\mathcal{C}$ near the equilibrium $(1,u_0,c-u_0)$.
	
	We begin by showing that the roots of $g$ never cross the imaginary axis through variations of the parameters $c,\nu,\rho > 0$. Note that this implies that the equilibrium $(1,u_0,c-u_0)$ is hyperbolic for all relevant parameter values since none of the eigenvalues of \eqref{Matrix3D} can lie on the imaginary axis. Substituting $\lambda = {\rm i\omega}$ for some $\omega \in \R$, into $g(\lambda) = 0$ and separating out into real and imaginary parts gives that $\omega$ solves
	\be
		\begin{split}
			-\omega^3 + \bigg(\frac{[(c - u_0)^2 - \nu]}{\nu}\bigg)\omega &= 0 \\
			\bigg(\frac{(u_0 - c)(1 + \nu)}{\nu}\bigg) \omega^2 + \bigg(\frac{u_0 - c - \rho}{\nu}\bigg) &= 0. 
		\end{split}
	\ee
	But, since $u_0 < c$, it follows that no real value of $\omega$ can be obtained to solve the second equation for any $c,\nu,\rho > 0$, thus proving the claim.
	
	Now that we know that the number of roots of $g$ with negative and positive real parts do not change by varying parameters, we consider the parameter region $\nu \gg 1$ to show that there are two roots with negative real part. Setting $\varepsilon := \nu^{-1}$ and considering $0 < \varepsilon \ll 1$ gives that $g$ can be written
	\[
		g(\lambda) =  \lambda^3 + (c - u_0) \lambda^2 - \lambda + \varepsilon[(c - u_0)\lambda^2 + (u_0-c)^2\lambda + u_0 - c - \rho]. 
	\]
	For $0 < \varepsilon \ll 1$ the function $g$ becomes a regular perturbations of the cubic polynomial $\lambda^3 + (c - u_0) \lambda^2 - \lambda$, and therefore the roots of $g$, denoted $\lambda_1,\lambda_2,$ and $\lambda_3$, are given by
	\[
		\begin{split}
			\lambda_1 &= \frac{1}{2}(u_0 - c - \sqrt{(u_0 - c)^2 + 4}) + \mathcal{O}(\varepsilon), \\
			\lambda_2 &= 0 + \mathcal{O}(\varepsilon), \\
			\lambda_3 &= \frac{1}{2}(u_0 - c + \sqrt{(u_0 - c)^2 + 4}) + \mathcal{O}(\varepsilon).
		\end{split}
	\] 
	Since $u_0 < c$ we have that $\lambda_1 < 0 < \lambda_3$ for all sufficiently small $\varepsilon > 0$. It remains to show that $\lambda_2 < 0$ for sufficiently small $\varepsilon > 0$. Let us introduce $\lambda_2 = \varepsilon\tilde{\lambda}_2$ so that 
	\[
		0 = g(\lambda_2) = g(\varepsilon\tilde{\lambda}_2) = \varepsilon(-\tilde{\lambda}_2 + u_0 - c - \rho) + \mathcal{O}(\varepsilon^2).
	\]
	Hence, $\lambda_2 = (u_0 - c - \rho)\varepsilon + \mathcal{O}(\varepsilon^2)$ for all sufficiently small $\varepsilon > 0$. Since $u_0 - c - \rho < 0$, it follows that $\lambda_2 < 0$ when $0 < \varepsilon \ll 1$, and from the above arguments we have that $g$ always has two roots with negative real parts and one positive real root. The proves Point (2).
	
	In order to establish (5), we argue by contradiction.  We assume that $T\geq 0$ on $\R$ and that $V$ becomes negative for the first time on some interval $(\underline \xi_V, \overline \xi_V)$.  Since $V$ is negative somewhere and $V(\infty) = 0$, it must increase somewhere on $(\underline \xi_V, \overline \xi_V)$.  Using~\eqref{ODE3D} and our contradictory assumption, it follows that $T$ must be larger than $1$ somewhere on $(\underline \xi_V, \overline \xi_V)$.  By Point (3), $T$ and $U$ are decreasing on $(-\infty, \underline \xi_V]$.  Further, by~\eqref{ODE3D}, $U$ is either negative or decreasing as long as $V < 0$.  It follows that $U<c$ on $(\underline \xi_V, \overline \xi_V)$, which, by~\eqref{ODE3D} again, implies that $T$ is decreasing on $(\underline \xi_V, \overline \xi_V)$.  Thus, $T< 1$ on $(-\infty, \overline \xi_V)$, which is a contradiction.
\end{proof} 

Much like the inviscid equation, we see from Proposition~\ref{prop:Equilibria2} that a heteroclinic orbit (if it exists) from $(1,u_0,c-u_0)$ to the origin necessarily\footnote{Actually, there is a small subtlety here.  It is {\em a priori} possible that a heteroclinic connection approaches the origin along the stable manifold associated to the {\em real} eigenvalue of the linearized equation, which allows it to avoid the rotation induced by the complex eigenvalues when $c< 2$.  This can be ruled out by computing the associated eigenvector, which is $(0,1,0)$.  It is, thus, easy to see that this manifold is completely contained in the set $\{(0,U, 0) : U \in \R\}$ and, hence, cannot connect to $(1,u_0,c-u_0)$.} has $T$ and/or $U$ components that are negative when $c \in(0,2)$, thus giving a lower bound on the existence of nonnegative heteroclinic orbits, i.e. $\underline c_*(\nu,\rho) \geq 2$ for all $\nu,\rho > 0$ in the language of Theorem~\ref{thm:Viscous}.

Despite the system \eqref{ODE3D} having one more dimension than that of \eqref{Planar}, from Proposition~\ref{prop:Equilibria2}(4) the analysis is similar since we need only protect the component $V$ from becoming negative to ensure that a desired heteroclinic orbit exists. We show this now.

\begin{lem}\label{lem:TW_char}
	Suppose that $(T,U,V)$ is a solution of~\eqref{ODE3D} with $(T(-\infty), U(-\infty), V(-\infty)) = (1, u_0, c-u_0)$.  Then $(T(\infty), U(\infty), V(\infty)) = (0,0,0)$ and $T\geq 0$ if and only if $(T,U,V)$ leaves $(1,u_0,c-u_0)$ into $\mathcal{C}$ and $V\geq 0$.
\end{lem}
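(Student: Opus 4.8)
The plan is to prove the two implications of the biconditional separately, using throughout that any solution with the prescribed limit at $-\infty$ lies on the unstable manifold of $(1,u_0,c-u_0)$, which by \Cref{prop:Equilibria2} is one-dimensional. This manifold consists of exactly two branches leaving the equilibrium, and the eigenvector computation underlying \Cref{prop:Equilibria2} distinguishes them: the inward branch has tangent pointing into $\mathcal{C}$ (so $T<1$, $U<u_0$, $V<c-u_0$ just after leaving), while the outward branch has $T>1$, $U>u_0$, $V>c-u_0$ there. Thus ``leaves into $\mathcal{C}$'' means precisely that $(T,U,V)$ is (a translate of) the inward branch, and I will treat the two directions with this dichotomy in mind.

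For the reverse implication, suppose the trajectory enters $\mathcal{C}$ and $V\ge 0$ throughout. By \Cref{prop:Equilibria2}, the only way the trajectory can exit $\mathcal{C}$ is by crossing $V=0$; since $V$ never becomes negative this cannot happen, so the trajectory remains in $\mathcal{C}$ for all $\xi$. By the monotonicity in \Cref{prop:Equilibria2}, each of $T,U,V$ is decreasing and confined to the compact cube $\mathcal{C}$, hence each coordinate converges as $\xi\to\infty$ and the limit is necessarily a rest point of \eqref{ODE3D}. A direct computation shows that the only equilibria of \eqref{ODE3D} in $\mathcal{C}$ are the origin and $(1,u_0,c-u_0)$; since the trajectory strictly decreases away from the latter, its limit must be the origin. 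Finally $T\ge 0$ because the trajectory stays in $\mathcal{C}$, which completes this direction.

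For the forward implication, suppose $(T(\infty),U(\infty),V(\infty))=(0,0,0)$ and $T\ge0$. Granting for the moment that the trajectory is the inward branch, the bound $V\ge0$ follows at once: if $V$ ever became negative then, starting from inside $\mathcal{C}$, the trajectory would have to exit $\mathcal{C}$ through $V=0$ by \Cref{prop:Equilibria2}, and \Cref{prop:Equilibria2} then forces $T<0$ somewhere, contradicting $T\ge0$. The main obstacle is therefore to exclude the outward branch, on which $T>1$, $U>u_0$, and $V>c-u_0$ immediately after leaving the equilibrium. I expect to argue by contradiction in the spirit of Case two of the proof of \Cref{lem:big_equilibrium}: while $T\ge1$ one has $\dot V=T(T-1)\ge0$, so $V$ cannot return toward $0$, and at the first downward crossing of $\{T=1\}$ the inequality $\dot T\le0$ reads $V\le c-U$, which combined with $V>c-u_0$ forces $U<u_0$ there; tracking the resulting sign pattern of $U-c$ and $\dot U$ should show $T$ must become negative before reaching the origin. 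The difficulty relative to the inviscid \Cref{lem:big_equilibrium} is that the natural auxiliary quantity $Y=(U-c)^2$ no longer has the clean derivative $\dot Y=2\rho T(1-T)$; the viscous term instead gives $\dot Y=\tfrac{2}{\nu}(U-c)\bigl(\tfrac12U^2-cU+\rho V\bigr)$, whose sign is not immediate, so the monotonicity that drove the two-dimensional argument must be re-established directly from the three-dimensional flow. This is the step I expect to require the most care.
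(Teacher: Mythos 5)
Your backward direction, and the part of the forward direction establishing $V\geq 0$ once the trajectory is known to enter $\mathcal{C}$, are correct and essentially identical to the paper's proof: trapping via \Cref{prop:Equilibria2}(4), convergence to the origin via monotonicity plus compactness of $\mathcal{C}$, and the contradiction with $T\geq 0$ via \Cref{prop:Equilibria2}(5). Your identification of the equilibria inside $\mathcal{C}$ is also correct.

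However, the forward direction has a genuine gap, and you flag it yourself: the exclusion of the outward branch is left as a sketch whose key step ("tracking the resulting sign pattern of $U-c$ and $\dot U$ should show $T$ must become negative") is not carried out, and the tool that made the analogous Case two of \Cref{lem:big_equilibrium} work, namely that $Y=(U-c)^2$ is monotone, is exactly what fails here, as you note. So the argument as proposed does not close. The missing idea is much simpler than what you are attempting: one does not need any auxiliary quantity or the first-order system \eqref{ODE3D} at all. On the outward branch, $T$, $U$, and $V$ all increase initially, with $T>1$ and $U>u_0>0$; since the trajectory is assumed to reach the origin, each of $T$ and $U$ must attain a first interior maximum, and one of these occurs first. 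If $T$ peaks first, then at that point $\dot T=0$, $\ddot T\leq 0$, $\dot U\geq 0$, $T>1$, and the \emph{second-order} equation \eqref{TWave2} for $T$ gives $0\leq -\ddot T+\dot U\,T=T(1-T)<0$, a contradiction. If $U$ peaks first, then $\dot U=0$, $\ddot U\leq 0$, and $T>1$ still, so the second equation of \eqref{TWave2} gives $0\leq-\nu\ddot U=\rho T(1-T)<0$, again a contradiction. This maximum-principle-style evaluation of the second-order traveling-wave equations at the first critical point is what replaces the two-dimensional monotonicity argument; without it (or some substitute), your proof of the forward implication is incomplete.
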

\begin{proof}
We prove the forward direction first.  Suppose that $(T(\infty), U(\infty), V(\infty)) = (0,0,0)$ and $T \geq 0$. First we suppose that $(T,U,V)$ leaves $(1,u_0,c-u_0)$ out of $\mathcal{C}$.  It follows that $T$, $U$, and $V$ are increasing initially by \Cref{prop:Equilibria2}.(2).  In order to connect to the origin, it must be that either $T$ or $U$ must have a maximum.  Suppose that $T$ has a maximum before $U$.  Then, at the maximum, $T>1$, $\ddot T \leq 0$, $\dot T = 0$ and $\dot U \geq 0$.  Plugging this into~\eqref{TWave2}, we find
\[
	0 \leq - c \dot T - \ddot T + \dot U T + U \dot T
		= T(1-T) < 0,
\]
which is a contradiction.  A similar argument shows that the maximum of $U$ cannot occur before $T$.  We conclude that $(T,U,V)$ cannot leave $(1,u_0, c-u_0)$ out of $\mathcal{C}$.  

Next, we suppose that $V<0$ somewhere.  By \Cref{prop:Equilibria2}.(5), it follows that $T$ becomes negative, which is a contradiction of our assumption $T \geq 0$.  We conclude that $V\geq 0$ everywhere.

We now prove the backward direction.   Suppose that $(T,U,V)$ leaves $(1,u_0,c-u_0)$ into $\mathcal{C}$ and $V\geq 0$ always.  By \Cref{prop:Equilibria2}.(4), since $V\geq 0$, it follows that $(T,U,V)$ remains in $\mathcal{C}$.  Thus, $T\geq 0$.  Further, as $\mathcal{C}$ is compact and $T$, $U$, and $V$ are decreasing, we must have that $(T(\infty),U(\infty), V(\infty)) = (0,0,0)$.  This concludes the proof.
\end{proof} 

We are now able to leverage the above characterization of traveling waves, \Cref{lem:TW_char}, in order to establish the existence of traveling waves at sufficiently large speeds for any $\rho$ and $\nu$.

\begin{lem}\label{lem:Het2} 
	For each $\nu,\rho > 0$, there exists a finite $\overline c(\nu,\rho) \geq 2$, which depends on $\rho$ and $\nu$, such that for all $c \geq \overline c(\nu,\rho)$ system \eqref{ODE3D} has a heteroclinic orbit from $(1,u_0,c-u_0)$ to $(0,0,0)$ which remains in $\mathcal{C}$ for all $\xi \in \R$.
\end{lem}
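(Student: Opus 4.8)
The plan is to mirror the inviscid argument of \Cref{lem:Het}, but since $V$ is now an independent coordinate I will build a trapping surface directly out of the $(T,V)$ relationship rather than inheriting it from a conserved quantity. By \Cref{lem:TW_char}, it suffices to show that the component of the unstable manifold of $(1,u_0,c-u_0)$ that enters $\mathcal{C}$ (\Cref{prop:Equilibria2}(2)) satisfies $V\geq 0$ for all $\xi$: indeed, \Cref{prop:Equilibria2}(4) guarantees that the only face through which the orbit can leave $\mathcal{C}$ is $V=0$, so preventing this forces the orbit to remain in $\mathcal{C}$ and hence, by the monotonicity of \Cref{prop:Equilibria2}(3) and compactness, to limit onto the origin.

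To this end I would introduce the planar trapping surface $V = \alpha T$ for a fixed $\alpha>0$ (the choice $\alpha=1$ will suffice) and show that the region $\{V\geq \alpha T\}\cap\mathcal{C}$ is forward-invariant once $c$ is large. Crucially, the relevant computation involves only the $\dot T$ and $\dot V$ equations of~\eqref{ODE3D}, neither of which sees $\nu$. On the surface $V=\alpha T$ one computes
\[
	\frac{\drm}{\drm\xi}\big(V-\alpha T\big) = \dot V - \alpha \dot T = T\big[(T-1) + \alpha(c - U - \alpha)\big],
\]
so the inward (trapping) condition $\frac{\drm}{\drm\xi}(V-\alpha T)\geq 0$ reduces, using $T\in[0,1]$ and $U\leq u_0$, to the single inequality $\alpha(c-u_0-\alpha)\geq 1$. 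Since $c - u_0 = \sqrt{c^2+\rho^2}-\rho\to\infty$ as $c\to\infty$, this holds for all sufficiently large $c$ with $\alpha=1$; one also checks that the surface lies in $\mathcal{C}$ (which needs only $\alpha\leq c-u_0$) and that the orbit starts strictly above it, since $V-\alpha T = (c-u_0)-\alpha>0$ at the equilibrium.

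With these pieces in place I would run the standard first-exit argument: if the orbit left $\mathcal{C}$ at a finite $\xi_*$, then \Cref{prop:Equilibria2}(4) forces $V(\xi_*)=0$; but on $(-\infty,\xi_*]$ the orbit lies in $\mathcal{C}$, so the trapping inequality applies there and shows that $V-\alpha T$ cannot cross from positive to negative, whence $V(\xi_*)\geq \alpha T(\xi_*)\geq 0$. This forces $T(\xi_*)=0$, contradicting that $V=0$ is a genuine exit with $T>0$. Therefore $V\geq \alpha T\geq 0$ for all $\xi$, and \Cref{lem:TW_char} delivers the heteroclinic connection to the origin inside $\mathcal{C}$; taking $\overline c(\nu,\rho)$ to be the threshold beyond which $\alpha(c-u_0-\alpha)\geq1$ (for instance any $c\geq 2\sqrt{\rho+1}$, which also exceeds $2$) completes the proof.

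I expect the main obstacle to be the bootstrapping/first-exit bookkeeping rather than the trapping inequality itself: one must argue carefully that the bound $V\geq\alpha T$ persists while the orbit is in $\mathcal{C}$ and that, simultaneously, this bound keeps it in $\mathcal{C}$, handling the degenerate corner $\{T=V=0\}$ where the surface meets the face $V=0$ (which is precisely where the orbit approaches the origin). The essential new wrinkle compared to the inviscid case is that there the trapping curve came for free from the conserved quantity relating $U$ and $V$, whereas here $V$ must be controlled on its own; fortunately, the viscosity $\nu$ enters only the $\dot U$ equation and so plays no role in the decisive estimate, which is what allows the argument to go through uniformly in $\nu$.
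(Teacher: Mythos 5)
Your proposal is correct and takes essentially the same approach as the paper: the paper's proof likewise traps the orbit in a region of the form $\{(T,U,V)\in\mathcal{C} : V \geq \kappa T\}$, checks that the equilibrium lies inside it, and exploits that the decisive estimate on $\dot V - \kappa \dot T$ involves only the $\nu$-independent equations, concluding via \Cref{prop:Equilibria2}(4) and the invariance of $\{T=V=0\}$. The only substantive difference is that you take the fixed slope $\kappa = 1$, which yields the explicit threshold $c \geq 2\sqrt{\rho+1}$, whereas the paper uses the $c$-dependent slope $\kappa = c/2$ together with an asymptotic-in-$c$ estimate.
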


\begin{proof}
	This proof proceeds in a nearly identical manner to that of Lemma~\ref{lem:Het} by showing that, for $c \gg 1$, we can form a region that prevents $V$ from becoming negative. Begin by fixing $\nu,\rho > 0$ and considering the region
	\be\label{Region1}
		\mathcal{R}_1 = \{(T,U,V)\in\mathcal{C}:\ \frac{c}{2}T \leq V \leq c-u_0,\ T\in[0,1]\}.
	\ee
	We show that this region contains the component of the unstable manifold of $(1,u_0,c-u_0)$ that enters in to $\mathcal{C}$ for all $\xi\in\R$. We first discuss how confining this unstable manifold to the region $\mathcal{R}_1$ proves it must converge to the origin as $\xi \to \infty$. Note that Proposition~\ref{prop:Equilibria2} gives that the only way by which this unstable manifold can leave $\mathcal{C}$ is by crossing $V = 0$. If the unstable manifold lies in $\mathcal{R}_1$ up to a point where we get $V = 0$, we notice that by definition we must also have $T = 0$. Notice that $T = V = 0$ is an invariant manifold of \eqref{ODE3D}, and so the trajectory remains at $T = V = 0$ indefinitely. On this invariant manifold in $\mathcal{C}$, we have that $0 \leq U < c$, and so $U$ is decreasing towards $U = 0$ as $\xi \to \infty$, thus giving that the trajectory converges to the origin as $\xi \to \infty$. Hence, we see that if $V = 0$ we must converge to the origin, while if $V > 0$ for all $\xi \in\R$, the discussion prior to this lemma again proves that we converge to the origin. Thus, showing that if the component of the unstable manifold of $(1,u_0,c-u_0)$ that enters in to $\mathcal{C}$ remains in $\mathcal{R}_1$ for all $\xi\in\R$, then it is guaranteed that it converges to the origin, giving the desired heteroclinic orbit.  
	
	We next check that the equilibrium belongs to $\mathcal{R}_1$ for all sufficiently large $c > 0$. This amounts to checking that $c/2 <c - u_0$. Then, for $c \gg 1$ we have  
	\begin{equation}\label{TrapInCube}
		\begin{split}
			\frac{c}{2} - (c - u_0) &= \frac{c}{2} + \rho - \sqrt{c^2 + \rho^2} \\
			&= -\frac{c}{2} + \rho + \mathcal{O}\left(\frac{\rho^2}{c}\right),
		\end{split}	
	\end{equation} 
	which is strictly negative for any $\rho > 0$ and all $c$ taken sufficiently large, depending on the value of $\rho$. Hence, we find that the curve \eqref{Region1} does indeed lie entirely in the cube $\mathcal{C}$ for all $T\in[0,1]$ for all $c$ sufficiently large. 
	
	We now show that $\mathcal{R}_1$ traps the unstable manifold inside of it. Let us assume that at some $\xi_0 \in\R$ this unstable manifold hits the boundary of $\mathcal{R}_1$. From Proposition~\ref{prop:Equilibria2} this must occur at the lower boundary $V = \frac{c}{2}T$. Then, it must be the case that at $\xi = \xi_0$ we have
	\be
		\frac{\drm}{\drm\xi}\bigg(V - \frac{c}{2}T\bigg) \leq 0
	\ee
	when $V = \frac{c}{2}T$ inside of $\mathcal{C}$. From the arguments above we may assume that $T(\xi_0) > 0$, and using system \eqref{ODE3D} at $\xi = \xi_0$ we have that
	\be\label{CubeTrap}
		\begin{split}
			 0 \geq\frac{\drm}{\drm \xi}\bigg(V - \frac{c}{2} T\bigg) &= T(T-1) -\frac{c}{2}\bigg(-cT + UT + \frac{c}{2} T\bigg) \\
			&= -T\bigg(\frac{c^2}{4} + \frac{c}{2}(U - c) + 1 - T\bigg) \\
			&\geq T\bigg(-\frac{c^2}{4} + \frac{c}{2}(\sqrt{\rho^2 + c^2} - \rho) - 1\bigg)
		\end{split}
	\ee 
	where we have used the fact that $0 \leq U \leq u_0$ and $0 \leq T \leq 1$. From here we may proceed in a similar manner to the arguments of Lemma~\ref{lem:Het}. Indeed, when $\rho/c \ll 1$, we have $\sqrt{c^2 + \rho^2} = c + \mathcal{O}(\rho^2/c)$, and, hence,~\eqref{CubeTrap} becomes
	\be\label{CubeTrap2}
		 0 \geq\frac{\drm}{\drm \xi}\bigg(V - \frac{c}{2} T\bigg)
			\geq T\bigg(\frac{c^2}{4} - \frac{c\rho}{2} +  \mathcal{O}\left(\rho^2\right) - 1\bigg).
	\ee
	Hence, $\frac{\drm}{\drm \xi}\bigg(V - \frac{c}{2} T\bigg) > 0$ for sufficiently large $c$. This is clearly a contradiction, thus showing the unstable manifold cannot hit the boundary of $\mathcal{R}_1$. Hence, the region $\mathcal{R}_1$ contains the component of the unstable manifold of $(1,u_0,c-u_0)$ that enters in to $\mathcal{C}$ for all sufficiently large $c > 0$. This therefore completes the proof.
\end{proof} 


\subsection{Small $\rho$ parameter regime}\label{subsec:ViscousSmallRho}

In this section we present results on the speeds for which traveling waves of the PDE \eqref{FKPP_Burgers} in the viscous parameter regime exhibits nonnegative monotone traveling waves when $\rho > 0$ is small. First, examining the proof of \Cref{lem:Het2} (see, in particular,~\eqref{TrapInCube} and~\eqref{CubeTrap2}), it is clear that $\overline c_*(\nu,\rho) \to 2$ as $\rho \to 0$.  We collect this in the following corollary.

\begin{cor}\label{lem:SmallRho2} 
	As $\rho \to 0$, $\overline c(\nu,\rho) \to 2$.
\end{cor}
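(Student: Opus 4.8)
The plan is to re-examine the trapping construction of \Cref{lem:Het2} and to show that, for any fixed $c>2$, it succeeds at speed $c$ once $\rho$ is small enough. Since \Cref{lem:Het2} already gives $\overline c(\nu,\rho)\geq 2$ for every $\rho>0$, it suffices to prove the matching bound $\limsup_{\rho\to 0^+}\overline c(\nu,\rho)\leq 2$. To this end, I would fix $\nu>0$ and an arbitrary $c>2$, and produce a threshold $\rho_0=\rho_0(c,\nu)>0$ so that for all $\rho\in(0,\rho_0]$ the region $\mathcal{R}_1$ from~\eqref{Region1} traps the relevant branch of the unstable manifold of $(1,u_0,c-u_0)$; by \Cref{lem:TW_char} this yields a heteroclinic orbit at speed $c$, whence $\overline c(\nu,\rho)\leq c$.

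There are exactly two conditions to verify, just as in the proof of \Cref{lem:Het2}, and the whole point is that each survives when $c>2$ is held fixed and $\rho\to 0^+$, rather than requiring $c\gg 1$. First, the equilibrium $(1,u_0,c-u_0)$ must lie in $\mathcal{R}_1$, i.e.\ $u_0<c/2$; since $u_0=c+\rho-\sqrt{c^2+\rho^2}\to 0$ as $\rho\to 0^+$, this holds for all sufficiently small $\rho$. Second, I must rule out crossing of the lower boundary $V=\tfrac{c}{2}T$, i.e.\ show that
\[
	\frac{\drm}{\drm\xi}\Big(V-\frac{c}{2}T\Big)
		= T\Big[\frac{c^2}{4}-\frac{c}{2}U+(T-1)\Big] > 0
\]
whenever $V=\tfrac{c}{2}T$ and $T>0$. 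Bounding $U\leq u_0$ and $T-1\geq -1$, the bracket is at least $\tfrac{c^2}{4}-\tfrac{c}{2}u_0-1$, which tends to $\tfrac{c^2}{4}-1>0$ as $\rho\to 0^+$ because $c>2$. Hence for $\rho$ small the bracket is strictly positive and the trapping inequality holds.

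Combining the two conditions, for all $\rho\in(0,\rho_0]$ the branch of the unstable manifold entering $\mathcal{C}$ remains in $\mathcal{R}_1$, so $V\geq \tfrac{c}{2}T\geq 0$ throughout; \Cref{lem:TW_char} then furnishes the desired heteroclinic orbit and gives $\overline c(\nu,\rho)\leq c$. Sending $\rho\to 0^+$ yields $\limsup_{\rho\to 0^+}\overline c(\nu,\rho)\leq c$, and since $c>2$ was arbitrary we obtain $\limsup_{\rho\to 0^+}\overline c(\nu,\rho)\leq 2$, which together with $\overline c(\nu,\rho)\geq 2$ completes the proof. The only genuine subtlety is that the estimates in \Cref{lem:Het2} are phrased for $c\gg 1$: the main point here is that the governing trapping inequality collapses to the condition $\tfrac{c^2}{4}-1>0$, which holds for \emph{every} fixed $c>2$ regardless of its size, so no largeness of $c$ is needed once $\rho$ is taken small.
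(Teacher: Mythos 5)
Your proof is correct and is essentially the paper's own argument: the paper establishes this corollary precisely by re-examining the proof of \Cref{lem:Het2} and observing that its two key estimates, \eqref{TrapInCube} (the equilibrium lies in $\mathcal{R}_1$, i.e.\ $u_0 < c/2$) and \eqref{CubeTrap2} (positivity of the bracket on the boundary $V = \tfrac{c}{2}T$), only require $\rho$ small relative to a fixed $c>2$ rather than $c \gg 1$ --- exactly your two conditions. The only refinement worth adding is that, since $\overline c_*(\nu,\rho)$ is a threshold above which waves exist for \emph{all} speeds, you should note your smallness condition on $\rho$ works simultaneously for every speed $c' \geq c$ (immediate, since $u_0 < \rho$ and $\tfrac{c'^2}{4} - \tfrac{c'}{2}\rho - 1$ is increasing in $c'$ for $c' > \rho$), so that indeed $\overline c_*(\nu,\rho) \leq c$.
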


Having establishing this fact, which has no restriction on the range of $\nu$, we specialize to the small $\nu$ regime in order to establish the threshold result on $\rho$; that is, we show that, if $\rho$ is sufficiently small, the minimal speed is $2$.

\begin{lem}\label{lem:SmallRho3} 
	There exists $\nu_* \geq 2$ so that for all $\nu \in (0,\nu_*)$ there exists $\rho_\nu > 0$ such that for all $c \geq 2$ system~\eqref{Planar} has a heteroclinic orbit from $(1,u_0)$ to $(0,0)$ which remains in $\mathcal{B}$ for all $\xi \in \R$. As a consequence, $\underline c_* (\nu,\rho) = \overline c_*(\nu, \rho) = 2$ for all $(\nu,\rho) \in (0,\nu_*)\times(0,\rho_\nu)$. 
\end{lem}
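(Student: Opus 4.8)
Throughout I work with the three–dimensional system~\eqref{ODE3D} and the cube $\mathcal{C}$ (the statement's reference to~\eqref{Planar} and $\mathcal{B}$ being a slip for~\eqref{ODE3D} and $\mathcal{C}$). By \Cref{lem:TW_char}, it is enough to show that, for $\rho,\nu$ small and every $c\geq 2$, the branch of the unstable manifold of $(1,u_0,c-u_0)$ entering $\mathcal{C}$ stays in $\mathcal{C}$ with $V\geq 0$. The conclusion $\underline c_*(\nu,\rho)=\overline c_*(\nu,\rho)=2$ then follows by combining existence for all $c\geq 2$ (so $\overline c_*\leq 2$) with the nonexistence for $c<2$ from \Cref{prop:Equilibria2}(1) (so $\underline c_*\geq 2$) and $\underline c_*\leq\overline c_*$. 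Since \Cref{prop:Equilibria2}(4) says the only exit from $\mathcal{C}$ is through $V=0$, the whole task is to build a trapping surface keeping $V$ above a positive multiple of $T$.

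The natural lower surface is $\{V=T\}$, which is exactly the lift of the inviscid trapping curve~\eqref{Boundary1}: on the inviscid slow manifold $V=\frac{U}{2\rho}(2c-U)$ the curve~\eqref{Boundary1} is $\{V=T\}$ at $c=2$. First I would check the manifold starts in $\{V\geq T\}$, i.e.\ $c-u_0=\sqrt{c^2+\rho^2}-\rho>1$, which holds for $c\geq 2$, $\rho<1$. Differentiating~\eqref{ODE3D} along $\{V=T\}$ gives
\be
	\frac{\drm}{\drm\xi}(V-T)=T\big(T-2+c-U\big),
\ee
so the surface traps as soon as $U\leq T+(c-2)$. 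For $c\geq 2+u_0$ this is automatic from $U\leq u_0$, so $\{V=T\}$ alone confines the manifold for every $\nu>0$; the flow cannot then leave $\mathcal{C}$ (a crossing of $V=0$ forces $T=0$, after which the trajectory lies on the invariant set $\{T=V=0\}$ and tends to the origin), giving the heteroclinic for all $c\geq 2+u_0$.

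The delicate range is the thin interval $c\in[2,2+u_0)$, where $T$ may fall below $u_0$ near the origin and $U\leq u_0$ is too weak. Here I would add a second surface $U=\kappa T$ with $\rho<\kappa<1$; since $\kappa<1$, the inclusion $U\leq\kappa T\leq T\leq T+(c-2)$ keeps $\{V=T\}$ trapping for all $c\geq 2$. To see $\{U\leq\kappa T\}$ is forward invariant, I compute on $U=\kappa T$, using $\dot T=V-(c-U)T<0$ in $\mathcal{C}$ and $V\geq T$,
\be
	\frac{\drm}{\drm\xi}(U-\kappa T)\leq T\Big\{\frac{1}{\nu}\big[\rho+\tfrac12\kappa^2 T-c\kappa\big]+\kappa\big(c-1-\kappa T\big)\Big\},
\ee
valid once $\rho/\nu\leq\kappa$; the $1/\nu$ bracket is negative since $\rho<c\kappa$. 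Hence the fast (size $1/\nu$) restoring term beats the $\mathcal{O}(1)$ drift exactly when $\nu<(c\kappa-\rho)/(\kappa(c-1))$, a threshold equal to $2-\mathcal{O}(\rho)$ over $c\in[2,2+u_0)$. The key is that $\rho_\nu$ may depend on $\nu$: given any $\nu<2$, taking $\rho_\nu$ small forces $\nu$ below this threshold, so both surfaces trap at once, and a first–exit argument (at a first contact with $\partial(\{V\geq T\}\cap\{U\leq\kappa T\})$ one of the two surface inequalities is contradicted) finishes the construction and yields $\nu_*=2$.

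The main obstacle is precisely the regime $c\in[2,2+u_0)$: this is where the monotone trapping of \Cref{lem:Het2} (which needs $c\gg 1$) fails, and one must control $U$ relative to $T$ all the way into the origin. The crux is that the $1/\nu$ factor in the $\dot U$ equation makes $U$ a fast variable slaved to the slow manifold $U=c-\sqrt{c^2-2\rho V}\approx\rho V/c$, driving $U$ below $\kappa T$; balancing this fast term against the $\mathcal{O}(1)$ drift is what produces $\nu_*\geq 2$ and forces $\rho_\nu\to 0$ as $\nu\uparrow 2$. As a consistency check, linearizing at the origin shows the trajectory approaches along the eigendirection with $U/T\to\rho(\lambda_++c)/(\nu\lambda_++c)$, where $\lambda_+=\frac12(-c+\sqrt{c^2-4})$; at $c=2$ this equals $\rho/(2-\nu)$, which lies below $\kappa$ exactly when $\rho<\kappa(2-\nu)$, matching the requirement that $\rho_\nu\to 0$ as $\nu\uparrow 2$.
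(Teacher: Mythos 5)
Your proposal is correct and follows essentially the same strategy as the paper's proof: a first-exit argument for a trapping region in the three-dimensional system bounded by a plane $V = \alpha T$ protecting $V$ and a plane $U = \beta T$ whose invariance is enforced by the fast $1/\nu$ relaxation in $\dot U$ (the paper takes $\alpha = c/2$ and $\beta = \sqrt{\rho}/\nu$ uniformly over $c \geq 2$, whereas you take $\alpha = 1$ and $\beta = \kappa$ fixed and split off the easy range $c \geq 2 + u_0$ where the $V$-plane alone suffices). Both routes yield the same threshold $\nu_* = 2$ with $\rho_\nu \to 0$ as $\nu \uparrow 2$, and your computations, including the strict sign at first-contact points and the handling of the invariant set $\{T = V = 0\}$, check out.
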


\begin{proof}
	We proceed via the construction of a trapping region.  The one in \Cref{lem:Het2} is not sufficiently precise in the $\rho \ll 1$ regime (see~\eqref{CubeTrap2}) and, hence, we aim to show that the region
	\be
		\mathcal{R}_2 = \{(T,U,V)\in\mathcal{C}:\ 0\leq \nu U \leq \sqrt{\rho}T,\ \frac{c}{2}T \leq V \leq c-u_0,\ T\in[0,1]\}
	\ee
	contains the unstable manifold of $(1,u_0,c-u_0)$ in $\mathcal{C}$ for all $\xi\in\R$, for sufficiently small $\rho > 0$ and all $c \geq 2$. Throughout we take $\nu \in (0,2)$ to ensure that $\nu$ is bounded. We note that this region is similar to the region used to prove Lemma~\ref{lem:SmallRho2} but also has an upper bound on $U$. 
	
	We first show that this region is well-defined and that the equilibrium $(1,u_0,c-u_0)$ belongs to $\mathcal{R}_2$, thus giving that the component of the unstable manifold of $(1,u_0,c-u_0)$ that enters in to $\mathcal{C}$ initially belongs to $\mathcal{R}_2$. As in the previous proofs, we need only check at $T = 1$. The proof that $\frac{c}{2} < c-u_0$ for sufficiently small $\rho > 0$ and all $c \geq 2$ follows as in the proof of Lemma~\ref{lem:Het2}, and so the bounds on $V$ are well-defined. We therefore need only check that $\sqrt{\rho} > \nu u_0$ to show that $(1,u_0,c-u_0) \in \mathcal{R}_2$ when $\rho > 0$ is small, uniformly in $c \geq 2$. Then,
	\be
		\begin{split}
		\sqrt{\rho} - \nu u_0 &= \sqrt{\rho} - \nu(c + \rho - \sqrt{c^2 + \rho^2}) 
		= \sqrt{\rho} - \nu\rho + \mathcal{O}\bigg(\frac{\rho^2}{c}\bigg),
		\end{split}
	\ee 
	so that for sufficiently small $\rho > 0$ we have that $\sqrt{\rho} > \nu u_0$, uniformly in $c \geq 2$ and $\nu\in (0,2)$. Hence, $(1,u_0,c-u_0) \in \mathcal{R}_2$ in the desired parameter region. 
	
	Let us now assume that there is some first point $\xi_0 \in \R$ at which the unstable manifold of $(1,u_0,c-u_0)$ intersects a boundary of $\mathcal{R}_2$ in $\mathcal{C}$. From Proposition~\ref{prop:Equilibria2}, we have that all components of this unstable manifold are monotonically decreasing, and so this collision must take place at a lower boundary in $(T,U,V)$. First, if $U = 0$,  Proposition~\ref{prop:Equilibria2}(4) implies that $V = 0$ as well. But then, from the definition of $\mathcal{R}_2$, it follows that $(T,U,V) = (0,0,0)$, and so $\xi_0 = \infty$, which would yield that the unstable manifold has converged to the origin as desired. Hence, we may assume that that either (i) $\nu U = \sqrt{\rho}T$ or (ii) $\frac{c}{2} T = V$ at $\xi = \xi_0 < \infty$. We show that neither of these cases are possible.   
	
	Let us begin by assuming that at $\xi_0$ we have $\nu U = \sqrt{\rho}T$.  Since this is a point of first collision,  $V \geq \frac{c}{2}T$ and  
	\be
		\frac{\drm}{\drm \xi} \bigg[\sqrt{\rho}T - \nu U\bigg] \leq 0.	
	\ee 
	Then, assuming that $\nu U = \sqrt{\rho}T$ and $\rho > 0$ is held sufficiently small, from the vector field \eqref{ODE3D} we have
	\be
		\begin{split}
		0 \geq \frac{\drm}{\drm \xi} \bigg[\sqrt{\rho}T - \nu U\bigg]  
		&= \sqrt{\rho}(U - c)T - \bigg(\frac{1}{2}U^2 - cU\bigg) + \sqrt{\rho}(1 - \sqrt{\rho})V \\
		&= \nu(U - c)U - \bigg(\frac{1}{2}U^2 - cU\bigg) + \sqrt{\rho}(1 - \sqrt{\rho})V \\	
		&= \bigg(\frac{c}{2} - \bigg(\nu - \frac{1}{2}\bigg)(c-U)\bigg)U + \sqrt{\rho}(1 - \sqrt{\rho})V \\
		&\geq \bigg(\frac{c}{2} - \bigg(\nu - \frac{1}{2}\bigg)(c-U) + \frac{c}{2}(1 - \sqrt{\rho})\nu\bigg)U,
		\end{split}
	\ee
	where we have used the fact that $\sqrt{\rho}V \geq \frac{c\sqrt{\rho}}{2}T = \frac{c\nu}{2}U$ in the last inequality. In the case that $\nu \in (0,\frac{1}{2})$, we use the fact that $0 < U < u_0 \leq c$ to see that 
	\be 
		0 \geq \frac{\drm}{\drm \xi} \bigg[\sqrt{\rho}T - \nu U\bigg] > 0,
	\ee
	a contradiction. Therefore, we are left with the case that $\nu \in [\frac{1}{2},2)$. Here we again use the fact that $U < c$ to get
	\be \label{e.c9141}
		\begin{split}
		0 \geq \frac{\drm}{\drm \xi} \bigg[\sqrt{\rho}T - \nu U\bigg]  
		&\geq \bigg(\frac{c}{2} - (\nu - \frac{1}{2})c + \frac{c}{2}(1 - \sqrt{\rho})\nu\bigg)U 
		= \left(c - \nu\left( \frac{\sqrt \rho + c}{2}\right)\right)U.
		\end{split}	
	\ee 
	The term on the right is positive if and only if $\nu < 2c/(\sqrt \rho + c)$.  Hence, if $\nu < 2$, there exists $\rho$ sufficiently small depending only on $\nu$ so that the term on the right in~\eqref{e.c9141} is positive.  This is a contradiction, showing that case (i) is not possible.

	Turning now to case (ii), we assume that $\frac{c}{2}T = V$ while $0 < \nu U \leq \sqrt{\rho}T$ at $\xi_0$. Proceeding as in Lemma~\ref{lem:SmallRho2}, it follows that 
	\be
		\begin{split}
			0 &\geq \frac{\drm}{\drm \xi}\bigg(V - \frac{c}{2} T\bigg) 
			= -T\bigg(\frac{c^2}{4} + \frac{c}{2}(U - c) + 1 - T\bigg) \\
			&\geq-T(U - T) 
			\geq \bigg(1 - \frac{\sqrt{\rho}}{\nu}\bigg)T^2,
		\end{split}
	\ee 
	since we have $c \geq 2$ and $\nu U \leq \sqrt{\rho}T$. Taking $\rho > 0$ sufficiently small then implies that  
	\be
		\frac{\drm}{\drm \xi}\bigg(V - \frac{c}{2} T\bigg) > 0,
	\ee 
	a contradiction. Therefore, case (ii) is also impossible, showing that the unstable manifold of $(1,u_0,c-u_0)$ cannot intersect the boundaries of $\mathcal{R}_2$ in $\mathcal{C}$. This concludes the proof.
\end{proof}


\subsection{Large $\rho$ parameter regime}\label{subsec:ViscousLargeRho}

In this subsection we consider \eqref{ODE3D}, and subsequently \eqref{FKPP_Burgers} with $\nu > 0$, in the large $\rho > 0$ parameter regime. In particular, the first presented lemma (Lemma~\ref{lem:LargeRho3}) shows that for each $\nu > 0$, monotone traveling waves of \eqref{FKPP_Burgers} exist at all speeds $c \gtrsim \rho^{1/2}$. The second lemma of this section, Lemma~\ref{lem:LargeRho4}, shows that monotone traveling waves of \eqref{FKPP_Burgers} cannot exist at speeds $c\lesssim \rho^{1/3}$. In the context of Theorem~\ref{thm:Viscous} these lemmas imply that $\overline c_*(\nu,\rho) = \mathcal{O}(\rho^{1/2})$ and $\underline c_*(\nu,\rho) = \mathcal{O}(\rho^{1/3})$ as $\rho \to \infty$. Hence, the minimum speed of traveling waves to \eqref{FKPP_Burgers} with $\nu > 0$ scales like $\rho^{1/3}$, similar to the $\nu = 0$ case proven in Section~\ref{subsec:InviscidSpeed}.

We briefly comment on the obstruction to obtaining a bound of the order $\rho^{1/3}$ on $\overline c_*(\nu,\rho)$, as we did in the inviscid case.  In both cases, the proof proceeds via the construction of a trapping region.  The key step in establishing that a set is a trapping region is to analyze the phase plane on the boundary of the trapping region.  In the inviscid case, because the system is two-dimensional, any curve defining a boundary yields an explicit relationship between $T$ and $U$.  This corresponds to establishing an inequality with only one degree of freedom.  In the viscous case, which is three-dimensional, there are two degrees of freedom, making the argument significantly more complicated.

\begin{lem}\label{lem:LargeRho3} 
	Fix $\sigma_1 > 2$. There exist $\rho_1 > 0$ such that for all $\rho \geq \rho_1$, system \eqref{ODE3D} has a heteroclinic orbit from $(1,u_0,c-u_0)$ to $(0,0,0)$ that remains in $\mathcal{C}$ for all $\xi \in \R$ and $c \geq \sigma_1\rho^{1/2}$. As a consequence,
	\[
		\limsup_{\rho\to\infty} \frac{\overline c_*(\nu,\rho)}{ \rho^{1/2}} \leq 2.
	\]
\end{lem}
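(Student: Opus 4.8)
The plan is to mirror the trapping-region construction of \Cref{lem:Het2}, but with a lower boundary whose slope is tuned to the $\rho^{1/2}$ scaling. The first thing I would note is \emph{why} the region $\mathcal{R}_1$ of \Cref{lem:Het2} fails here: its lower face $V = \tfrac{c}{2}T$ requires $\tfrac{c}{2} < c - u_0$, whereas under $c \geq \sigma_1\rho^{1/2}$ one has $c - u_0 = \sqrt{c^2+\rho^2}-\rho \to \sigma_1^2/2$ (bounded) while $\tfrac{c}{2}\to\infty$, so the equilibrium $(1,u_0,c-u_0)$ drops below that face. This forces a boundary of bounded slope, and I would work instead with
\[
	\mathcal{R} = \{(T,U,V) \in \mathcal{C} : T \leq V \leq c - u_0\},
\]
that is, the plane $V = T$ as the lower trapping face.

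Next I would verify the two ingredients of the trapping argument. First, the equilibrium lies in $\mathcal{R}$: since $c - u_0 = \sqrt{c^2+\rho^2}-\rho$ is increasing in $c$, for $c \geq \sigma_1\rho^{1/2}$ we have $c - u_0 \geq \sqrt{\sigma_1^2\rho + \rho^2}-\rho$, which tends to $\sigma_1^2/2 > 2$ because $\sigma_1 > 2$. Hence, choosing $\rho_1$ so that $\sqrt{\sigma_1^2\rho+\rho^2}-\rho > 2$ for $\rho \geq \rho_1$, we get $c - u_0 > 2 > 1$, placing $(1,u_0,c-u_0)$ strictly above $V = T$; by \Cref{prop:Equilibria2}(2) the relevant branch of the unstable manifold enters $\mathcal{C}$ and thus starts inside $\mathcal{R}$. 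Second, the trajectory cannot cross the lower face: at a putative first crossing, where $V = T$ and $T > 0$, a direct computation from~\eqref{ODE3D} gives
\[
	\frac{\drm}{\drm\xi}(V - T)\Big|_{V=T} = T(T-1) - \big(-cT + UT + T\big) = T\big[(c - U) + T - 2\big].
\]
Using $U \leq u_0$ and $T \geq 0$, this is at least $T\,[(c - u_0) - 2] > 0$, contradicting that a downward crossing has nonpositive derivative. It is worth emphasizing that $\nu$ never enters this estimate: the viscosity appears only in $\dot U$, which is used solely through the bound $0 \leq U \leq u_0$, and this is exactly why the conclusion holds for every $\nu > 0$.

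With the lower face impenetrable, the logic of \Cref{lem:Het2} applies verbatim: by \Cref{prop:Equilibria2}(4) the only escape from $\mathcal{C}$ is through $V = 0$, but within $\mathcal{R}$ the inequality $V \geq T \geq 0$ forces $T = 0$ there, landing on the invariant set $\{T = V = 0\}$ on which $U$ decays to $0$; otherwise $V > 0$ throughout. Either way $V \geq 0$ for all $\xi$, so \Cref{lem:TW_char} yields convergence to the origin, i.e.\ the desired heteroclinic orbit, for all $c \geq \sigma_1\rho^{1/2}$ and all $\rho \geq \rho_1$. Consequently $\overline c_*(\nu,\rho) \leq \sigma_1\rho^{1/2}$ for such $\rho$, giving $\limsup_{\rho\to\infty}\overline c_*(\nu,\rho)/\rho^{1/2} \leq \sigma_1$, and letting $\sigma_1 \downarrow 2$ completes the proof. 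The only genuinely delicate point is the choice of slope: the threshold condition $c - u_0 > 2$ is precisely what ties the prefactor to $2$ and the scaling to $\rho^{1/2}$. As the paper notes, a planar boundary of this type cannot be pushed down to the $\rho^{1/3}$ scaling of the inviscid case, and improving the exponent would require a curved, $U$-dependent trapping surface whose construction remains the main open obstacle.
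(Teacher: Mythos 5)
Your proof is correct, and it follows the same overall strategy as the paper's: confine the branch of the unstable manifold of $(1,u_0,c-u_0)$ entering $\mathcal{C}$ to a region of the form $\left\{(T,U,V)\in\mathcal{C} :\ mT \leq V \leq c-u_0\right\}$, check that the equilibrium lies inside, rule out crossings of the lower face using the monotonicity from \Cref{prop:Equilibria2}, and conclude via \Cref{lem:TW_char}. The substantive difference is the slope of the lower face: the paper takes $m=\sigma^2/4$ with $c=\sigma\rho^{1/2}$, while you take $m=1$. Your choice is in fact the better one, for two reasons. First, for a planar face the crossing estimate reduces to requiring $c-u_0 > m + 1/m$, which is minimized at $m=1$; this is why your crossing computation needs no expansion in $\rho$ and still reaches the optimal threshold $\sigma_1>2$ (the paper's choice $m=\sigma^2/4$ meets the same threshold, but only after Taylor expanding $c-u_0$). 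Second, your face is independent of $c$, and combined with the monotonicity of $c \mapsto c-u_0$ this makes every estimate uniform over all $c \geq \sigma_1\rho^{1/2}$ at once. The paper's proof, by contrast, works at $c=\sigma\rho^{1/2}$ with a $c$-dependent region $\mathcal{R}_3$ that degenerates at very large speeds: its equilibrium condition $\sigma^2/4 < c-u_0 = c^2/\left(\sqrt{c^2+\rho^2}+\rho\right)$ fails once $c > 2\sqrt{2}\,\rho$, so that range of speeds must implicitly be deferred to \Cref{lem:Het2}. Your version closes this small uniformity gap outright, and your diagnosis of why the region $\mathcal{R}_1$ of \Cref{lem:Het2} cannot be reused at the $\rho^{1/2}$ scaling is also accurate.
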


\begin{proof} 
	We proceed as in the proof of Lemma~\ref{lem:Het2} by constructing an appropriate trapping region that contains the component of the unstable manifold of $(1,u_0,c-u_0)$ that enters in to $\mathcal{C}$ for all $\xi\in\R$, thus showing that it converges to the origin. 

Fix $\sigma > 2$ and let $c = \sigma \rho^{1/2}$. Define the region
\be
	\mathcal{R}_3 = \left\{(T,U,V) \in \mathcal{C} : \frac{\sigma^2}{4} T \leq V \leq c - u_0\right\}.
\ee
Throughout we will fix $(T,U,V)$ to be the unstable manifold of $(1,u_0,c-u_0)$ that enters in to $\mathcal{C}$. We first show that $(T,U,V) \in \mathcal{R}$ at $\xi = -\infty$.  To this end, we need only check that $(\sigma^2/4) T(-\infty) \leq V(-\infty)$; that is $\sigma^2/4 < c- u_0$.  Notice that
\be
	c - u_0
		= \sqrt{c^2 + \rho^2} - \rho
		= \frac{c^2}{2\rho} + \mathcal{O}(c^4/\rho^3)
		= \frac{\sigma^2}{2} + \mathcal{O}(\rho^{-1}).
\ee
This last term is larger than $\sigma^2/4$ for $\rho$ sufficiently large, confirming that $(T,U,V)\in \mathcal{R}_3$ at $\xi = -\infty$.

Next, we check that $(T,U,V)$ cannot exit $\mathcal{R}_3$.  Arguing by contradiction, we may take $\xi_0$ to be the first time that $(T,U,V)$ reaches a boundary of $\mathcal{R}_3$.  As in \Cref{lem:SmallRho3}, we have that $T, U, V > 0$ at $\xi = \xi_0$.

As $V$ is clearly decreasing, see~\eqref{ODE3D}, it must be that $V(\xi_0) = (\sigma^2/4) T(\xi_0)$.  Since $V(\xi) > (\sigma^2/4) T(\xi)$ for all $\xi < \xi_0$, we conclude that, at $\xi_0$,
\be
	0 \geq \dot V - \frac{\sigma^2}{4} \dot T.
\ee
Using~\eqref{ODE3D} and recalling that $V = (\sigma^2/4) T$, we find
\be
	0
		\geq - T(1-T) - \frac{\sigma^2}{4} \left( - (c-U) T +V\right)
		= T \left(T - 1 - \left(\frac{\sigma^2}{4}\right)^2 + \frac{\sigma^2}{4}(c-U)\right).
\ee
Since $U$ is decreasing by \Cref{prop:Equilibria2}, it follows that
\be
	c- U \geq c-u_0 = \frac{\sigma^2}{2} +\mathcal{O}(\rho^{-1}).
\ee
Hence, we obtain
\be
	0 \geq T \left(T - 1 - \frac{\sigma^4}{16} + \frac{\sigma^2}{4}\frac{\sigma^2}{2} + \mathcal{O}(\rho^{-1})\right)
	= T \left(T - 1 +\frac{\sigma^4}{16} + \mathcal{O}(\rho^{-1})\right).
\ee
Recalling that $\sigma > 2$ and using the fact that $T \in [0,1]$, it follows that the rightmost expression above is strictly positive for sufficiently large $\rho > 0$. This is a contradiction, thus showing that $\mathcal{R}_3$ forms a trapping region for \eqref{ODE3D}, as desired.
\end{proof} 

We now obtain a lower bound on $\underline c_*(\nu,\rho)$ that exhibits the $\rho^{1/3}$ scaling, thus completing the proof of Theorem~\ref{thm:Viscous}.

\begin{lem}\label{lem:LargeRho4} 
	The system~\eqref{ODE3D} does not have a heteroclinic orbit from $(1,u_0, c-u_0)$ to $(0,0,0)$ that remains in $\mathcal{C}$ for all $\xi\in \R$ for any $c < \rho^{1/3}$.  As a consequence,
	\[
		\underline c_*(\nu,\rho)
			\geq \max\{2,\rho^{1/3}\}.
	\]
\end{lem}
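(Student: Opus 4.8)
The plan is to mirror the energy/integral argument of \Cref{lem:LargeRho2}, exploiting that the definition of $V$ in the viscous reduction is identical to the inviscid one. Since the lower bound $\underline c_*(\nu,\rho) \geq 2$ has already been recorded (it follows from the eigenvalue analysis of \Cref{prop:Equilibria2}.(1), which rules out nonnegative orbits for $c \in (0,2)$), I only need to produce the $\rho^{1/3}$ lower bound. First I would assume a heteroclinic orbit $(T,U,V)$ remaining in $\mathcal{C}$, so that $0 \le T \le 1$, $0 \le U \le u_0 < c$, and $V \ge 0$ with $V(-\infty) = c-u_0$, $V(\infty)=0$. Using $V \ge 0$ and $1-T \ge 0$, start from $0 \le \int (1-T)V\,\drm\xi$, substitute the relation $V = \dot T + (c-U)T$ (immediate from $V := cT + \dot T - UT$), and integrate the exact derivative $(1-T)\dot T = \tfrac{\drm}{\drm\xi}(T - \tfrac{T^2}{2})$, whose boundary contribution is $-\tfrac12$. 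This yields the same intermediate inequality as in the inviscid case, namely
\[
	\frac{1}{2} \le \int (c-U)\,T(1-T)\,\drm\xi,
\]
with the limits of integration understood to be $\pm\infty$.

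The departure from \Cref{lem:LargeRho2} comes at the evaluation of this integral. In the planar setting one could solve for $T(1-T)$ using the $\dot U$ equation and integrate $(c-U)^2\dot U$ exactly, producing the sharper prefactor $(3/2)^{1/3}$. In the three-dimensional system \eqref{ODE3D} the $U$-equation now involves $V$ and the viscosity $\nu$, so that clean identity is unavailable. Instead I would settle for the crude pointwise bound $c - U \le c$, which is valid precisely because $U \ge 0$ on $\mathcal{C}$, together with the exact evaluation of $\int T(1-T)\,\drm\xi$ afforded by the third line of \eqref{ODE3D}: since $\dot V = T(T-1) = -T(1-T)$, one gets $\int T(1-T)\,\drm\xi = V(-\infty) - V(\infty) = c - u_0$. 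Combining these gives $\tfrac12 \le c\,(c-u_0)$. Notably $\nu$ never appears, which is why the resulting bound is uniform in the viscosity and matches the inviscid scaling.

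It then remains to convert $\tfrac12 \le c(c-u_0)$ into a bound on $c$. Writing $c - u_0 = \sqrt{c^2+\rho^2} - \rho$ and using the elementary estimate $\sqrt{1+x} \le 1 + \tfrac{x}{2}$ with $x = c^2/\rho^2$, I obtain $c - u_0 \le c^2/(2\rho)$, hence $\tfrac12 \le c^3/(2\rho)$, i.e.\ $c \ge \rho^{1/3}$. This contradicts $c < \rho^{1/3}$, so no such heteroclinic orbit can exist, and combining with the already-established bound $\underline c_*(\nu,\rho) \ge 2$ gives $\underline c_*(\nu,\rho) \ge \max\{2,\rho^{1/3}\}$. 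I do not anticipate a genuine obstacle here; the only real decision is conceptual, namely recognizing that the exact integral evaluation tied to the planar structure does not transfer to the viscous system, so one trades the optimal prefactor for the crude bound $c-U\le c$, which still captures the correct $\rho^{1/3}$ order.
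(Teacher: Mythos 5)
Your proof is correct and follows essentially the same route as the paper's: the integral inequality $\tfrac12 \le \int (c-U)\,T(1-T)\,d\xi$ obtained from $V\ge 0$, the exact evaluation $\int T(1-T)\,d\xi = c-u_0$ via $\dot V = T(T-1)$, and the crude bound $c-U\le c$ in place of the inviscid case's exact computation. The only difference is cosmetic: you close with the exact elementary inequality $\sqrt{1+x}\le 1+\tfrac{x}{2}$, giving $c^3\ge\rho$ with no error terms, whereas the paper uses a Taylor expansion with $\mathcal{O}(\rho^{-5/3})$ remainders; your version is, if anything, slightly cleaner.
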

\begin{proof}
	The proof begins similarly as \Cref{lem:LargeRho3} and uses the same convention that all integrals are over $\xi \in (-\infty,\infty)$.  Indeed, as in \Cref{lem:LargeRho3}, it is possible to establish
	\be\label{e.c832}
		\frac{1}{2}
			\leq \int (c-U) T(1-T) d\xi
	\ee
	in exactly the same manner.  Unfortunately, the exact computations for the right hand side above that were used in \Cref{lem:LargeRho3} do not work here as the form of $\dot U$ is significantly more complicated.
	
	Taking $\xi \to -\infty$ in the equation for $T$ in~\eqref{ODE3D}, we find
	\[
		\int T(1-T) d\xi
			= c - u_0.
	\]
	Combining this and~\eqref{e.c832} and recalling that $c- U < c$, yields
	\be\label{e.c833}
		\frac{1}{2c}
			\leq \frac{1}{c} \int (c-U) T(1-T) d\xi
			\leq \int T(1-T) d\xi
			= c- u_0.
	\ee
	If $c \geq \rho^{1/3}$, we are finished.  Hence, we may assume that $c < \rho^{1/3}$, in which case we can use a Taylor expansion to find
	\[
		c-u_0
			= \sqrt{c^2 + \rho^2} - \rho
			= \frac{c^2}{2\rho} + \bigO\left(\rho^{-5/3}\right).
	\]
	Plugging this into~\eqref{e.c833} yields
	\[
		\frac{1}{2c}
			\leq \frac{c^2}{2\rho} + \bigO\left( \rho^{-5/3}\right).
	\]
	Rearranging this and recalling that $c< \rho^{1/3}$, we find
	\[
		\rho + \bigO\left(\rho^{-1/3}\right)
			\leq c^3,
	\]
	which concludes the proof.
\end{proof}


\section{Discussion}\label{sec:Discussion} 

In this work we have proven the existence of monotone traveling wave solutions to an FKPP-Burgers equation. We saw that the analysis of the system breaks into two distinct parts, termed the inviscid and the viscous cases, which follow from the analogous nomenclature in the Burgers equation. We began by proving that in the case of the inviscid equation there exists a minimum wave speed so that traveling wave solutions exist for all speeds above the minimum speed and none below. We further showed that these waves are monotone and satisfy the ordering \eqref{Ordering}. It was shown that there exists a critical value of $\rho > 0$ for which below this value the minimum wave speed is exactly 2, as in the FKPP equation (i.e. $u \equiv 0$), while for large values of $\rho > 0$ the speed is strictly larger than 2 and behaves like $\rho^{1/3}$ as $\rho \to \infty$. This transition from the linearly determined wave speed 2 to one that is strictly larger than 2 represents the wave moving from being `pulled' when $\rho$ is small to `pushed' (by the advection $U$) when $\rho$ is large.  

Our work in the viscous case showed that many the properties of traveling wave solutions to the inviscid equation carry over. Notably lacking in our proofs is the existence of a minimum wave speed for all relevant parameter values, but it was shown that when $\nu,\rho > 0$ are taken small a minimum wave speed exists and is exactly 2. This again shows that at least when $\nu > 0$ is small, there is a transition from pushed to pulled waves as $\rho$ is increased. Despite not having a minimum wave speed, we were able to provide bounds for all $\nu,\rho > 0$ for which monotone traveling waves exist at all speeds above one value and no such waves exist below another. Importantly, these bounds scale at least like $\rho^{1/3}$ and at most like $\rho^{1/2}$ as $\rho\to\infty$ for all $\nu >0$, thus showing the `pushed' phenomenon as in the inviscid case. Our theoretical analysis in both the inviscid and viscous cases was complemented with numerical simulations in the small $\rho$ parameter regime and novel computational bounding techniques helped us to determine the pre-factor of the asymptotic expansion in the large $\rho$ regime. Our numerical findings display significant agreement with our analysis and can be used to motivate further investigations on the FKPP-Burgers system.  

There are a number of open questions that can be addressed in a follow-up investigation. First and foremost is proving the existence of a minimum wave speed for the viscous case. Although $\overline c_*(\nu,\rho)$ and $\underline c_*(\nu,\rho)$ coincide when $\nu > 0$ is sufficiently small, it is important to determine if they are indeed equal for all $\nu,\rho > 0$. Beyond this, one would conjecture that the minimum wave speeds (if they exist) are continuous in all parameters, with the numerical evidence presented in Section~\ref{sec:Results} indicating that they are monotone in $\nu,\rho > 0$ as well. Significantly more difficult would be to determine the form of the branching behaviour of the minimum wave speed at $\rho_\nu > 0$ where the waves go from pulled to pushed. In addition, an interesting question is to determine the exact form of
\be
	\lim_{\rho\to\infty} \frac{c_*(\rho)}{\rho^{1/3}}.
\ee
We note that it is currently unknown if such a limit exists, however, it is expected that it does.  When $\nu = 0$, we expect $(3/2)^{1/3}$ while the behavior is unknown when $\nu > 0$ (recall that we cannot yet prove that $\overline c_*(\nu,\rho) = \mathcal{O}(\rho^{1/3})$ when $\nu>0$). Finally, we saw that the proofs of our main results relied heavily on the specific form of the nonlinearities on the right-hand-side of \eqref{FKPP_Burgers}, and therefore it becomes interesting to see what, if any, effect introducing different nonlinearities produces.


\appendix


\section{Bounding the Minimum Wave Speeds Numerically}

Here we briefly discuss the numerical bounding algorithm that is used to produce the values in Table~\ref{table1}. In complete generality the method relies on obtaining auxiliary functions defined by convex differential inequalities whose existence provides trapping boundaries defined by the level sets of the auxiliary function. To be explicit, suppose we are given an ODE
\be\label{appODE}
	\dot{y} = F(y)
\ee   
with $F:\R^n \to \R^n$ and some subset of phase-space $\Omega \subseteq \R^n$. We aim to find a continuously differentiable function $H:\Omega \to \R$ whose zero level set forms a trapping boundary of \eqref{appODE} in $\Omega$. Following \cite{Bramburger}, one way to obtain such a function is to determine the existence of a constant $\lambda > 0$ such that 
\be\label{AuxFn}
	\lambda F(y)\cdot \nabla H(y) \leq -H(y), \quad \forall y\in\Omega.
\ee
Indeed, the term $F(y)\cdot \nabla H(y)$ is exactly the derivative of $H$ along trajectories of \eqref{appODE}, and so we see that the region $\{y\in\Omega:\ H(y) \leq 0\}$ is forward invariant with respect to the dynamics of \eqref{appODE}. Crucially, searching for $H$ over some convex class of functions gives that the set of functions satisfying \eqref{AuxFn} is convex as well.   

In the above analysis we have used the fact that traveling waves of \eqref{FKPP_Burgers} correspond to heteroclinic orbits of a related spatial ODE. To use auxiliary functions to either confirm existence or non-existence of heteroclinic orbits requires coupling the existence of a function $H$ satisfying \eqref{AuxFn} with other constraints that are specific to the ODE in question. These conditions are detailed explicitly below as they apply to the ODEs \eqref{Planar} and \eqref{ODE3D}. In general, providing existence of a heteroclinic orbit using \eqref{AuxFn} would require that the target and source equilibria of the desired heteroclinic orbit both satisfy $H(y) \leq 0$, while the boundaries of $\Omega$ for which the unstable manifold of the source equilibrium could escape $\Omega$ have $V(y) > 0$. Proving non-existence of a heteroclinic orbit is slightly simpler since we could have the source equilibrium lying in the interior of the forward invariant region defined by $H(y) \leq 0$, while the target lies in the complement of this set.  

In the present investigation the wave speed $c > 0$ functions as an external parameter in the ODEs in which we wish to confirm the existence or non-existence of heteroclinic orbits. Given the existence of a minimum wave speed, one approach would be to find the extremal value of $c$ for which an auxiliary function $H$ can be obtained to confirm the existence or non-existence a desired connection in phase-space. Unfortunately, this leads to a non-convex optimization problem (see \cite[Section~2c]{Bramburger}) and so, to implement the process numerically, we perform computations at multiple fixed values of $c$. For example, to find the smallest value of $c$ for which a heteroclinic connection exists, we perform the following iterative procedure. We begin with a sufficiently large value of $c$ for which a heteroclinic orbit exists and another sufficiently small value for which it does not. We can then repeatedly bisect in $c$, attempting to find a (potentially) different auxiliary function $H$ that confirms the existence of a heteroclinic orbit in $\Omega$ at each new value of $c$. The smallest such $c$ for which this can be performed then becomes an upper bound on the minimum wave speed. Confirming non-existence is similarly performed through such a bisection method in the wave speed $c$. All other parameters in the system (i.e. $\nu,\rho > 0$) are held fixed.       

We refer the reader to~\cite{Bramburger} for a more complete discussion of the above method of bounding the wave speed and how to implement this procedure numerically. Briefly, after determining the inequalities that a desired auxiliary function $H$ must satisfy to confirm existence or non-existence of a heteroclinic orbit, we search for $H$ numerically as a degree $d \geq 1$ polynomial in $y$ with tunable real-valued coefficients. These convex inequalities for $H$ in the space $\R[y]_d$, the set of all polynomials with real-valued coefficients and degree $\leq d$, defines a semidefinite program that can be relaxed to a series of sum-of-squares constraints that are numerically tractable. The numerical implementation may either find admissible values for the coefficients of $H$ or return that no such values exist. The results in Table~\ref{table1} were obtained using the MATLAB software YALMIP (version R20190425) \cite{Lofberg2004, Lofberg2009} to translate the sum-of-squares constraints into semidefinite programs which are then solved using Mosek (version 9.0) \cite{Mosek}. The code to reproduce the values in Table~\ref{table1} is available at the repository {\bf GitHub/jbramburger/FKPP-Burgers}. Computations are performed by optimizing over $\lambda > 0$ at each fixed degree $d \geq 1$ of $H$. The degree is successively increased until convergence in the bound is observed, typically around $d = 10$ to $d = 14$.

We conclude this section with a brief discussion of the specific conditions put on $H$ to bound the wave speed from above and below in the cases $\nu = 0$ and $\nu \neq 0$, respectively.

\underline{\bf Upper bounds on $c_*(\rho)$:} From the work of Section~\ref{sec:Inviscid}, our ODE of interest is the planar system \eqref{Planar} and the region of interest is $\Omega = \mathcal{B}$, defined in \eqref{BoxDef}, for each fixed $\rho > 0$ and $\nu = 0$. From Proposition~\ref{prop:Equilibria} we have that the unstable manifold of $(1,u_0)$ can only leave $\mathcal{B}$ by crossing $U = 0$, and so we impose the conditions
\be
	\begin{split}
		-\lambda\bigg[(c-U)\bigg(-cT + UT + \frac{U}{2\rho}(2c - U)\bigg)H_T(T,U) + \rho T(T-1)H_U(T,U)\bigg] &\geq (c- U)H(T,U), \\
		H(T,0) &\geq \varepsilon T(1-T), \\
		-\varepsilon &\geq H(1,u_0) \\
		H(0,0) &= 0,
	\end{split}
\ee 
for all $(T,U)\in\mathcal{B}$, where subscripts denote partial differentiation. Indeed, the first condition is \eqref{AuxFn} for the specific ODE \eqref{Planar}, multiplied through by the positive term $c-U$ to maintain that the inequality is entirely stated in terms of polynomial functions when $V$ is polynomial. For any $\varepsilon > 0$, the second condition guarantees that the set $\{(T,0):\ 0 < T < 1\}\subset\mathcal{B}$ is such $H(T,0) > 0$ for all $0 < T < 1$, thus confirming that the unstable manifold of $(1,u_0)$ cannot cross $U=0$. The results in Table~\ref{table1} always use $\varepsilon = 10^{-4}$. We comment that the presence of $\varepsilon > 0$ comes from the fact that strict inequalities cannot be handled numerically. The third condition gives that the equilibrium $(1,u_0)$ lies in the interior of the forward invariant region, thus confirming that its unstable manifold in $\mathcal{B}$ also lies in this set. Finally, the fourth condition gives that the origin lies on the boundary of the forward invariant region. Finally, we note that we cannot have $H(0,0) \leq -\varepsilon$ since this would be inconsistent with the second condition, thus leading to the requirement that $H(0,0) = 0$, as stated above.    
\\

\underline{\bf Lower bounds on $c_*(\rho)$:} Again we take $\Omega = \mathcal{B}$ and consider the ODE \eqref{Planar}. The conditions to confirm non-existence of a heteroclinic orbit for each fixed $\rho > 0$ are then
\be
	\begin{split}
		-\lambda\bigg[(c-U)\bigg(-cT + UT + \frac{U}{2\rho}(2c - U)\bigg)H_T(T,U) + \rho T(T-1)H_U(T,U)\bigg] &\geq -(c- U)H(T,U), \\
		-H(T,0) &\geq \varepsilon U(1-U), \\
		H(1,u_0) &= 0, \\
		H(0,0) &\geq \varepsilon,
	\end{split}
\ee 
for all $(T,U)\in\mathcal{B}$. The first condition is a variant of \eqref{AuxFn}, in this case of the form
\be\label{AuxFn2}
	\lambda F(y)\cdot \nabla H(y) \leq H(y),	
\ee
which again gives that $H \leq 0$ is forward invariant with respect to the dynamics of $F$. The reason for this change in conditions is to maintain that the third and fourth conditions, which separate the target and source equilibria, are consistent with the condition guaranteeing forward invariance of $H \leq 0$. Indeed, using \eqref{AuxFn} in the place of the first condition above would require that $H(0,0) = 0$, thus not necessarily separating the equilibria and in turn not necessarily prove non-existence of a heteroclinic orbit since both equilibria would belong to the closed set $H \leq 0$. The second condition gives that the line $\{(1,U\: 0 < U < 1\}\subset\mathcal{B}$ lies in the forward invariant region, and since $(1,u_0)$ is a saddle, a local analysis near this equilibrium reveals that its unstable manifold in $\mathcal{B}$ must enter into the forward invariant set $H \leq 0$. As in the upper bounds on $c_0(\rho)$, the second condition guarantees that $H(1,u_0)$ cannot be taken to be negative, and therefore we can only have $H(1,u_0) = 0$, in turn necessitating the alternative first condition.  
\\

\underline{\bf Upper bounds on $\overline c_*(\nu,\rho)$:} Here now we consider the ODE \eqref{ODE3D} with $\Omega = \mathcal{C}$ for $\nu,\rho > 0$ fixed. From Proposition~\ref{prop:Equilibria2} we have that the unstable manifold of $(1,u_0,c-u_0)$ that enters into $\mathcal{C}$ can only leave by crossing $V = 0$. We then seek $H:\R^3\to\R$ satisfying 
\be
	\begin{split}
		-\lambda\bigg[(-cT + UT + V)H_T(T,U,V) + &\frac{1}{\nu}\bigg(-cU + \frac{1}{2}U^2 + \rho V\bigg)H_U(T,U,V) + T(T-1)H_V(T,U,V)\bigg] \geq H(T,U,V), \\
		H(T,U,0) &\geq \varepsilon T(1-T) + \varepsilon U(u_0 - U), \\
		-\varepsilon &\geq H(1,u_0,c-u_0), \\
		H(0,0,0) &= 0,
	\end{split}
\ee 
for all $(T,U,V)\in\mathcal{C}$. The first condition is exactly \eqref{AuxFn} with $F$ specified by the right-hand-side of \eqref{ODE3D}. The remaining conditions are analogous to those for the upper bounds on $c_0(\rho)$.
\\

\underline{\bf Lower bounds on $\underline c_*(\nu,\rho)$:} Again we consider the dynamics \eqref{ODE3D} with $\Omega = \mathcal{C}$ for fixed $\nu,\rho > 0$. To determine non-existence of a heteroclinic connection from $(1,u_0,c-u_0)$ that remains in $\mathcal{C}$ we seek a function $H:\R^3\to\R$ satisfying
\be
	\begin{split}
		-\lambda\bigg[(-cT + UT + V)H_T(T,U,V) + &\frac{1}{\nu}\bigg(-cU + \frac{1}{2}U^2 + \rho V\bigg)H_U(T,U,V) + T(T-1)H_V(T,U,V)\bigg] \geq H(T,U,V), \\
		-\varepsilon &\geq H(1,u_0,c-u_0), \\
		H(0,0,0) &= 0,
	\end{split}
\ee 
for all $(T,U,V)\in\mathcal{C}$, along with the condition
\be\label{AuxFn3}
	H(T,U,V) \geq 0, \quad \forall T,U,V\in [0,\delta].
\ee 
The first condition above represents \eqref{AuxFn}, while the following two conditions put the source equilibrium $(1,u_0,c-u_0)$ in the interior of the forward invariant region and the origin on the boundary. For some sufficiently small $\delta > 0$, the condition \eqref{AuxFn3} works to guarantee that a region around the origin lies outside of the forward invariant region $H \leq 0$ when $H$ is a polynomial in $(T,U,V)$. In our numerical implementations we take $\delta = 0.05$; larger values of $\delta$ lead to less precise bounds, while smaller values show little change in the lower bound.


\bibliographystyle{siam}
\bibliography{FKPP_Burgers}

\end{document}